\theoremstyle{definition}
\theoremstyle{plain}
\date{}
\newtheorem{Thm}{Theorem}[section]
\newtheorem{Lemma}[Thm]{Lemma}
\newtheorem{Cor}[Thm]{Corollary}
\newtheorem{Def}[Thm]{Definition}
\newcommand{\dis}{\displaystyle}
\newcommand{\norm}{\parallel}
\newcommand{\Z}{{\mathbb Z}}
\newcommand{\T}{{\mathbb T}}
\newcommand{\N}{{\mathbb N}}
\newcommand{\R}{{\mathbb R}}
\newcommand{\D}{ \mathcal{D}}
\newcommand{\ep}{\varepsilon }
\newcommand{\2}{\frac{1}{2} }
\newcommand{\wto}{\rightharpoonup}
\newcommand{\tu}{\tilde{u}}
\newcommand{\Omegain}{{\Omega_h\setminus\partial\Omega_h}}
\newcommand{\nnorm}{|\!|\!|}
\def\text#1{\mbox{#1 }}
\title{\bf More on convergence of \\
Chorin's projection method for incompressible Navier-Stokes equations
}
\author{Masataka Maeda 
\footnote{Department of Mathematics, Faculty of Science and Technology, Keio University, 3-14-1 Hiyoshi, Kohoku-ku, Yokohama, 223-8522, Japan. E-mail:  masataka-918.m@keio.jp  (This work was done when he belonged to Keio University.)}\,\,\,
and Kohei Soga
\footnote{Department of Mathematics, Faculty of Science and Technology, Keio University, 3-14-1 Hiyoshi, Kohoku-ku, Yokohama, 223-8522, Japan. E-mail:  
 soga@math.keio.ac.jp  }}
\begin{document}
\maketitle
\begin{abstract} 
\noindent Kuroki and Soga [Numer. Math. 2020] proved that a version of Chorin's fully discrete projection method, originally introduced by  A. J. Chorin [Math. Comp. 1969],  is unconditionally  solvable  and convergent  within an arbitrary fixed time interval to a Leray-Hopf weak solution of the incompressible Navier-Stokes equations on a bounded domain with an arbitrary external force. This paper is a continuation of Kuroki-Soga's work. We show time-global solvability and convergence of our scheme; $L^2$-error estimates for the scheme in the class of  smooth exact solutions; application of the scheme to the problem with a time-periodic external force to investigate time-periodic (Leray-Hopf weak) solutions, long-time behaviors, error estimates, etc.

\medskip

\noindent{\bf Keywords:} fully discrete projection method; incompressible Navier-Stokes equations; Leray-Hopf weak solution; time-periodic solution; error estimate  \medskip

\noindent{\bf AMS subject classifications:}  35Q30; 35D30; 65M06; 65M15
\end{abstract}
%
\setcounter{section}{0}
\setcounter{equation}{0}
\section{Introduction}

We consider the incompressible Navier-Stokes equations on a bounded domain of $\R^3$
\begin{eqnarray}\label{NS}
 \left \{
\begin{array}{lll}
\,\,\,\,\, v_t&=& -(v\cdot \nabla)v +\Delta v+f -\nabla p\mbox{\quad in $(0,T]\times\Omega$  or $(0,\infty)\times\Omega$,}
\medskip\\
\nabla\cdot v &=&0\mbox{\quad\quad\quad\quad\quad\quad\quad\quad\quad\quad\quad\,\,\, in $(0,T]\times\Omega$ or $(0,\infty)\times\Omega$,}
\medskip\\
v(0,\cdot)&=&v^0\mbox{\qquad\qquad\qquad\qquad\qquad\,\,\,\,\,\,\, in $\Omega$},
\medskip\\
\,\,\,\,\,v&=&0\mbox{\qquad\qquad\qquad\qquad\qquad\,\,\,\,\,\,\,\,\, on $\partial \Omega$},
\end{array}
\right.\\\nonumber  
\,\,\,\,\,\,\,\, \Omega\subset \R^3 \mbox{ is a bounded connected open set with a Lipschitz boundary,}
\end{eqnarray}
where $v=v(t,x)$ is the velocity, $p=p(t,x)$ is the pressure, $f=f(t,x)$ is a given external force, $T$ is an arbitrary positive number, $v^0$ is initial data and $v_t=\partial_t v$, $v_{x_j}=\partial_{x_j}v$, etc., stand for  the partial (weak) derivatives of $v(t,x)$. 
Let $f$ and $v^0$ be arbitrarily taken as  
$$\mbox{$f\in L^2_{\rm loc}([0,\infty);L^2(\Omega)^3)$,\quad $v^0\in L^2_{\sigma}(\Omega)$}.$$ 
Here, $C^r_{0}(\Omega)$  is the family of $C^r$-functions\,:\,$\Omega\to\R$ with a compact support; $C^r_{0,\sigma}(\Omega):=\{v\in C^r_0(\Omega)^3\,|\,\nabla\cdot v=0\}$;  $H^1_0(\Omega)$ is the closure of $C^\infty_0(\Omega)$ with respect to the norm  $\norm \cdot\norm_{H^1(\Omega)}$; $L^2_{\sigma}(\Omega)$ (resp. $H^1_{0,\sigma}(\Omega)$) is the closure of $C^\infty_{0,\sigma}(\Omega)$ with respect to the norm $\norm\cdot\norm_{L^2(\Omega)^3}$ (resp. $\norm \cdot\norm_{H^1(\Omega)^3}$). 

A function $v=(v_1,v_2,v_3):[0,T]\times\Omega\to\R^3$ is called a {\it Leray-Hopf weak solution} of \eqref{NS}, if 
\begin{eqnarray}\nonumber
&&v\in L^\infty([0,T];L^2_{\sigma}(\Omega))\cap L^2([0,T];H^1_{0,\sigma}(\Omega)),\\\label{weak-form-NS}
&&-\int_\Omega v^0(x)\cdot\phi(0,x)dx- \int_0^T\int_\Omega v(t,x)\cdot \partial_t\phi(x,t)dxdt \\\nonumber
&&=-\sum_{j=1}^3\int_0^T\int_\Omega v_j(t,x)\partial_{x_j}v(t,x)\cdot\phi(t,x)dxdt\\\nonumber
&&\quad -\sum_{j=1}^3\int_0^T\int_\Omega\partial_{x_j}v(t,x)\cdot\partial_{x_j}\phi(t,x)dxdt\\\nonumber
&&\quad +\int_0^T\int_\Omega f(t,x)\cdot\phi(t,x)dxdt\quad \mbox{ for all $\phi\in C^\infty_0((-1,T);C^\infty_{0,\sigma}(\Omega))$,}
\end{eqnarray} 
where $x\cdot y:=\sum_{i=1}^3x_iy_i$ for $x,y\in\R^3$. 

A function $v$ belonging to  $L^\infty([0,\infty);L^2_\sigma(\Omega))\cap L^2_{\rm loc}([0,\infty);H^1_{0,\sigma}(\Omega))$ is  called a {\it time global Leray-Hopf weak solution} of \eqref{NS}, if  $v|_{[0,T]}$ satisfies (\ref{weak-form-NS})  for each fixed $T>0$.  

This paper is a continuation of the work \cite{Kuroki-Soga}. In \cite{Kuroki-Soga}, Kuroki-Soga proposed a version of Chorin's fully discrete projection method applied to \eqref{NS} and proved its convergence within an arbitrarily fixed time interval   to a Leray-Hopf weak solution (up to a subsequence) by means of a new compactness argument (the standard Aubin-Lions-Simon approach fails).    
It seems that Chorin's fully discrete projection method is no longer very popular in modern computational fluid dynamics because of its less accuracy, i.e., discretization of $\Omega$ into a uniform mesh and the Dirichlet boundary condition cause a less accurate result. However, we believe that {\it Chorin's fully discrete projection method can be one of strong mathematical tools to analyze the Navier-Stokes equations including complicated issues such as free boundary problems, long time behaviors, time-periodic solutions, bifurcations, etc.} Unlike Galerkin type methods, the projection method solves the equations more directly, which could be an advantage for better understandings. Motivated by such an opinion, we further develop mathematical analysis of Chorin's fully discrete projection method beyond the convergence to a Leray-Hopf weak solution of the initial boundary value problem. 

In Section 2, we first formulate  a version of Chorin's fully discrete projection method and recall the results in \cite{Kuroki-Soga}. Note that \cite{Kuroki-Soga} deals with the one-sided difference and the discrete Helmholtz-Hodge decomposition formulated by the zero Dirichlet boundary condition for both of the divergence-free part and  potential part. Here, we deal with the central difference and the discrete Helmholtz-Hodge decomposition formulated by the zero Dirichlet boundary condition for   the divergence-free part and the zero mean condition for the potential part. This modification in the discrete Helmholtz-Hodge decomposition is particularly important to obtain error estimates, since the exact pressure term $p$ does not necessarily satisfy  the zero Dirichlet boundary condition. The new result of Section 2 is the time-global solvability of our discrete problem with a fixed discretization parameter, under the assumption that the $L^2$-norm of  the external force within $[t,t+1]\times\Omega$ is uniformly bounded for any $t\ge0$. This result yields a sequence of step functions that is  convergent locally in time to a time-global Leray-Hopf weak solution.           

In Section 3, we demonstrate an error estimate for our scheme in the $C^3$-class. In \cite{Chorin}, Chorin showed an $L^2$-error estimate of $O(h^2)$ in the $C^5$-class for problems with the periodic boundary conditions, where $h>0$ and $\tau>0$ are the mesh size for the space variables and time variable, respectively. In the case of the zero Dirichlet boundary condition, the issue is more complicated due to the gap between the exact boundary $\partial\Omega$ and the boundary of the grid space. Semi-discrete projection methods, i.e., discrete in time with the mesh size $\tau>0$ and continuous in space, are free from  this complication and one can do a lot also in the class of strong solutions. In fact, Rannacher \cite{Rannacher} gave an error estimate of $O(\tau)$ for the Dirichlet problem. Since Chorin took the diffusive scaling condition $\tau=O(h^2)$ in his fully discrete setting, the two results by Chorin and  Rannacher seem to be ``consistent''. We also refer to Shen \cite{Shen} and the references therein for further investigation on semi-discrete projection methods. Although a fully discrete projection method applied to the  Dirichlet problem is said to be less accurate,  to the best of the authors' knowledge, there is no rigorous error analysis. We will show an $L^2$-error  estimate of $O(h^{\frac{1}{4}})$ for a discrete solution and exact $C^3$-solution under the scaling condition $\tau=O(h^{\frac{3}{4}})$. Note that Chorin \cite{Chorin} and Temam \cite{Temam-2} proved convergence of their schemes  with the standard diffusive scaling condition, while Kuroki-Soga \cite{Kuroki-Soga} gave scale-free results; The diffusive scaling does not yield such an error estimate in our formulation.  
We will see that our error bound and scaling condition arise from the discrete Helmholtz-Hodge decomposition, not from the discrete Navier-Stokes equations. 
Although the error estimates of $O(h^\frac{1}{4})$ does not sound very sharp, the proof provides a new idea to estimate a remainder term on the boundary arising from ``summation by parts'' in the discrete problem, which is reminiscent of the construction of the trace operator. This idea would provide further applications in analysis of finite difference methods.     

In Section 4, we apply our scheme to investigation of the problem with a time-periodic external force. In this problem, one of the main issues is to find a time-periodic solution with the same time-period as that of the external force. After the first attempt by Serrin \cite{Serrin}, many results have been obtained. We refer to Kyed \cite{Kyed} for a nice review of the literature on time-periodic solutions to the Navier-Stokes equations. To the best of the authors' knowledge, there is no mathematical investigation of time-periodic solutions in terms of the fully discrete projection method. We find a discrete time-periodic solution as a fixed point of the time-$1$ map of the discrete Navier-Stokes equations and prove convergence to a time-periodic Leray-Hopf weak solution. We also investigate long-time behaviors of  discrete solutions, assuming that there exists a ``small'' discrete solution in the $L^\infty$-sense. We obtain exponential contraction of any other discrete solutions.  Since the rate of contraction is independent of the size of the discretization parameters, we see that similar exponential contraction holds for exact Leray-Hopf weak solutions, where we do not assume any regularity except for the $L^\infty$-bound of a solution. This idea would provide further applications in analysis of stability of a time-periodic solution, its bifurcation, etc. of the exact problem through the discrete problem. 
Furthermore, we prove that any discrete solution falls into the $O(h^{\frac{1}{4}})$-neighborhood of an exact time-periodic solution, provided the exact solution is of the $C^3$-class and ``small''. These results can be seen as a version of the results by Serrin \cite{Serrin}, Miyakawa-Teramoto \cite{MT} and Teramoto \cite{Teramoto}. We refer also to Jauslin-Kreiss-Moser \cite{JKM} and Nishida-Soga \cite{Nishida-Soga} for similar investigations on time-periodic entropy solutions of forced Burgers equations through finite difference methods. Finally, we point out Kagei-Nishida-Teramoto \cite{Kagei-Nishida-Teramoto} for analysis on stability of stationary solutions of the incompressible Navier-Stokes equations via the corresponding artificial compressible system.       

In Section 5, we briefly state results corresponding to Section 3 and 4 in the case of the periodic boundary conditions, where the diffusive scaling  and the central difference play an essential role. Since the $L^2$-estimates can be sharpened to be $O(h^2)$, we obtain an $L^\infty$-estimate of $O(\sqrt{h})$ through the inequality used by Chorin \cite{Chorin}.       

\setcounter{section}{1}
\setcounter{equation}{0}
\section{Construction of Leray-Hopf weak solution}
We investigate a version of the scheme studied in \cite{Kuroki-Soga} with the central difference, as well as the discrete Helmholtz-Hodge decomposition with the zero Dirichlet boundary condition for the divergence-free part and the zero mean condition for the potential part. 
\subsection{Calculus on grid}
Let $h>0$ be the mesh size for the space variables and consider the grid 
$$h\Z^3:=\{ (hz_1,hz_2,hz_3)\,|\,z_1,z_2,z_3\in\Z \}.$$
Let $e^1,e^2,e^3$ be the standard basis of $\R^3$. For $B\subset h\Z^3$, the boundary $\partial B$ of $B$ is defined as $\partial B:=\{x \in B\,|\,\{x \pm he^i \}_{i=1,2,3}\not\subset B \}$.
Let $\Omega$ be a bounded connected open subset of $\R^3$ with a Lipschitz boundary $\partial\Omega$. For $x\in\R^3$ and $r>0$, set 
\begin{eqnarray*}
C_r(x)&:=&\Big[x_1-\frac{r}{2},x_1+\frac{r}{2}\Big]\times\Big[x_2-\frac{r}{2},x_2+\frac{r}{2}\Big]\times\Big[x_3-\frac{r}{2},x_3+\frac{r}{2}\Big],\\
\Omega_h&:=&\{x\in\Omega\cap h\Z^3\,|\,C_{4h}(x)\subset\Omega\}.
\end{eqnarray*}
Define the discrete derivatives of a function $\phi:\Omega_h\to\R$ as follows:  For each $x\in \Omega_h$,  
\begin{eqnarray*}
&&D_i^+\phi(x):=\frac{\phi(x+he^i)-\phi(x)}{h},\,\,\,D_i^-\phi(x):=\frac{\phi(x)-\phi(x-he^i)}{h},\\
&&D_i\phi(x):=\frac{\phi(x+he^i)-\phi(x-he^i)}{2h},\\
&&D_i^2\phi(x):=\frac{\phi(x+he^i)+\phi(x-he^i)-2\phi(x)}{h^2}
\end{eqnarray*}  
where  {\it these operations work under the condition that $\phi$ is extended to be $0$ outside $\Omega_h$, i.e., $\phi(x+ he^i)=0$ (resp. $\phi(x- he^i)=0$) if $x+ he^i\not\in \Omega_h$ (resp. $x- he^i\not\in \Omega_h$).} For $x,y\in\R^d$, set $x\cdot y:=\sum_{i=1}^dx_iy_i$, $|x|:=\sqrt{x\cdot x}$. Define the discrete gradient of a function $\phi:\Omega_h\to\R$ and the discrete divergence of a function $w=(w_1,w_2,w_3):\Omega_h\to\R^3$ as 
$$\D\phi(x):=(D_1\phi(x),D_2\phi(x),D_3\phi(x)),\,\,\,\D\cdot w(x):=D_1w_1(x)+D_2w_2(x)+D_3w_3(x).$$
We often use discrete versions of integration by parts, ``summation by parts'', where we need careful treatments of reminder terms on the boundary. For this purpose, we introduce the following notation (see with Figure 1):   
\begin{eqnarray*}
\Gamma_h^{i+}&:=&\{x\in\partial\Omega_h\,|\,x-he^i\in\Omega_h\setminus\partial\Omega_h\},\\
\tilde{\Gamma}_h^{i+}&:=& \{x\in\partial\Omega_h\,|\,x+he^i\not\in\Omega_h\}    ,\\
\Gamma_h^{i-}&:=&  \{x\in\partial\Omega_h\,|\,x+he^i\in\Omega_h\setminus\partial\Omega_h\}   ,\\
\tilde{\Gamma}_h^{i-}&:=&  \{x\in\partial\Omega_h\,|\,x-he^i\not\in\Omega_h\},
\end{eqnarray*}
e.g.,  if there is a sequence of points of $\Omega_h$ on a line parallel to $e^i$ as shown in Figure 1, we have  $x^+\in\Gamma_h^{i+}$, $x^+\not\in\tilde{\Gamma}_h^{i+}$,  $\tilde{x}^+\not\in\Gamma_h^{i+}$,  $\tilde{x}^+\in\tilde{\Gamma}_h^{i+}$, 
$x^-\in\Gamma_h^{i-}$, $x^-\not\in\tilde{\Gamma}_h^{i-}$,  $\tilde{x}^-\not\in\Gamma_h^{i-}$,  $\tilde{x}^-\in\tilde{\Gamma}_h^{i-}$. 
\begin{center}
\includegraphics[bb=0 0 820 151, width=10cm]{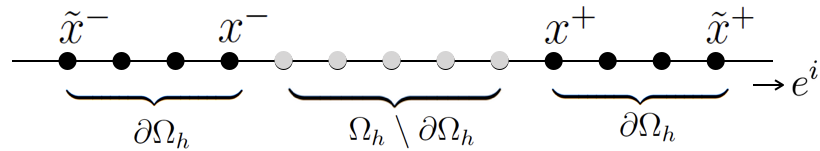}
\center{Figure 1.}
 \end{center}

\begin{Lemma}\label{vector-cal1}
For functions $\phi:\Omega_h\to\R$ and $u:\Omega_h\to\R^3$, we have 
$$ \sum_{x\in \Omega_h}u(x) \cdot \D\phi(x)=-\sum_{x\in \Omega_h}(\D\cdot u(x) ) \phi(x).$$
\end{Lemma}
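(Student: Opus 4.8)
The plan is to reduce the identity to an elementary shift of the summation index on the full lattice $h\Z^3$, exploiting the standing convention that $\phi$ and $u$ are extended by $0$ outside $\Omega_h$. Write $\tilde\phi:h\Z^3\to\R$ and $\tilde u=(\tilde u_1,\tilde u_2,\tilde u_3):h\Z^3\to\R^3$ for these zero-extensions; since $\Omega_h$ is finite, both have finite support. Expanding the discrete gradient componentwise, it suffices to prove, for each fixed $i\in\{1,2,3\}$,
$$\sum_{x\in\Omega_h}u_i(x)D_i\phi(x)=-\sum_{x\in\Omega_h}(D_iu_i(x))\phi(x),$$
and then sum over $i=1,2,3$.

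First I would rewrite the left-hand side as a sum over the whole grid. For $x\in\Omega_h$ one has $D_i\phi(x)=\frac{\tilde\phi(x+he^i)-\tilde\phi(x-he^i)}{2h}$ by the extension convention, while for $x\notin\Omega_h$ the factor $u_i(x)$ is to be read as $\tilde u_i(x)=0$; hence
$$\sum_{x\in\Omega_h}u_i(x)D_i\phi(x)=\frac{1}{2h}\sum_{x\in h\Z^3}\tilde u_i(x)\bigl(\tilde\phi(x+he^i)-\tilde\phi(x-he^i)\bigr).$$
Next, since $y\mapsto y\pm he^i$ is a bijection of $h\Z^3$ and all sums are finite, I would shift indices in each term: $\sum_{x}\tilde u_i(x)\tilde\phi(x+he^i)=\sum_{x}\tilde u_i(x-he^i)\tilde\phi(x)$ and $\sum_{x}\tilde u_i(x)\tilde\phi(x-he^i)=\sum_{x}\tilde u_i(x+he^i)\tilde\phi(x)$. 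Subtracting gives
$$\sum_{x\in\Omega_h}u_i(x)D_i\phi(x)=-\sum_{x\in h\Z^3}\frac{\tilde u_i(x+he^i)-\tilde u_i(x-he^i)}{2h}\,\tilde\phi(x).$$
Finally, in this last sum only $x\in\Omega_h$ contribute (because $\tilde\phi(x)=0$ otherwise), and for such $x$ the coefficient equals $D_iu_i(x)$, again by the extension convention; this yields the claimed identity for each $i$, and summation over $i$ completes the proof.

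The step that deserves attention — more a point of bookkeeping than a genuine obstacle — is that the extension convention must be applied consistently to \emph{both} $\phi$ and $u$, so that the contributions at and near $\partial\Omega_h$ cancel. This is precisely where the central difference is convenient: the translation $x\mapsto x\pm he^i$ is a clean bijection of $h\Z^3$ with no leftover endpoint term, so no boundary remainder survives. (By contrast, the one-sided operators $D_i^\pm$ produce genuine boundary terms under summation by parts, which will have to be tracked explicitly in later sections; the supports $\Gamma_h^{i\pm}$, $\tilde\Gamma_h^{i\pm}$ introduced above are exactly the book­keeping devices for that.)
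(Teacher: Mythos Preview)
Your proof is correct and is essentially the same argument as the paper's: both perform the index shift $x\mapsto x\pm he^i$ and use the zero-extension convention to kill the boundary contributions. The only cosmetic difference is that you pass to the full lattice $h\Z^3$ at the outset so that no boundary sums ever appear, whereas the paper stays on $\Omega_h$, writes the remainder terms on $\tilde\Gamma_h^{i\pm}$ explicitly, and then observes they vanish by the same extension convention.
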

\begin{proof}
We may carry out $\sum_{x\in \Omega_h}$ by the summation along sequences of grid points of $\Omega_h$ on each line parallel to $e^i$ ($i=1,2$ or $3$). By shifting $x$ to $x\pm he^i$ in the summation, we obtain    
\begin{eqnarray*}
&&\sum_{x\in \Omega_h} u(x)\cdot \D\phi(x)
=\sum_{i=1}^3\Big(\sum_{x\in \Omega_h} u_i(x)\phi(x+he^i)-\sum_{x\in\Omega_h}u_i(x)\phi(x-he^i)\Big)\frac{1}{2h}\\
&&= -\sum_{x\in \Omega_h} \D\cdot u(x)\phi(x)+ 
\sum_{i=1}^3\Big(\sum_{x\in \tilde{\Gamma}_h^{i+}}u_i(x)\phi(x+he^i)\\ \nonumber
&&\quad -\sum_{x\in \tilde{\Gamma}_h^{i-}}u_i(x-he^i)\phi(x)
-\sum_{x\in \tilde{\Gamma}_h^{i-}}u_i(x)\phi(x-he^i)
+\sum_{x\in \tilde{\Gamma}_h^{i+}}u_i(x+he^i)\phi(x)
\Big)\frac{1}{2h}.
\end{eqnarray*}
Since $\phi$ and $u$ are supposed to be $0$ outside $\Omega_h$, we obtain the assertion. 
\end{proof}
We give two Poincar\'e type inequalities for functions on the grid. 
\begin{Lemma}[{\bf Poincar\'e type inequality I}]\label{Poincare}
For each function $\phi:\Omega_h\to\R$ with $\phi|_{\partial \Omega_h}=0$, we have 
\begin{eqnarray*}
\sum_{x\in\Omega_h} |\phi(x)|^2\le A^2 \sum_{x\in\Omega_h\setminus\partial\Omega_h}  |D_i^+ \phi(x)|^2,\quad i=1,2,3,
\end{eqnarray*}
where $A>0$ is a constant depending only on the diameter of $\Omega$. 
\end{Lemma}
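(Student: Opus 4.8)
The plan is to reduce the estimate to a one‑dimensional telescoping argument along the grid lines parallel to $e^i$, together with the discrete Cauchy--Schwarz inequality. Fix $i\in\{1,2,3\}$ and write $\sum_{x\in\Omega_h}$ as the sum, over the grid lines $\ell$ parallel to $e^i$, of the sum over $\Omega_h\cap\ell$; since $\phi|_{\partial\Omega_h}=0$ we have $\sum_{x\in\Omega_h}|\phi(x)|^2=\sum_{x\in\Omega_h\setminus\partial\Omega_h}|\phi(x)|^2$, so only interior grid points contribute. On a fixed line $\ell$, list the consecutive grid points as $\dots,x_m,x_{m+1},\dots$ with $x_{m+1}=x_m+he^i$, and decompose the interior grid points of $\ell$ into maximal blocks $[x_a,\dots,x_b]$ of consecutive points all lying in $\Omega_h\setminus\partial\Omega_h$.

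The step I expect to be the main obstacle is the choice of the telescoping anchor, which is exactly what makes the restriction to interior points on the right‑hand side legitimate. The naive move --- telescoping an interior $x_k$ back to the nearest zero of $\phi$, a point of $\partial\Omega_h$ or outside $\Omega_h$ --- produces terms $D_i^+\phi$ evaluated at points of $\partial\Omega_h$, which are not permitted in the asserted sum. Instead one should telescope toward $x_{b+1}$: since $x_b\in\Omega_h\setminus\partial\Omega_h$, all six neighbours of $x_b$ lie in $\Omega_h$, hence $x_{b+1}\in\Omega_h$, and by maximality of the block $x_{b+1}\notin\Omega_h\setminus\partial\Omega_h$, so $x_{b+1}\in\partial\Omega_h$ and $\phi(x_{b+1})=0$. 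Then for $a\le k\le b$,
\[
\phi(x_k)=\phi(x_k)-\phi(x_{b+1})=-\sum_{j=k}^{b}\bigl(\phi(x_{j+1})-\phi(x_j)\bigr)=-h\sum_{j=k}^{b}D_i^+\phi(x_j),
\]
and crucially every index $j\in\{k,\dots,b\}$ satisfies $x_j\in\Omega_h\setminus\partial\Omega_h$, so the representation uses only forward differences that are allowed on the right‑hand side. (Anchoring symmetrically at $x_{a-1}\in\partial\Omega_h$ would force a $D_i^-$ term at a boundary point, which is why the right‑hand anchor is the correct one.)

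From here the argument is routine. The discrete Cauchy--Schwarz inequality gives
\[
|\phi(x_k)|^2\le h^2\,(b-k+1)\sum_{j=k}^{b}|D_i^+\phi(x_j)|^2\le\bigl(\mathrm{diam}(\Omega)+h\bigr)\,h\sum_{j=a}^{b}|D_i^+\phi(x_j)|^2,
\]
using $(b-k+1)h\le(b-a+1)h\le|x_b-x_a|+h\le\mathrm{diam}(\Omega)+h$ since $x_a,\dots,x_b\in\Omega$. Summing over $k\in\{a,\dots,b\}$ (whose number is again $\le(\mathrm{diam}(\Omega)+h)/h$), then over all maximal interior blocks on $\ell$, and finally over all grid lines $\ell$ parallel to $e^i$ --- during which each $|D_i^+\phi(x)|^2$ for $x\in\Omega_h\setminus\partial\Omega_h$ is counted exactly once and, on the left, $\sum_\ell\sum_{\mathrm{blocks}}\sum_{k}|\phi(x_k)|^2=\sum_{x\in\Omega_h}|\phi(x)|^2$ --- one obtains
\[
\sum_{x\in\Omega_h}|\phi(x)|^2\le A^2\sum_{x\in\Omega_h\setminus\partial\Omega_h}|D_i^+\phi(x)|^2,
\]
with $A$ depending only on $\mathrm{diam}(\Omega)$; the $h$‑dependent factors are harmless because the inequality is vacuous unless $\Omega_h\setminus\partial\Omega_h\neq\emptyset$, which forces $h$ to be small relative to $\mathrm{diam}(\Omega)$. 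The only genuinely delicate point is the bookkeeping of which grid points carry a forward difference that is actually used; once the right anchor is selected everything else is a direct computation.
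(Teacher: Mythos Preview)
Your argument is correct, and the careful choice of the forward anchor $x_{b+1}$ so that every $D_i^+\phi$ appearing in the telescoping sum is evaluated at a point of $\Omega_h\setminus\partial\Omega_h$ is exactly the point that needs attention. The paper does not supply its own proof of this lemma but simply cites \cite{Kuroki-Soga}; your telescoping-plus-Cauchy--Schwarz argument along grid lines is the standard route and is what one finds in that reference, so there is nothing substantively different to compare.
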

\begin{proof}
This is proved in \cite{Kuroki-Soga}.
\end{proof}
The second one is for functions without the zero boundary condition, where the mean value of each function is involved in the inequality. This is essentially applied to the discrete Helmholtz-Hodge decomposition below, where we need to avoid the presence of values of the derivatives on $\partial\Omega_h$ (see Theorem \ref{22estimate}). Since we formulate the inequality with the central difference, we must look at the $\{2e^1,2e^2,2e^3\}$-translation invariance in the grid $h\Z^3$: $h\Z^3$ is divided into $G^1,\ldots,G^8$, each of which is invariant under the $\{2e^1,2e^2,2e^3\}$-translation, i.e.,   $G^1,\ldots,G^8$ are the sets of grid points with index $($even$,$ even$,$ even$)$, $($even$,$ even$,$ odd$)$, $($even$,$ odd$,$ even$)$, $($odd$,$ even$,$ even$)$, $($even$,$ odd$,$ odd$)$, $($odd$,$ odd$,$ even$)$, $($odd$,$ even$,$ odd$)$, $($odd$,$ odd$,$ odd$)$, respectively. Introduce the following notation: 
\begin{eqnarray*}
\Omega_h^\circ&:=&\{x\in\Omegain\,|\,x+a^1he^1+a^2he^2+a^3he^3\in\Omegain, \,\,\,a^1,a^2,a^3=0,1,2     \}, \\
\Omega_h^{\circ j}&:=&\Omega_h^\circ\cap G^j\quad(j=1,\cdots,8).
\end{eqnarray*}
The reason why we introduce $\Omega_h^\circ$ is to avoid the presence of values of the derivatives on $\partial\Omega_h$ in the Poincar\'e type inequality. 
For each $x\in \partial\Omega_h$, there exists $\omega\in\{\pm e^j\}_{j=1,2,3}$ such that $x+h\omega\not\in\Omega_h$; Then, $C_{4h}(x+h\omega)\not\subset\Omega$ and there exists $x^\ast\in\partial\Omega\cap C_{4h}(x+h\omega)$. Hence, we have $|x^\ast-x|\le |x^\ast-(x+h\omega)|+|(x+h\omega)-x|\le (1+2\sqrt{3})h$. 
\begin{eqnarray}\label{distance1}
\mbox{For each $x\in \partial\Omega_h$, there exists $x^\ast\in\partial\Omega$ such that $|x^\ast-x|\le(1+2\sqrt{3})h$}. 
\end{eqnarray}
Similarly, for each $x\in\partial\Omega_h^{\circ}$, we have $\omega\in\{\pm e^j\}_{j=1,2,3}$ such that  $x+h\omega\not\in\Omega_h^\circ$ and $a^i\in\{0,1,2 \}$ such that $x+h\omega+\sum_{i=1}^3a^ihe^i\in\partial\Omega_h$. Hence, \eqref{distance1} implies  
\begin{eqnarray}\label{distance2}
\mbox{ For each $x\in \partial\Omega_h^{\circ}$, there exists $x^\ast\in\partial\Omega$ such that $|x^\ast-x|\le2(1+2\sqrt{3})h$}. 
\end{eqnarray}
We always assume that $h>0$ is small enough so that $\Omega_h^{\circ j}$ is connected, i.e., for any $x,\tilde{x}\in\Omega_h^{\circ j}$, we have $\omega^1,\omega^2,\ldots,\omega^K\in\{\pm e^i\}_{i=1,2,3}$ such that $x+2h\omega^1+\cdots+2h\omega^k\in\Omega_h^{\circ j}$ for all $k\le K$ and $x+2h\omega^1+\cdots+2h\omega^K=\tilde{x}$.
\begin{Lemma}[{\bf Poincar\'e type inequality II}]\label{Poincare2}
For each function $\phi:\Omega_h\to\R$, we have 
\begin{eqnarray*}
\sum_{j=1}^8\sum_{x\in\Omega_h^{\circ j}} |\phi(x)-[\phi]^j|^2
\le \tilde{A}^2 \sum_{x\in\Omega_h\setminus\partial\Omega_h}  |\D \phi(x)|^2,
\quad[\phi]^j:=(\sharp \Omega_h^{\circ j})^{-1} \sum_{x\in\Omega_h^{\circ j}}\phi(x) ,
\end{eqnarray*}
where $\tilde{A}>0$ is a constant depending only on $\Omega$. 
\end{Lemma}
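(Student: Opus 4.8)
The plan is to reduce the statement, for each fixed $j$, to a Poincar\'e--Wirtinger inequality on the ``coarse'' grid $G^j$, and then to obtain this by a local--to--global argument over finitely many cubes. First I would record the role of $G^j$: on $G^j$ the central difference is a genuine forward difference on the grid of size $2h$. Indeed, if $x\in\Omega_h^{\circ j}$, then by the definition of $\Omega_h^{\circ}$ both $x+he^i$ and $x+2he^i$ lie in $\Omega_h\setminus\partial\Omega_h$, so
\[
\phi(x+2he^i)-\phi(x)=2h\,D_i\phi(x+he^i),
\]
with the right-hand side untouched by the zero-extension convention, and the midpoint $x+he^i$ an interior grid point. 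Hence it suffices to prove, for each $j$,
\[
\sum_{x\in\Omega_h^{\circ j}}\big|\phi(x)-[\phi]^j\big|^2\le \tilde A_j^2\sum_{x\in\Omega_h\setminus\partial\Omega_h}|\D\phi(x)|^2 ,
\]
and then take $\tilde A^2=\sum_{j=1}^8\tilde A_j^2$. Moreover, since $[\phi]^j$ minimizes $c\mapsto\sum_{x\in\Omega_h^{\circ j}}|\phi(x)-c|^2$, it is enough to produce one real constant $c$ (depending only on $\Omega$, not on $\phi$ or $h$) for which the bound holds with $[\phi]^j$ replaced by $c$.

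Next I would prove a local estimate on a cube of fixed side length $\ell$ with the $h$-dependence removed by averaging over paths. Fix a cube $Q$ such that any two grid points of $Q\cap\Omega_h^{\circ j}$ can be joined inside $Q\cap\Omega_h^{\circ j}$ by an axis-parallel ``monotone'' grid path of at most $C\ell/h$ steps; for cubes contained in $\Omega$ this is automatic, and near $\partial\Omega$ one arranges it from the Lipschitz-graph representation of the boundary after choosing $Q$ and the coordinate axes suitably. For $x,y\in Q\cap\Omega_h^{\circ j}$, write $\phi(x)-\phi(y)$ as a telescoping sum of $2h$-increments along such a path; each increment equals $\pm2h\,D_i\phi(m)$ at an interior midpoint $m$ (as above, the path staying in $\Omega_h^{\circ j}$ guarantees every such $m$ lies in $\Omega_h\setminus\partial\Omega_h$ and the difference is genuine). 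Applying Cauchy--Schwarz to this telescoping sum and then summing over all ordered pairs $(x,y)$, the key observation is that a fixed midpoint $m$ is visited by at most $O((\ell/h)^4)$ of these paths, since $m$ must lie on one of the three axis-segments, which pins down two of the transverse coordinates and leaves only $O((\ell/h)^4)$ admissible pairs, whereas $\sharp(Q\cap\Omega_h^{\circ j})\asymp(\ell/h)^3$. The powers of $h$ cancel and one gets
\[
\sum_{x\in Q\cap\Omega_h^{\circ j}}\big|\phi(x)-\langle\phi\rangle_{Q,j}\big|^2\le C(\ell)\sum_{m}|\D\phi(m)|^2 ,
\]
where $\langle\phi\rangle_{Q,j}$ is the average over $Q\cap\Omega_h^{\circ j}$, the right-hand sum runs over interior midpoints in a fixed dilate of $Q$, and $C(\ell)$ is independent of $h$.

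Finally I would chain the local estimates. Since $\overline{\Omega}$ is compact and $\partial\Omega$ is Lipschitz, cover $\overline{\Omega}$ by finitely many such cubes $Q_1,\dots,Q_M$, chosen so that in a connected ordering consecutive cubes overlap in a set containing $\asymp h^{-3}$ points of $\Omega_h^{\circ j}$; the standing assumption that $\Omega_h^{\circ j}$ is connected for small $h$ makes such an ordering available. On each overlap, comparing the two cube-averages to the average over the overlap and invoking the local estimate gives $|\langle\phi\rangle_{Q_m,j}-\langle\phi\rangle_{Q_{m'},j}|^2\le C h^{3}\sum_{m}|\D\phi(m)|^2$; telescoping along the bounded-length chain shows every $\langle\phi\rangle_{Q_m,j}$ differs from $c:=\langle\phi\rangle_{Q_1,j}$ by $\le C h^{3}\sum|\D\phi|^2$. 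Summing the local estimates over $m$, adding the terms $\sharp(Q_m\cap\Omega_h^{\circ j})\,|\langle\phi\rangle_{Q_m,j}-c|^2\le C\sum|\D\phi|^2$, and then summing over $j$, yields the claim with $\tilde A$ depending only on $\Omega$. The step I expect to be the main obstacle is the geometric input in the second paragraph near the boundary: one must guarantee axis-parallel grid paths that remain in $\Omega_h^{\circ j}$ while having length comparable to $h^{-1}$ times the Euclidean distance, i.e.\ rule out bottlenecks of width $\ll h$, which is exactly where the Lipschitz character of $\partial\Omega$ and the $4h$-collar built into the definition of $\Omega_h$ are used. A less constructive alternative, which I would keep in reserve, is a Rellich-type compactness argument on piecewise-affine interpolants of the grid functions as $h\to0$; it also yields $\tilde A=\tilde A(\Omega)$, though without an explicit value.
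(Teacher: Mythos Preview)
Your plan is sound and would yield the lemma, but it proceeds along quite different lines from the paper. The paper does \emph{not} carry out the constructive path-counting/chaining argument; it uses exactly the ``reserve'' alternative you mention at the end. Concretely, the paper argues in two steps by contradiction: first, for fixed $h$, a normalized minimizing sequence $\psi_k$ with $\sum_{\Omega_h^{\circ j}}|\psi_k|^2=1$, $[\psi_k]^j=0$ and $\sum|\D\psi_k|^2\to0$ is shown to converge (finite-dimensionality) to a function constant on the connected set $\Omega_h^{\circ j}$ with zero mean, a contradiction; second, to rule out $\tilde A_h\to\infty$ as $h\to0$, another normalized sequence is passed to piecewise-affine Lipschitz interpolants via Lemma~\ref{Lip-interpolation2}, extended uniformly to $H^1(\Omega)$ using the Lipschitz character of $\partial\Omega$ (and of $\partial\Theta_{h_k}$), and Rellich--Kondrachov gives a strong $L^2$-limit that is constant with zero mean and nonzero $L^2$-norm, again a contradiction.

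The trade-offs are the expected ones. Your route is constructive and in principle produces an explicit $\tilde A(\Omega)$, but the step you flag---guaranteeing axis-parallel grid paths of length $O(\ell/h)$ that stay in $\Omega_h^{\circ j}$ near $\partial\Omega$ when the Lipschitz graph direction is not one of the $e^i$---is genuine geometric work and is not bypassed by ``choosing the coordinate axes,'' since the grid axes are fixed. The paper's compactness route sidesteps this entirely at the cost of a non-explicit constant, but then pays elsewhere: it must justify a uniform $H^1$-extension from the $h$-dependent sets $\Theta_{h_k}^j$ to $\Omega$, which is where the Lipschitz boundary enters in their argument. Both proofs ultimately rely on the Lipschitz hypothesis and on the standing connectedness assumption for $\Omega_h^{\circ j}$; your telescoping over $2h$-increments and the identification $ \phi(x+2he^i)-\phi(x)=2h\,D_i\phi(x+he^i)$ with $x+he^i\in\Omega_h\setminus\partial\Omega_h$ are correct and are precisely why $\Omega_h^{\circ}$ is defined with the $\{0,1,2\}$-shift condition.
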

\begin{proof}
See Appendix. 
\end{proof}
Next two lemmas state Lipschitz interpolation of step functions, which is used to show convergence of a step function in $H^1$.   
\begin{Lemma}\label{Lip-interpolation} 
 For a function $u:\Omega_h\to\R$ with $u|_{\partial\Omega_h}=0$ and the step function $v$ derived from $u$ as 
\begin{eqnarray*}
&&v(x):=\left\{
\begin{array}{lll}
&u(y)\mbox{\quad for $x\in {C_{h}^+(y)}$, $y\in \Omega_{h}$},
\medskip\\
&0\mbox{\quad \,\,\,\,\,\,\,\,\mbox{otherwise}}, 
\end{array}
\right. \\
&&C_{r}^+(y):=[y_1,y_1+r)\times[y_2,y_2+r)\times[y_3,y_3+r),
\end{eqnarray*}
there exists  a Lipschitz continuous function $w:\Omega \to \R$ with a compact support  such that 
\begin{eqnarray*}
&&\norm w-v\norm_{L^2(\Omega)}\le K h\sum_{j=1}^3\norm D_j^+ u\norm_{\Omega_h},\\
&&\norm \partial_{x_i}w(x)\norm_{L^2(\Omega)}\le K \sum_{j=1}^3\norm D_j^+ u\norm_{\Omega_h}, \mbox{ $i=1,2,3$},
\end{eqnarray*}
where $K$ is a constant independent of $u$ and $h$. 
\end{Lemma}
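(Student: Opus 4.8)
The plan is to construct the Lipschitz interpolant $w$ explicitly by a standard ``$Q_1$'' (trilinear) tensor-product interpolation on the grid cubes, then verify the two estimates cell by cell and sum. First I would set up the interpolation: for each grid point $y\in h\Z^3$ assign the nodal value $u(y)$ (with the convention $u\equiv 0$ outside $\Omega_h$, which is already in force), and on each closed cube $\overline{C_h^+(y)}=[y_1,y_1+h]\times[y_2,y_2+h]\times[y_3,y_3+h]$ define $w$ to be the unique trilinear function agreeing with these $8$ nodal values at the $8$ corners. This $w$ is globally continuous (neighbouring cubes share faces and the trilinear functions agree on the shared face, being bilinear there and determined by the $4$ common nodal values), it is Lipschitz on each cube hence Lipschitz on $\R^3$, and it has compact support: if $C_h^+(y)$ is far enough from $\Omega_h$ that all $8$ corners of the cube lie outside $\Omega_h$, then $w\equiv 0$ there, so $\supp w$ is contained in an $O(h)$-neighbourhood of $\Omega_h$, which is a compact subset of $\Omega$ for $h$ small (using $C_{4h}(x)\subset\Omega$ for $x\in\Omega_h$).

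Next I would estimate $\norm w-v\norm_{L^2(\Omega)}$. On a fixed cube $C_h^+(y)$ the step function is the constant $u(y)$, while $w$ is trilinear with corner values $u(y+\sum_i \varepsilon_i he^i)$, $\varepsilon_i\in\{0,1\}$; hence on that cube $|w-v|$ is bounded by $\max_{\varepsilon}|u(y+\sum\varepsilon_i he^i)-u(y)|$, and each such difference telescopes into at most three terms of the form $h\,|D_j^+u|$ evaluated at nearby grid points. Squaring, integrating over the cube (volume $h^3$), and summing over $y$ — using that each grid point is reused a bounded number of times — yields $\norm w-v\norm_{L^2(\Omega)}^2\le K^2 h^2\sum_j\norm D_j^+u\norm_{\Omega_h}^2$, which is the first claim after taking square roots (absorbing the cross terms from $(\sum_j a_j)^2\le 3\sum_j a_j^2$ into $K$). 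For the gradient bound, on each cube the partial derivative $\partial_{x_i}w$ is, up to the trilinear weights in the other two variables (which lie in $[0,1]$), a difference quotient $\frac{u(y+he^i+\cdots)-u(y+\cdots)}{h}=D_i^+u$ at an appropriate grid point; so $|\partial_{x_i}w|\le \max$ of finitely many $|D_i^+u|$ values on the cube, and squaring, integrating, and summing over cubes gives $\norm\partial_{x_i}w\norm_{L^2(\Omega)}^2\le K^2\sum_j\norm D_j^+u\norm_{\Omega_h}^2$ (one only needs $D_i^+u$ on the right, but stating it with $\sum_j$ is harmless).

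The routine part is the bookkeeping of indices — exactly which grid points enter each cell estimate and the uniform bound on multiplicity — and the main (mild) obstacle is making sure the boundary convention is handled consistently: since $u|_{\partial\Omega_h}=0$ and $u$ vanishes outside $\Omega_h$, the telescoping differences near $\partial\Omega_h$ still have the form $h\,D_j^+u$ with $D_j^+u$ taken on $\Omega_h\setminus\partial\Omega_h$, so the right-hand sides genuinely only involve $\norm D_j^+u\norm_{\Omega_h}$ as stated (equivalently the sum over $\Omega_h\setminus\partial\Omega_h$, since $D_j^+u$ at a boundary point pointing outward is a difference of two zeros). I do not expect any real difficulty beyond this; the construction and estimates are the finite-difference analogue of the classical fact that nodal interpolation is $H^1$-stable and $L^2$-close to the piecewise-constant projection.
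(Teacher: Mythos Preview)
Your proposal is correct and follows essentially the same approach as the paper: the paper itself defers to \cite{Kuroki-Soga} for this lemma, but the companion Lemma~\ref{Lip-interpolation2} proved in the Appendix uses exactly the trilinear ($Q_1$) nodal interpolation you describe (built iteratively as linear in $x_1$, then bilinear in $(x_1,x_2)$, then trilinear), and the original argument in \cite{Kuroki-Soga} is the same construction. One very minor point: your parenthetical that near $\partial\Omega_h$ the telescoped differences only involve $D_j^+u$ on $\Omega_h\setminus\partial\Omega_h$ is not quite accurate --- $D_j^+u$ at a boundary point whose $e^j$-neighbour is interior need not vanish --- but this is harmless since such values are already included in $\norm D_j^+u\norm_{\Omega_h}$, and the differences based at points \emph{outside} $\Omega_h$ do vanish thanks to $u|_{\partial\Omega_h}=0$ and the zero extension.
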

\begin{proof}
This is proved in \cite{Kuroki-Soga}.
\end{proof}
\begin{Lemma}\label{Lip-interpolation2} 
For a function $u:\Omega_h\to\R$ and  
the step function $v$ derived from $u|_{\Omega_{h}^{\circ j}}$ as 
\begin{eqnarray*}
&&v(x)=u(y)\mbox{\quad for $x\in {C_{2h}^+(y)}$, $y\in \Omega_{h}^{\circ j} $}\quad (j=1,\ldots,8),\end{eqnarray*}
there exists  a Lipschitz continuous function 
$$w:\Theta_h^j \to \R,\quad \Theta_h^j:=\bigcup_{y\in\Omega_h^{\circ j}}C^+_{2h}(y) $$
such that 
\begin{eqnarray*}
&&\norm w-v\norm_{L^2(\Theta_h^j)}^2\le \tilde{K} h \norm \D u\norm_{\Omega_h\setminus\partial\Omega_h},\\
&&\norm \partial_{x_i}w(x)\norm_{L^2(\Theta_h^j)}\le \tilde{K}\norm \D u\norm_{\Omega_h\setminus\partial\Omega_h}, \mbox{ $i=1,2,3$},
\end{eqnarray*}
where $\tilde{K}$ is  a constant independent of $u$ and $h$. 
\end{Lemma}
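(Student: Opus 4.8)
The plan is to build $w$ by \emph{trilinear interpolation} of $u$ over the sublattice $\Omega_h^{\circ j}$, in the spirit of Lemma~\ref{Lip-interpolation} but now on a grid of spacing $2h$ and with the central differences $D_i$ in place of one-sided differences. Since the points of $\Omega_h^{\circ j}\subset G^j$ differ by multiples of $2h$ in each coordinate, the half-open cubes $\{C^+_{2h}(y)\}_{y\in\Omega_h^{\circ j}}$ are pairwise disjoint with union exactly $\Theta_h^j$; on each one I define $w$ to be the unique function that is affine in each variable separately and agrees with $u$ at the eight vertices $y+2h\varepsilon$, $\varepsilon\in\{0,1\}^3$. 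The defining property of $\Omega_h^\circ$ --- that $y+(a^1e^1+a^2e^2+a^3e^3)h\in\Omega_h\setminus\partial\Omega_h$ for all $a^1,a^2,a^3\in\{0,1,2\}$ --- applied with $a^i=2\varepsilon_i$ shows every such vertex lies in $\Omega_h$, so $w$ is well defined.

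First I would verify that $w$ is Lipschitz on $\Theta_h^j$. On a fixed cube $w$ is a polynomial, and $\partial_{x_i}w$ at a point of $C^+_{2h}(y)$ is a convex combination of the four quotients $\big(u(y+2he^i+2h\varepsilon')-u(y+2h\varepsilon')\big)/(2h)$ with $\varepsilon'\in\{0,1\}^2$ ranging over the two directions $\ne i$; each of these equals $D_iu$ evaluated at the midpoint $y+he^i+2h\varepsilon'$, which again by the $\Omega_h^\circ$ property (now with $a^i=1$ and the other two exponents in $\{0,2\}$) lies in $\Omega_h\setminus\partial\Omega_h$. Hence $|\partial_{x_i}w|\le\max_{\varepsilon'}|D_iu(y+he^i+2h\varepsilon')|$ on $C^+_{2h}(y)$, so $\nabla w$ is bounded; and on the common face of two neighboring cubes the two trilinear pieces restrict to one and the same bilinear function of the four shared vertex values, so $w$ is continuous across adjacent faces and hence continuous on $\Theta_h^j$. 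Being continuous with bounded gradient on the connected union of cubes $\Theta_h^j$ (connectedness of $\Omega_h^{\circ j}$ is assumed), $w$ is Lipschitz continuous. Note that every discrete derivative entering the construction is evaluated strictly inside $\Omega_h$ --- this is precisely the purpose of the two-layer buffer built into the definition of $\Omega_h^\circ$.

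It then remains to square and sum, with $\|\cdot\|_{\Omega_h\setminus\partial\Omega_h}$ the discrete $L^2$-norm (weight $h^3$). The pointwise bound gives $\|\partial_{x_i}w\|_{L^2(\Theta_h^j)}^2=\sum_{y\in\Omega_h^{\circ j}}\int_{C^+_{2h}(y)}|\partial_{x_i}w|^2\,dx\le\sum_{y\in\Omega_h^{\circ j}}(2h)^3\max_{\varepsilon'}|D_iu(y+he^i+2h\varepsilon')|^2$; as $(y,\varepsilon')$ runs over $\Omega_h^{\circ j}\times\{0,1\}^2$, the point $y+he^i+2h\varepsilon'$ meets each grid point of $\Omega_h\setminus\partial\Omega_h$ at most four times, so the right-hand side is $\le C\,h^3\sum_{z\in\Omega_h\setminus\partial\Omega_h}|D_iu(z)|^2\le C\,\|\D u\|_{\Omega_h\setminus\partial\Omega_h}^2$. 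For the other estimate, on $C^+_{2h}(y)$ one has $v\equiv u(y)=w(y)$ and $\mathrm{diam}\,C^+_{2h}(y)\le2\sqrt3\,h$, so $|w-v|\le2\sqrt3\,h\,\|\nabla w\|_{L^\infty(C^+_{2h}(y))}\le C\,h\max_{i,\varepsilon'}|D_iu(y+he^i+2h\varepsilon')|$ there, and the same counting yields the asserted $L^2$-bound for $w-v$. The construction is otherwise routine; the only genuine point of care --- and the reason the lemma is stated through $\Omega_h^{\circ j}$ rather than directly through $\Omega_h$ --- is to keep every discrete derivative off $\partial\Omega_h$, while the bookkeeping of overlaps across the eight parity classes $G^1,\dots,G^8$ is what yields the constant $\tilde K$ depending only on $\Omega$.
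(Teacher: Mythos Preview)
Your proof is correct and follows the same approach as the paper: both construct $w$ as the trilinear interpolant of $u$ on each cube $C^+_{2h}(y)$, observe that $\partial_{x_i}w$ is a convex combination of central differences $D_iu$ evaluated at the midpoints $y+he^i+2h\varepsilon'\in\Omega_h\setminus\partial\Omega_h$ (guaranteed by the definition of $\Omega_h^\circ$), and read off the $L^2$-bounds. Your write-up is in fact more explicit than the paper's, which simply expands the interpolation formula and then asserts ``It is clear that $w$ can be Lipschitz continuously connected with each other, yielding $w:\Theta_h^j\to\R$ that satisfies the inequalities''; you supply the face-matching argument and the overlap count that the paper omits.
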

\begin{proof} 
 See Appendix.
\end{proof}
\subsection{Discrete Helmholtz-Hodge decomposition}
Here is the  discrete Helmholtz-Hodge decomposition. 
\begin{Thm}\label{Projection}
For each function  $u:\Omega_h\to\R^3$, there exist unique functions $w:\Omega_h\to\R^3$ and  $\phi:\Omega_h\to\R$ such that 
\begin{eqnarray*}
&&\D\cdot w=0\quad\mbox{ on $\Omega_h$};\qquad
w+\D \phi =u\quad \mbox{ on $\Omega_h\setminus \partial\Omega_h$};\\
&&w=0\quad \mbox{ on $\partial\Omega_h$};\quad \sum_{x\in\Omega_h^{\circ j}}\phi(x)=0 \quad(j=1,\cdots,8),
\end{eqnarray*}
where $u$ does not necessarily need to banish on $\partial \Omega_h$. \end{Thm}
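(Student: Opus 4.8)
The plan is to eliminate $w$ and recast the whole statement as a single symmetric positive definite linear problem for $\phi$. Observe first that, given $u$, the required relations $w+\D\phi=u$ on $\Omega_h\setminus\partial\Omega_h$ and $w=0$ on $\partial\Omega_h$ force $w:=u-\D\phi$ on $\Omega_h\setminus\partial\Omega_h$ and $w:=0$ on $\partial\Omega_h$, so $w$ is completely determined by $\phi$, and the theorem reduces to producing a unique $\phi:\Omega_h\to\R$ with $\sum_{x\in\Omega_h^{\circ j}}\phi(x)=0$ for $j=1,\dots,8$ such that this $w$ satisfies $\D\cdot w=0$ on $\Omega_h$. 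By Lemma \ref{vector-cal1}, the condition $\D\cdot w=0$ on $\Omega_h$ is equivalent to $\sum_{x\in\Omega_h}w(x)\cdot\D\psi(x)=0$ for every $\psi:\Omega_h\to\R$, and since $w$ vanishes on $\partial\Omega_h$ this reads
\[
\sum_{x\in\Omega_h\setminus\partial\Omega_h}\D\phi(x)\cdot\D\psi(x)=\sum_{x\in\Omega_h\setminus\partial\Omega_h}u(x)\cdot\D\psi(x)\qquad\text{for all }\psi:\Omega_h\to\R.
\]

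Next I would isolate the kernel of this problem. Put $g^j:=\mathbf{1}_{G^j}$; since $x+he^i$ and $x-he^i$ always lie in the same sublattice, $\D g^j\equiv0$ on $\Omega_h$, so $g^j$ contributes nothing to either side of the identity above, while $\sum_{x\in\Omega_h^{\circ k}}g^j(x)=(\sharp\Omega_h^{\circ k})\delta_{jk}$ because $\Omega_h^{\circ k}\subset G^k$. Hence the linear map $\psi\mapsto\bigl(\sum_{x\in\Omega_h^{\circ j}}\psi(x)\bigr)_{j=1}^{8}$ restricts to an isomorphism of $\mathrm{span}\{g^1,\dots,g^8\}$ onto $\R^8$, every $\psi:\Omega_h\to\R$ splits uniquely as an element of $V:=\{\psi:\sum_{x\in\Omega_h^{\circ j}}\psi(x)=0,\ j=1,\dots,8\}$ plus a combination of the $g^j$, and the displayed identity for all $\psi$ is equivalent to the same identity for all $\psi\in V$. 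The problem thus becomes: find $\phi\in V$ with $a(\phi,\psi)=\ell(\psi)$ for all $\psi\in V$, where $a(\phi,\psi):=\sum_{x\in\Omega_h\setminus\partial\Omega_h}\D\phi(x)\cdot\D\psi(x)$ and $\ell(\psi):=\sum_{x\in\Omega_h\setminus\partial\Omega_h}u(x)\cdot\D\psi(x)$. Since $V$ is finite dimensional and $a$ is symmetric, the existence and uniqueness of such a $\phi$ --- hence of the pair $(w,\phi)$ --- reduce to showing that $a$ is positive definite on $V$.

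To prove positive definiteness, suppose $\phi\in V$ and $a(\phi,\phi)=0$, that is, $\D\phi(x)=0$ for every $x\in\Omega_h\setminus\partial\Omega_h$. Then the right-hand side of the Poincar\'e type inequality II (Lemma \ref{Poincare2}) vanishes, so $\phi\equiv[\phi]^j$ on $\Omega_h^{\circ j}$ for each $j$; combined with the normalization $[\phi]^j=0$ this yields $\phi\equiv0$ on $\Omega_h^\circ=\bigcup_{j=1}^8\Omega_h^{\circ j}$. It remains to carry this vanishing across the boundary layer $\Omega_h\setminus\Omega_h^\circ$: at an interior point $x$ the relation $\D\phi(x)=0$ means $\phi(x-he^i)=\phi(x+he^i)$ for each $i$, so the value of $\phi$ is transported unchanged along grid lines through interior points, and a finite induction on the number of such steps needed to reach $\Omega_h^\circ$ --- possible because $h$ is taken small enough that $\Omega_h$ has no thin protrusions and every grid point is connected to $\Omega_h^\circ$ in this way --- gives $\phi\equiv0$ on all of $\Omega_h$. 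This establishes that $a$ is positive definite on $V$, and the theorem follows.

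The step I expect to be the main obstacle is this last propagation of the vanishing of $\phi$ from the interior bulk $\Omega_h^\circ$ out to the grid points within a few layers of $\partial\Omega_h$; it is exactly what forces the smallness and connectedness hypotheses on $h$, and it is the reason the Poincar\'e type inequality II is formulated over $\Omega_h^\circ$ rather than over all of $\Omega_h$. (Existence alone can be obtained without it, by taking any minimizer of $\phi\mapsto\tfrac12\sum_{x\in\Omega_h\setminus\partial\Omega_h}|u(x)-\D\phi(x)|^2$ and correcting it by the unique combination of the $g^j$ that restores the eight normalizations, using $\D g^j\equiv0$; but the uniqueness of $\phi$ genuinely needs the propagation.)
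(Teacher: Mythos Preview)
Your approach is correct and takes a genuinely different route from the paper. The paper sets up the problem as a single linear system in the unknowns $(w|_{\Omega_h\setminus\partial\Omega_h},\phi)$, observes that the equations $\D\cdot w=0$ on $\Omega_h$ contain eight redundancies (one per parity class $G^j$, because $\sum_{x\in\Omega_h\cap G^j}\D\cdot w(x)=0$ automatically for any $w$ vanishing on $\partial\Omega_h$), removes them to obtain a square system, and then proves injectivity by pairing the homogeneous system with itself to get $\sum|w|^2+\sum|\D\phi|^2=0$ via Lemma~\ref{vector-cal1}. You instead eliminate $w$ at the outset and recast everything as a symmetric variational problem for $\phi$ alone on the quotient space $V$, with existence and uniqueness coming from positive definiteness of $a(\cdot,\cdot)$. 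Your route is more in the spirit of Lax--Milgram and makes the role of the eight constants $g^j$ transparent as the kernel of $\D$, whereas the paper's dimension-counting argument makes the square structure of the linear system explicit and is closer in spirit to the earlier Kuroki--Soga proof it modifies.

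Both arguments land on exactly the same crux: from $\D\phi=0$ on $\Omega_h\setminus\partial\Omega_h$ and the eight zero-mean constraints, deduce $\phi\equiv0$ on all of $\Omega_h$. The paper handles this in one sentence, asserting that $\D\phi=0$ on the interior forces $\phi$ to be constant on each $\Omega_h\cap G^j$ (not just on $\Omega_h^{\circ j}$) and then invoking the normalization. You are right to flag the propagation from $\Omega_h^\circ$ to the boundary layer as the step that carries the geometric content; it is precisely the point where the standing smallness/connectedness hypothesis on $h$ enters, and the paper's proof relies on it just as yours does. Your detour through Lemma~\ref{Poincare2} is harmless but unnecessary here: once you know $\D\phi=0$ on $\Omega_h\setminus\partial\Omega_h$, the connectedness assumption on $\Omega_h^{\circ j}$ already gives constancy on $\Omega_h^{\circ j}$ directly, and the same propagation argument you sketch extends it outward.
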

\begin{proof}
We modify the proof of Theorem 2.2 of \cite{Kuroki-Soga}. First, we note that any function $w:\Omega_h\to\R^3$ with $w|_{\partial\Omega_h}=0$ satisfies for each $j=1,\cdots,8$,
\begin{eqnarray}\label{trivial}
\sum_{x\in\Omega_h\cap G^j}\D\cdot w(x)=\sum_{i=1}^3\sum_{x\in\Omega_h\cap G^j}\frac{w_i(x+he^i)-w_i(x-he^i)}{2h}=0,
\end{eqnarray}
due to cancelation.  We label each point of $\Omega_h\setminus\partial\Omega_h$ and $\partial\Omega_h$ as 
$$ \Omega_h\setminus\partial\Omega_h=\{x^{1},x^{2},\ldots,x^{a}  \},\quad\partial\Omega_h=\{\bar{x}^1,\bar{x}^2,\ldots,\bar{x}^b\}.$$
Let $w,\phi$ be unknown functions to be determined. Introduce $y\in\R^{4a+b}$ and $\alpha\in\R^{4a+b+8}$ as  
\begin{eqnarray*}
y&=&\big(w_1(x^{1}),\ldots,w_1(x^{a}),w_2(x^{1}),\ldots,w_2(x^{a}),w_3(x^{1}),\ldots,w_3(x^{a}), \phi(x^1),\ldots,\phi(x^{a}),\\
&&\phi(\bar{x}^1),\ldots,\phi(\bar{x}^b)   \big),\\
\alpha&=&\big(0,\ldots,0,u_1(x^{1}),\ldots,u_1(x^{a}),u_2(x^{1}),\ldots,u_2(x^{a}),u_3(x^{1}),\ldots,u_3(x^{a}),\\
&&\quad 0,0,0,0,0,0,0,0        \big),
\end{eqnarray*}
where $\alpha$ has $a+b$ zero in front of $u_1(x^1)$.   Then, the equations $\D\cdot w=0$ on $\Omega_h$ , $w+\D \phi =u$ on $\Omega_h\setminus \partial\Omega_h$ with the zero mean constraint of $\phi$  give a $(4a+b+8)$-system of linear equations, which is denoted by $\tilde{A}y=\alpha$ with a $(4a+b+8)\times(4a+b)$-matrix $\tilde{A}$.  Due to \eqref{trivial}, we find eight trivial $0=0$ from  $\tilde{A}y=\alpha$. Therefore, $\tilde{A}y=\alpha$ can be deduced to be $Ay=\beta$ with a $(4a+b)\times(4a+b)$-matrix $A$ and $\beta\in \R^{4a+b}$. Note that $A$ is independent of $u$, and that $\beta=0$ if $u=0$. 

Our assertion holds, if $A$ is invertible. To prove invertibility of $A$, we show that $Ay=0$ if and only if $y=0$. There is at least one $y$ satisfying $Ay=0$. Then, we obtain  at least one pair $w,\phi$ satisfying 
\begin{eqnarray}\nonumber
&&\D\cdot w=0\quad\mbox{ on $\Omega_h$};\qquad
w+\D \phi =0\quad \mbox{ on $\Omega_h\setminus \partial\Omega_h$};\quad w=0\quad \mbox{ on $\partial\Omega_h$};\\\label{consthh}
&& \sum_{x\in\Omega_h^{\circ j}}\phi(x)=0 \quad(j=1,\cdots,8)
\end{eqnarray}
By Lemma \ref{vector-cal1}, we obtain
\begin{eqnarray*}
\sum_{x\in\Omega_h\setminus\partial\Omega_h}w(x)\cdot\D\phi(x)&=&\sum_{x\in\Omega_h}w(x)\cdot\D\phi(x)
=\sum_{x\in\Omega_h}(\D\cdot w(x))\phi(x)=0,\\
0&=&\sum_{x\in\Omega_h\setminus\partial\Omega_h}(w(x)+\D\phi(x))\cdot (w(x)+\D\phi(x))\\
&=&\sum_{x\in \Omega_h\setminus\partial\Omega_h}|w(x)|^2+\sum_{x\in \Omega_h\setminus\partial\Omega_h}|\D\phi(x)|^2.
\end{eqnarray*}
Therefore, $w=0$ on $\Omega_h$ and $\D\phi=0$ on  $\Omega_h\setminus\partial\Omega_h$. The latter equality implies that $\phi$ is constant on $\Omega_h\cap G^j$ for each  $j=1,\cdots,8$, and hence   \eqref{consthh} implies $\phi=0$.  Thus, $A$ is invertible. 

Suppose that there are two pairs $w,\phi$ and $\tilde{w},\tilde{\phi}$ which satisfy the assertion.  Then, we see that $w-\tilde{w}$, $\phi-\tilde{\phi}$ yields the unique trivial solution of $Ay=0$. Therefore, we conclude that $w=\tilde{w}$ and $\phi=\tilde{\phi}$.   
\end{proof}
\begin{Def}
Define the discrete Helmholtz-Hodge decomposition operator $P_h$ 
 for each function $u:\Omega_h\to\R^3$ as 
$$P_hu:=w,\,\,\,\mbox{$w$ is the one obtianed in Theorem \ref{Projection}. }$$
\end{Def}
\begin{Thm}\label{22estimate}
The following estimates hold for  the decomposition $u=P_h u+\D \phi$: 
\begin{eqnarray*}
&&\sum_{x\in \Omega_h}|P_h u(x)|^2\le \sum_{x\in\Omega_h\setminus \partial\Omega_h}|u(x)|^2,\quad \sum_{x\in \Omega_h\setminus \partial\Omega_h}|\D\phi(x)|^2\le \sum_{x\in\Omega_h\setminus \partial\Omega_h}|u(x)|^2,\\
&&\sum_{x\in \Omega_h^{\circ}}|\phi(x)|^2\le \tilde{A}^2\sum_{x\in\Omega_h\setminus \partial\Omega_h}|\D\phi(x)|^2\le \tilde{A}^2\sum_{x\in\Omega_h\setminus \partial\Omega_h}|u(x)|^2,
\end{eqnarray*}
where $\tilde{A}>0$ is the constant from the discrete Poincar\'e type inequality II. Furthermore, if $u=0$ on $(\Omega_h\setminus\Omega_h^{\circ})\cup\partial\Omega_h^\circ$, we have 
\begin{eqnarray}\label{242424242}
\sum_{x\in \Omega_h\setminus \partial\Omega_h}|u(x)-P_h u(x)|^2\le \tilde{A}^2\sum_{x\in\Omega_h^{\circ}}|\D\cdot u(x)|^2. 
\end{eqnarray}
\end{Thm}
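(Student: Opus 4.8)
The plan is to read off all four bounds from the orthogonality of the decomposition $u=P_h u+\D\phi$ on $\Omega_h\setminus\partial\Omega_h$, the Poincar\'e type inequality II, and a single summation by parts. Write $w:=P_h u$ and let $\phi$ be the potential from Theorem \ref{Projection}, so that $u=w+\D\phi$ on $\Omega_h\setminus\partial\Omega_h$, $\D\cdot w=0$ on $\Omega_h$, $w=0$ on $\partial\Omega_h$, and $\sum_{x\in\Omega_h^{\circ j}}\phi(x)=0$ for $j=1,\dots,8$. First I would prove orthogonality: by Lemma \ref{vector-cal1} applied to $w$ and $\phi$, together with $w|_{\partial\Omega_h}=0$ and $\D\cdot w\equiv0$,
$$\sum_{x\in\Omega_h\setminus\partial\Omega_h}w(x)\cdot\D\phi(x)=\sum_{x\in\Omega_h}w(x)\cdot\D\phi(x)=-\sum_{x\in\Omega_h}(\D\cdot w(x))\phi(x)=0.$$

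Expanding $|u|^2=|w|^2+2\,w\cdot\D\phi+|\D\phi|^2$ and summing over $\Omega_h\setminus\partial\Omega_h$ yields the Pythagorean identity $\sum_{\Omega_h\setminus\partial\Omega_h}|u|^2=\sum_{\Omega_h\setminus\partial\Omega_h}|w|^2+\sum_{\Omega_h\setminus\partial\Omega_h}|\D\phi|^2$; since $w$ vanishes on $\partial\Omega_h$, the first term on the right equals $\sum_{\Omega_h}|P_h u|^2$, so discarding either nonnegative summand gives the first two estimates. For the third, apply Lemma \ref{Poincare2} to $\phi$: the normalization forces $[\phi]^j=0$ for every $j$, and since $\Omega_h^\circ$ is the disjoint union of the $\Omega_h^{\circ j}$ (the $G^j$ partition $h\Z^3$), the inequality becomes $\sum_{\Omega_h^\circ}|\phi|^2\le\tilde A^2\sum_{\Omega_h\setminus\partial\Omega_h}|\D\phi|^2$, which chained with the second estimate gives the stated bound on $\sum_{\Omega_h^\circ}|\phi|^2$.

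The delicate part is \eqref{242424242}, where the hypothesis that $u$ vanishes on $(\Omega_h\setminus\Omega_h^\circ)\cup\partial\Omega_h^\circ$ is used. On $\Omega_h\setminus\partial\Omega_h$ one has $u-P_h u=\D\phi$, and by the orthogonality already shown $\sum_{\Omega_h\setminus\partial\Omega_h}|\D\phi|^2=\sum_{\Omega_h\setminus\partial\Omega_h}\D\phi\cdot(w+\D\phi)=\sum_{\Omega_h\setminus\partial\Omega_h}\D\phi\cdot u$. Now I would perform summation by parts to move the difference operator from $\phi$ onto $u$. The role of the hypothesis is that, extended by zero, $u$ is supported in $\Omega_h^\circ\setminus\partial\Omega_h^\circ$, and every such $x$ satisfies $\{x\pm he^i\}\subset\Omega_h^\circ\subset\Omega_h\setminus\partial\Omega_h$; hence the discrete derivatives that occur are all genuine, no zero-extension artifact appears, and the index shift on the full grid $h\Z^3$ (on which each $D_i$ is exactly anti-self-adjoint) gives $\sum_{\Omega_h\setminus\partial\Omega_h}\D\phi\cdot u=-\sum_{x\in\Omega_h^\circ}\phi(x)\,\D\cdot u(x)$, the support of $\D\cdot u$ lying in $\Omega_h^\circ$ for the same reason. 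Cauchy--Schwarz then gives $\sum_{\Omega_h\setminus\partial\Omega_h}|\D\phi|^2\le\big(\sum_{\Omega_h^\circ}|\phi|^2\big)^{1/2}\big(\sum_{\Omega_h^\circ}|\D\cdot u|^2\big)^{1/2}$, and inserting the third estimate and cancelling one factor of $\big(\sum_{\Omega_h\setminus\partial\Omega_h}|\D\phi|^2\big)^{1/2}$ (the case $\D\phi\equiv0$ being trivial) yields \eqref{242424242}.

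I expect the only real obstacle to be the bookkeeping in that summation-by-parts step: one must identify precisely which grid points carry nonzero contributions and confirm that all boundary and zero-extension terms cancel --- which is exactly what the two-layer cushion encoded in the definitions of $\Omega_h^\circ$ and $\partial\Omega_h^\circ$ is designed to guarantee. Everything else reduces to orthogonality, the Pythagorean identity, Cauchy--Schwarz, and the cited Poincar\'e inequality.
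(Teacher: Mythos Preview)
Your proposal is correct and follows essentially the same route as the paper: orthogonality of $P_hu$ and $\D\phi$ via Lemma~\ref{vector-cal1}, the resulting Pythagorean identity for the first two bounds, Lemma~\ref{Poincare2} with $[\phi]^j=0$ for the third, and for \eqref{242424242} the identity $\sum_{\Omega_h\setminus\partial\Omega_h}|\D\phi|^2=\sum u\cdot\D\phi$ followed by summation by parts (using the support hypothesis to localize $\D\cdot u$ to $\Omega_h^\circ$), Cauchy--Schwarz, and the Poincar\'e bound on $\phi$. The paper's demonstration of \eqref{242424242} is organized slightly differently (it factors $\sum|u-P_hu|^2$ as a product of two square roots at the outset), but the ingredients and logic are identical to yours.
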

\begin{proof} 
The assertion follows from reasoning similar to the proof of Theorem 2.3 of \cite{Kuroki-Soga} with Lemma \ref{vector-cal1} and  the discrete Poincar\'e type inequality II. For readers' convenience, we demonstrate  \eqref{242424242}:
\begin{eqnarray*}
&&\sum_{x\in \Omega_h\setminus \partial\Omega_h}|u(x)-P_h u(x)|^2=\sqrt{\sum_{x\in \Omega_h\setminus \partial\Omega_h}|u(x)-P_h u(x)|^2}\sqrt{\sum_{x\in \Omega_h\setminus \partial\Omega_h}|\D\phi(x)|^2}\\
&&\quad = \sum_{x\in \Omega_h\setminus \partial\Omega_h}(u(x)-P_h u(x))\cdot \D\phi(x)=\sum_{x\in \Omega_h\setminus \partial\Omega_h}u(x)\cdot \D\phi(x)\\
&&\quad =-\sum_{x\in \Omega_h\setminus \partial\Omega_h}
(\D\cdot u(x))\phi(x)=-\sum_{x\in \Omega_h^{\circ}}(\D\cdot u(x))\phi(x)\\
&&\quad \le\sqrt{\sum_{x\in \Omega_h^{\circ}}|\D\cdot u(x)|^2}
\sqrt{\sum_{x\in \Omega_h^{\circ}} |\phi(x)|^2 }
\le \sqrt{\sum_{x\in \Omega_h^{\circ}}|\D\cdot u(x)|^2} 
\sqrt{\tilde{A}^2\sum_{x\in \Omega_h\setminus \partial\Omega_h}|\D\phi(x)|^2}.
\end{eqnarray*}
\end{proof}
\noindent Note that the values of $\D\phi$ on $\partial\Omega_h$ are out of control in the decomposition, which requires the discrete Poincar\'e type inequalities not to contain those values; We will discuss an inequality corresponding to \eqref{242424242} without the condition  $u=0$ on $(\Omega_h\setminus\Omega_h^{\circ})\cup\partial\Omega_h^\circ$ in Section 3. 

\subsection{Discrete Navier-Stokes equations}

Let $\tau>0$ be the time-discretization parameter and let $T_\tau\in\N$ be such that $T\in[\tau T_\tau,\tau T_\tau+\tau)$. For initial data  $v^0=(v^0_1,v^0_2,v^0_3)\in L^2_{0,\sigma}(\Omega)$ and the external force   $f=(f_1,f_2,f_3)\in L^2_{loc}([0,\infty); L^2(\Omega)^3)$, introduce $\tilde{u}^0=(\tilde{u}_1^0,\tilde{u}_2^0,\tilde{u}_3^0):\Omega_h\to\R^3$, $f^{n}=(f^{n}_1,f^{n}_2,f^{n}_3):\Omega_h\to\R^3$, $n=0,1,\cdots$ as  
\begin{eqnarray*}
\tilde{u}^0_i(x)&=&\left\{
\begin{array}{lll}
&\dis h^{-3}\int_{{C_h(x)}}v^0_i(y)dy,\quad x\in\Omega_h\setminus\partial\Omega_h,
\medskip\\
&0\mbox{\quad\quad\quad\,\,\,\quad\,\,\, otherwise},
\end{array}
\right. \\\\
\\
f^{n}_i(x) &:=&\tau^{-1}h^{-3}\int_{\tau n}^{\tau (n+1)}\int_{{C_h(x)}}f_i(t,y)dydt, \quad   x\in\Omega_h.
\end{eqnarray*}
 We define functions   $\tu^{n+1}(\cdot)=(\tu^{n+1}_1(\cdot),\tu^{n+1}_2(\cdot),\tu_3^{n+1}(\cdot)):\Omega_h\to\R^3$, $n=0,1,\ldots,T_\tau-1$ and $u^n(\cdot)=(u^n_1(\cdot),u^n_2(\cdot),u_3^n(\cdot)):\Omega_h\to\R^3$, $n=0,1,\ldots,T_\tau$  in the following manner ($\tu^0$ is already defined above):     
\begin{eqnarray}\label{initial-1}
u^0&=&P_h \tilde{u}^0,\\\label{fractional-1}
\frac{\tu^{n+1}(x)-u^n(x)}{\tau}&=& -\frac{1}{2}\sum_{j=1}^3 \Big(u^n_j(x-he^j)D_j\tu^{n+1}(x-he^j)\\\nonumber
&&+u^n_j(x+he^j)D_j\tu^{n+1}(x+he^j)\Big)\\\nonumber
&&+\sum_{j=1}^3 D_j^2\tu^{n+1} (x)+f^n(x),\quad x\in\Omega_h\setminus\partial\Omega_h,\\\label{fractional-2}
\tu^{n+1}(x)&=&0, \quad x\in\partial\Omega_h,\\\label{fractional-3}
u^{n+1}&=&P_h\tu^{n+1},
\end{eqnarray}
where \eqref{initial-1}-\eqref{fractional-3} are recurrence equations in the implicit form and called the discrete Navier-Stokes equations. 

For functions $u,w:\Omega_h\to\R^3$ or $\R$, we define the discrete $L^2$-inner product and norm as 
$$(u,v)_{\Omega_h}:=\sum_{x\in\Omega_h}u(x)\cdot w(x) h^3,\quad \norm u \norm_{\Omega_h}:=\sqrt{(u,u)_{\Omega_h}}.$$
The next theorem states unconditional solvability of the implicit equations  \eqref{initial-1}-\eqref{fractional-2}.  
\begin{Thm}\label{implicit}
Suppose that $u^n:\Omega_h\to\R^3$ satisfies $\D\cdot u^n=0$ on $\Omega_h\setminus\partial\Omega_h$ and $u^n=0$ on $\partial\Omega_h$ for some $n$. Then,  the equation \eqref{fractional-1}-\eqref{fractional-2} is uniquely solvable with respect to $\tu^{n+1}$ for any mesh size $h,\tau$. 
\end{Thm}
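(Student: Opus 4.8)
The plan is to recast the implicit system \eqref{fractional-1}--\eqref{fractional-2} as a linear equation $\mathcal{L}\tu^{n+1} = g$ on the finite-dimensional space $V_h := \{\,\psi:\Omega_h\to\R^3 \mid \psi|_{\partial\Omega_h}=0\,\}$, where the right-hand side $g$ is built from the known data $u^n$ and $f^n$, and $\mathcal{L}$ is the linear operator
\[
\mathcal{L}\psi(x) := \frac{\psi(x)}{\tau} + \frac12\sum_{j=1}^3\Bigl(u^n_j(x-he^j)D_j\psi(x-he^j) + u^n_j(x+he^j)D_j\psi(x+he^j)\Bigr) - \sum_{j=1}^3 D_j^2\psi(x),
\]
acting on $x\in\Omega_h\setminus\partial\Omega_h$, with the convention that $\psi$ is extended by zero. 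Since $V_h$ is finite-dimensional, it suffices to prove that $\mathcal{L}$ is injective, i.e.\ $\mathcal{L}\psi = 0$ on $\Omega_h\setminus\partial\Omega_h$ together with $\psi|_{\partial\Omega_h}=0$ forces $\psi=0$.

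To prove injectivity, I would take the discrete $L^2$-inner product of $\mathcal{L}\psi = 0$ with $\psi$ itself (multiplying by $h^3$ and summing over $\Omega_h\setminus\partial\Omega_h$, equivalently over $\Omega_h$ since $\psi$ vanishes on the boundary). The diffusion term $-\sum_j D_j^2\psi$ contributes, after summation by parts (Lemma \ref{vector-cal1}-type manipulation, or direct shifting of indices as in the proof of Lemma \ref{vector-cal1}), the nonnegative quantity $\sum_{i,x}|D_i^+\psi(x)|^2 h^3 \ge 0$, with no boundary remainder because $\psi\equiv 0$ outside $\Omega_h\setminus\partial\Omega_h$. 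The term $\tfrac1\tau(\psi,\psi)_{\Omega_h} = \tfrac1\tau\|\psi\|_{\Omega_h}^2$ is manifestly nonnegative. The crucial point is that the convective term
\[
\frac12\sum_{j=1}^3\sum_{x}\Bigl(u^n_j(x-he^j)D_j\psi(x-he^j)\cdot\psi(x) + u^n_j(x+he^j)D_j\psi(x+he^j)\cdot\psi(x)\Bigr)h^3
\]
vanishes. I would establish this antisymmetry by reindexing: shifting $x\mapsto x+he^j$ in the first inner sum and $x\mapsto x-he^j$ in the second transforms the bracketed expression into $\sum_j\sum_x u^n_j(x)D_j\psi(x)\cdot\bigl(\psi(x+he^j)+\psi(x-he^j)\bigr)h^3$; writing $D_j\psi(x)\cdot\bigl(\psi(x+he^j)+\psi(x-he^j)\bigr) = \tfrac{1}{2h}\bigl(|\psi(x+he^j)|^2 - |\psi(x-he^j)|^2\bigr)$ (a discrete chain rule) and then summing by parts in the $j$-direction, the sum telescopes and equals $-\sum_j\sum_x \bigl(D_j u^n_j(x)\bigr)\tfrac12\bigl(\text{something symmetric}\bigr)h^3$ plus boundary terms; the boundary terms vanish since $\psi\equiv 0$ on $\partial\Omega_h$ and the discrete-divergence-free hypothesis $\D\cdot u^n = 0$ on $\Omega_h\setminus\partial\Omega_h$ kills the interior part. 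This is exactly the discrete analogue of $\int (u\cdot\nabla)\psi\cdot\psi\,dx = -\tfrac12\int(\nabla\cdot u)|\psi|^2 dx = 0$, and the particular symmetrized form chosen in \eqref{fractional-1} (averaging over $x\pm he^j$) is designed precisely so that this cancellation goes through cleanly on the grid.

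Putting the three pieces together, $(\mathcal{L}\psi,\psi)_{\Omega_h} = \tfrac1\tau\|\psi\|_{\Omega_h}^2 + \sum_{i=1}^3\|D_i^+\psi\|^2_{\Omega_h\setminus\partial\Omega_h} = 0$ forces $\psi = 0$. Hence $\mathcal{L}$ is injective, therefore bijective on $V_h$, and \eqref{fractional-1}--\eqref{fractional-2} has a unique solution $\tu^{n+1}\in V_h$ for every $h,\tau>0$, with no restriction relating $\tau$ and $h$ — this is the unconditional solvability.

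The main obstacle is the bookkeeping in the convective-term cancellation: one must carry the reindexing, the discrete product rule, and the summation by parts through while tracking exactly which boundary strips ($\Gamma_h^{i\pm}$, $\tilde\Gamma_h^{i\pm}$) produce remainder terms and verifying each such remainder is annihilated either by $\psi|_{\partial\Omega_h}=0$ or by the hypotheses on $u^n$. One subtlety worth flagging is that $u^n_j(x\pm he^j)$ is evaluated at boundary points where $u^n$ does vanish, and the discrete derivatives $D_j\psi$ are taken with the zero-extension convention, so care is needed to confirm that the symmetrization in \eqref{fractional-1} together with the zero-extension is consistent with the summation-by-parts identity; once that is checked, the quadratic-form argument is routine. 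An alternative, if one prefers to avoid the detailed cancellation, is to invoke it abstractly: the convective part of $\mathcal{L}$ is a skew-symmetric operator on $V_h$ under $(\cdot,\cdot)_{\Omega_h}$ (this skew-symmetry is itself the content of the cancellation), so $\mathcal{L} = \tfrac1\tau I + (\text{positive semidefinite}) + (\text{skew-symmetric})$ has trivial kernel; but proving the skew-symmetry still reduces to the same computation.
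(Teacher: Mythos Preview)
Your proposal is correct and follows essentially the same approach as the paper: recast the implicit step as a square linear system on the finite-dimensional space of grid functions vanishing on $\partial\Omega_h$, and prove injectivity by testing the homogeneous equation against the unknown itself, so that the identity term gives $\tfrac1\tau\|\psi\|_{\Omega_h}^2$, the diffusion term gives $\sum_j\|D_j^+\psi\|_{\Omega_h}^2$, and the convective term collapses to $-\tfrac12\sum_x(\D\cdot u^n)|\psi|^2 h^3=0$. The only cosmetic difference is the bookkeeping for the convective cancellation: the paper expands $D_j\psi(x\pm he^j)$ directly and shifts one of the resulting cross-terms by $\pm 2he^j$ to cancel against the other, whereas you first recenter to $\sum_x u_j^n(x)D_j\psi(x)\cdot(\psi(x+he^j)+\psi(x-he^j))$, apply the discrete chain rule $D_j\psi\cdot(\psi^+ +\psi^-)=\tfrac1{2h}(|\psi^+|^2-|\psi^-|^2)$, and then invoke Lemma~\ref{vector-cal1}; these are the same computation organized differently.
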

\begin{proof}
Although our proof is essentially the same as  the proof of Theorem 3.1 of \cite{Kuroki-Soga}, we demonstrate some calculation. We label the elements of $\Omega_h\setminus\partial\Omega_h$ as $x^{1},x^{2},\ldots,x^{a}$. Introduce $y,\alpha\in\R^{3a}$ as  
\begin{eqnarray*}
y&:=&\big(\tu_1^{n+1}(x^{1}),\ldots,\tu_1^{n+1}(x^{a}),\tu_2^{n+1}(x^{1}),\ldots,\tu_2^{n+1}(x^{a}),\tu_3^{n+1}(x^{1}),\ldots,\tu_3^{n+1}(x^{a})   \big),\\
\alpha&:=&\big(u_1^{n}(x^{1})+\tau f^{n}_1(x^1),\ldots,u_1^{n}(x^{a})+\tau f^{n}_1(x^a),u_2^{n}(x^{1})+\tau f^{n}_2(x^1),\\
&&\ldots,u_2^{n}(x^{a})+\tau f^{n}_2(x^a),u_3^{n}(x^{1})+\tau f^{n}_3(x^1),\ldots,u_3^{n}(x^{a}) +\tau f^{n}_3(x^a)  \big).
\end{eqnarray*}
Then,  \eqref{fractional-1}-\eqref{fractional-2} are re-written as the linear equations $A(u^n;h,\tau)y=\alpha$ 
with  a $3a\times3a$-matrix $A(u^n;h,\tau)$  depending on $u^n,h,\tau$. 

We prove that the matrix $A(u^n;h,\tau)$ is invertible if $u^n$ satisfies $\D\cdot u^n=0$ in $\Omega_h\setminus\partial\Omega_h$. It is enough to check that  $A(u^n;h,\tau)\tilde{y}=0$ has the unique solution $\tilde{y}=0$.  Let $\tilde{y}=y_0$ be a solution of $A(u^n;h,\tau)\tilde{y}=0$. Then, there exists at least one function $\tu^{n+1}:\Omega_h\to\R^3$ with $\tilde{u}^{n+1}|_{\partial\Omega_h}=0$ such that    
\begin{eqnarray*}
\tu^{n+1}(x)&=&-\frac{\tau}{2}\sum_{j=1}^3 \Big(u^n_j(x-he^j)D_j\tu^{n+1}(x-he^j)+u^n_j(x+he^j)D_j\tu^{n+1}(x+he^j)\Big)\\\nonumber
&&+\tau\sum_{j=1}^3D_j^2\tu^{n+1} (x),\quad x\in\Omega_h\setminus\partial\Omega_h.
\end{eqnarray*}
Then, we have
\begin{eqnarray*}
(\tu^{n+1},\tu^{n+1})_{\Omega_h}&=&\norm \tu^{n+1}\norm_{\Omega_h}^2\\
&=&-\frac{\tau}{2}\sum_{i,j=1}^3 \sum_{x\in\Omega_h\setminus\partial\Omega_h}\Big(u^n_j(x-he^j)D_j\tu^{n+1}_i(x-he^j)\\
&& + u^n_j(x+he^j)D_j\tu^{n+1}_i(x+he^j)\Big)\tu^{n+1}_i(x)h^3\\
 &&+\tau\sum_{i,j=1}^3 \sum_{x\in\Omega_h\setminus\partial\Omega_h} D_j^2\tu_i^{n+1}(x)\tu_i^{n+1}(x) h^3.
\end{eqnarray*}
The  above  two  summations are denoted by (i), (ii), respectively. With the zero boundary condition of $\tu^{n+1}$, we see that  
 \begin{eqnarray*}
{\rm(i)}& =&\sum_{i,j=1}^3 \sum_{x\in\Omega_h\setminus\partial\Omega_h}\Big(u^n_j(x-he^j)\frac{\tu^{n+1}_i(x)-\tu^{n+1}_i(x-2he^j)}{2h}\\
&&+u^n_j(x+he^j)\frac{\tu^{n+1}_i(x+2he^j)-\tu^{n+1}_i(x)}{2h} \Big)\tu^{n+1}_i(x)h^3\\
&=&\sum_{i,j=1}^3 \sum_{x\in\Omega_h\setminus\partial\Omega_h}
-\frac{u^n_j(x+he^j)-u^n_j(x-he^j)}{2h}\tu^{n+1}_i(x)^2h^3\\
&&+\sum_{i,j=1}^3 \sum_{x\in\Omega_h}
\frac{1}{2h}u^n_j(x+he^j)\tu^{n+1}_i(x+2he^j)\tu^{n+1}_i(x)h^3\\
&&-\sum_{i,j=1}^3 \sum_{x\in\Omega_h} \frac{1}{2h}u^n_j(x-he^j)\tu^{n+1}_i(x-2he^j)\tu^{n+1}_i(x)h^3.
\end{eqnarray*} 
Shifting $x$ to $x\mp 2he^j$ in the last summation, we obtain 
 \begin{eqnarray*}
{\rm(i)}=-\sum_{x\in\Omega_h\setminus\partial\Omega_h}\big(\D\cdot u^n(x)\big)|\tu^{n+1}(x)|^2h^3.
\end{eqnarray*} 
Similarly, we see that 
\begin{eqnarray*}
\rm{(ii)}&=&  \sum_{i,j=1}^3 \sum_{x\in\Omega_h\setminus\partial\Omega_h}
\frac{\tu_i^{n+1} (x+he^j)-2\tu_i^{n+1} (x)+\tu_i^{n+1} (x-he^j)}{h^2}\tu^{n+1}_i(x)h^3\\
&=& \sum_{i,j=1}^3 \sum_{x\in\Omega_h\setminus\partial\Omega_h}
\frac{\tu_i^{n+1} (x+he^j)-\tu_i^{n+1} (x)}{h^2}\tu^{n+1}_i(x)h^3\\
&&- \sum_{i,j=1}^3 \sum_{x\in\Omega_h\setminus\partial\Omega_h}\frac{\tu_i^{n+1} (x)-\tu_i^{n+1} (x-he^j)}{h^2}\tu^{n+1}_i(x)h^3\\
&=& \sum_{i,j=1}^3 \sum_{x\in\Omega_h}
\frac{\tu_i^{n+1} (x+he^j)-\tu_i^{n+1} (x)}{h^2}\tu^{n+1}_i(x)h^3\\
&&- \sum_{i,j=1}^3 \sum_{x\in\Omega_h}\frac{\tu_i^{n+1} (x+he^j)-\tu_i^{n+1} (x)}{h^2}\tu^{n+1}_i(x+he^j)h^3\\
&=&- \sum_{j=1}^3\norm D^+_j\tu^{n+1}\norm_{\Omega_h}^2\le0. 
\end{eqnarray*} 
Hence, the discrete divergence free constraint of $u^{n}$ implies 
$$\norm \tu^{n+1}\norm_{\Omega_h}^2+\tau\sum_{j=1}^3\norm D^+_j\tu^{n+1}\norm_{\Omega_h}^2=0.$$
Thus, we conclude that $\tu^{n+1}=0$ and $y_0=0$. 
\end{proof}
\begin{Thm}\label{L2-estimate}
For any  $h,\tau>0$,  the discrete problem \eqref{initial-1}-\eqref{fractional-3} is uniquely solvable for $n=0,1,\ldots,T_\tau-1$ with  the following estimates:  
\begin{eqnarray} \label{410}
\norm u^0\norm_{\Omega_h}&\le&\norm \tilde{u}^0\norm_{\Omega_h}\le \norm v^0\norm_{L^2(\Omega)^3},\\\label{4100}
\sum_{m=0}^n\norm f^{m}\norm_{\Omega_h}^2\tau &\le& \norm f\norm_{L^2([0,\tau (n+1)];L^2(\Omega)^3)}^2\le \norm f\norm_{L^2([0,T];L^2(\Omega)^3)}^2,\\\label{411}
\norm \tu^{n+1}\norm_{\Omega_h}&\le& \norm u^n\norm_{\Omega_h}+  \norm f^n\norm_{\Omega_h}\tau,\\\label{412}
\norm u^{n+1}\norm_{\Omega_h}&\le& \norm u^0\norm_{\Omega_h}+\sum_{m=0}^{T_\tau-1}\norm f^{m}\norm_{\Omega_h}\tau\\\nonumber   
&\le& \norm v^0\norm_{L^2(\Omega)^3}+\sqrt{T}\norm f\norm_{L^2([0,T];L^2(\Omega)^3)},\\\label{413}
\norm u^{n+1}\norm_{\Omega_h}^2&\le& \norm u^0\norm_{\Omega_h}^2-\sum_{m=0}^n  \Big(  \sum_{j=1}^3\norm D^+_j\tu^{m+1}\norm_{\Omega_h}^2 \Big)\tau \\\nonumber
 && +2\sum_{m=0}^n\norm u^m\norm_{\Omega_h} 
 \norm f^{m} \norm_{\Omega_h}\tau 
 +\sum_{m=0}^{n}\norm f^{m}\norm_{\Omega_h}^2\tau^2. 
\end{eqnarray}
\end{Thm}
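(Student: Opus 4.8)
The plan is to establish the estimates \eqref{410}--\eqref{413} in the order stated, since each one feeds the next, and to obtain unique solvability as an immediate consequence of Theorem \ref{implicit} together with the a priori control that \eqref{413} (or rather its derivation) provides on the divergence-free structure propagating in $n$.

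\textbf{Step 1: the trivial bounds \eqref{410} and \eqref{4100}.} For \eqref{410}, the inequality $\norm u^0\norm_{\Omega_h}\le\norm\tilde u^0\norm_{\Omega_h}$ is exactly the first estimate of Theorem \ref{22estimate} applied to $u=\tilde u^0$ (note $\tilde u^0$ vanishes on $\partial\Omega_h$, so the sum over $\Omega_h\setminus\partial\Omega_h$ equals the sum over $\Omega_h$). The inequality $\norm\tilde u^0\norm_{\Omega_h}\le\norm v^0\norm_{L^2(\Omega)^3}$ follows from the definition of $\tilde u^0$ as a cell average: by Jensen/Cauchy--Schwarz, $|\tilde u^0_i(x)|^2\le h^{-3}\int_{C_h(x)}|v^0_i(y)|^2dy$, and the cubes $C_h(x)$ for $x\in\Omega_h\setminus\partial\Omega_h$ have pairwise disjoint interiors and lie in $\Omega$, so summing against $h^3$ gives the claim. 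Estimate \eqref{4100} is the same cell-average argument applied to $f^n_i(x)=\tau^{-1}h^{-3}\int_{\tau n}^{\tau(n+1)}\int_{C_h(x)}f_i$, using Cauchy--Schwarz in both $t$ and $y$, then summing over $x$ and over $m=0,\dots,n$ to recover $\norm f\norm_{L^2([0,\tau(n+1)];L^2(\Omega)^3)}^2$.

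\textbf{Step 2: solvability and the fractional-step bounds \eqref{411}, \eqref{413}.} Proceed by induction on $n$. The induction hypothesis is that $u^n$ is defined with $\D\cdot u^n=0$ on $\Omega_h\setminus\partial\Omega_h$ and $u^n=0$ on $\partial\Omega_h$; this holds for $n=0$ by \eqref{initial-1} and Theorem \ref{Projection}. Then Theorem \ref{implicit} gives a unique $\tilde u^{n+1}$ solving \eqref{fractional-1}--\eqref{fractional-2}, and \eqref{fractional-3} with Theorem \ref{Projection} gives a unique $u^{n+1}$ with the required structure, closing the induction. For the energy bounds, take the discrete $L^2$-inner product of \eqref{fractional-1} with $\tilde u^{n+1}$ and multiply by $\tau$. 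The convection term vanishes: exactly as in the proof of Theorem \ref{implicit}, shifting the summation index turns it into $-\sum_{x\in\Omega_h\setminus\partial\Omega_h}(\D\cdot u^n(x))|\tilde u^{n+1}(x)|^2h^3$, which is $0$ by the induction hypothesis. The diffusion term contributes $-\tau\sum_{j=1}^3\norm D_j^+\tilde u^{n+1}\norm_{\Omega_h}^2$ by the same summation-by-parts computation as in Theorem \ref{implicit}. The forcing term is handled by Cauchy--Schwarz: $\tau(f^n,\tilde u^{n+1})_{\Omega_h}\le\tau\norm f^n\norm_{\Omega_h}\norm\tilde u^{n+1}\norm_{\Omega_h}$. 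This yields
\begin{eqnarray*}
\norm\tilde u^{n+1}\norm_{\Omega_h}^2+\tau\sum_{j=1}^3\norm D_j^+\tilde u^{n+1}\norm_{\Omega_h}^2
=(u^n,\tilde u^{n+1})_{\Omega_h}+\tau(f^n,\tilde u^{n+1})_{\Omega_h}
\le\big(\norm u^n\norm_{\Omega_h}+\tau\norm f^n\norm_{\Omega_h}\big)\norm\tilde u^{n+1}\norm_{\Omega_h},
\end{eqnarray*}
and dropping the nonnegative gradient term and dividing by $\norm\tilde u^{n+1}\norm_{\Omega_h}$ (trivial if it is zero) gives \eqref{411}. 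Keeping the gradient term and instead bounding $(u^n,\tilde u^{n+1})_{\Omega_h}\le\frac12\norm u^n\norm^2+\frac12\norm\tilde u^{n+1}\norm^2$ — or more directly, using \eqref{411} squared, $\norm\tilde u^{n+1}\norm^2\le\norm u^n\norm^2+2\tau\norm u^n\norm\norm f^n\norm+\tau^2\norm f^n\norm^2$, combined with $\norm u^{n+1}\norm\le\norm\tilde u^{n+1}\norm$ from Theorem \ref{22estimate} and the fact that the gradient term is lost only through the projection — gives, after telescoping over $m=0,\dots,n$, the accumulated estimate \eqref{413}. Here one uses that $\norm u^{m+1}\norm_{\Omega_h}^2\le\norm\tilde u^{m+1}\norm_{\Omega_h}^2$ and the identity above for $\norm\tilde u^{m+1}\norm^2$ in terms of $\norm u^m\norm^2$ minus the gradient dissipation plus the forcing contributions, summed.

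\textbf{Step 3: the uniform bound \eqref{412}.} Iterate \eqref{411} together with $\norm u^{n+1}\norm_{\Omega_h}\le\norm\tilde u^{n+1}\norm_{\Omega_h}$ (first estimate of Theorem \ref{22estimate}) to get $\norm u^{n+1}\norm_{\Omega_h}\le\norm u^n\norm_{\Omega_h}+\tau\norm f^n\norm_{\Omega_h}$, hence $\norm u^{n+1}\norm_{\Omega_h}\le\norm u^0\norm_{\Omega_h}+\sum_{m=0}^{n}\tau\norm f^m\norm_{\Omega_h}\le\norm u^0\norm_{\Omega_h}+\sum_{m=0}^{T_\tau-1}\tau\norm f^m\norm_{\Omega_h}$. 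For the last line of \eqref{412}, apply Cauchy--Schwarz in $m$: $\sum_{m=0}^{T_\tau-1}\tau\norm f^m\norm_{\Omega_h}\le\big(\sum_{m=0}^{T_\tau-1}\tau\big)^{1/2}\big(\sum_{m=0}^{T_\tau-1}\tau\norm f^m\norm_{\Omega_h}^2\big)^{1/2}$, bound the first factor by $\sqrt{\tau T_\tau}\le\sqrt{T}$ (or $\sqrt{T+\tau}$; the paper's convention $T\in[\tau T_\tau,\tau T_\tau+\tau)$ gives $\tau T_\tau\le T$), and the second by \eqref{4100} and \eqref{410}.

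\textbf{Main obstacle.} None of the steps is deep; the one point requiring care is the bookkeeping in Step 2 for the accumulated estimate \eqref{413} — specifically making sure that the dissipation term $\sum_{j}\norm D_j^+\tilde u^{m+1}\norm^2$ is correctly retained (not destroyed by the projection, since $\norm u^{m+1}\norm\le\norm\tilde u^{m+1}\norm$ only discards a nonnegative quantity) and that the cross terms $2\norm u^m\norm\norm f^m\norm\tau$ and $\norm f^m\norm^2\tau^2$ telescope without loss. The convection-term cancellation is the only structurally nontrivial ingredient, and it is already available verbatim from the proof of Theorem \ref{implicit} once one knows $\D\cdot u^n=0$, which is exactly what the induction in Step 2 supplies.
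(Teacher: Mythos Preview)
Your proof is correct and follows the same route the paper indicates (it simply refers to Theorem 4.1 of \cite{Kuroki-Soga}, whose ingredients are precisely the energy identity from the proof of Theorem \ref{implicit} together with the projection bound of Theorem \ref{22estimate}). One small expository point: your derivation of \eqref{413} in Step 2 is slightly tangled --- squaring \eqref{411} alone does not retain the dissipation --- but the clean way you effectively use is to bound the right-hand side of the energy identity $\norm\tilde u^{n+1}\norm^2+\tau\sum_j\norm D_j^+\tilde u^{n+1}\norm^2\le(\norm u^n\norm+\tau\norm f^n\norm)\norm\tilde u^{n+1}\norm$ by applying \eqref{411} to the factor $\norm\tilde u^{n+1}\norm$ on the right, yielding $\norm\tilde u^{n+1}\norm^2+\tau\sum_j\norm D_j^+\tilde u^{n+1}\norm^2\le(\norm u^n\norm+\tau\norm f^n\norm)^2$, after which $\norm u^{n+1}\norm^2\le\norm\tilde u^{n+1}\norm^2$ and telescoping give \eqref{413} exactly.
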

\begin{proof}
We may follow the proof of Theorem 4.1 of \cite{Kuroki-Soga}.
\end{proof}
 Theorem \ref{L2-estimate} implies convergence of the discrete solution to a Leray-Hopf weak solution (up to a subsequence). Set $\delta:=(h,\tau)$. For the solution $u^n,\tu^{n+1}$ of  \eqref{initial-1}-\eqref{fractional-3}, define the step functions  $u_\delta, \tu_\delta,w^i_\delta:[0,T]\times\Omega\to\R^3$, $i=1,2,3$ as 
\begin{eqnarray}\label{step1}
u_\delta(t,x)&:=&\left\{
\begin{array}{lll}
&u^{n}(y)\mbox{\quad\quad\ \,\,\,\, for $t\in[n\tau,n\tau+\tau)$, $x\in {C_h^+(y)}$, $y\in \Omega_h$},
\medskip\\
&0\mbox{\quad\quad\quad\,\,\,\quad\,\,\, otherwise},
\end{array}
\right. \\ \label{step2}
\tu_\delta(t,x)&:=&\left\{
\begin{array}{lll}
&\tu^{n+1}(y)\mbox{\quad\quad\ for $t\in[n\tau,n\tau+\tau)$, $x\in {C_h^+(y)}$, $y\in \Omega_h$},
\medskip\\
&0\mbox{\quad\quad\quad\,\,\,\quad\,\,\, otherwise},
\end{array}
\right. \\\label{step3}
w^i_\delta(t,x)&:=&\left\{
\begin{array}{lll}
&D_i^+\tu^{n+1}(y)\mbox{\quad for $t\in[n\tau,n\tau+\tau)$, $x\in {C_h^+(y)}$, $y\in \Omega_h$},
\medskip\\
&0\mbox{\quad\quad\quad\quad\quad\, otherwise}, 
\end{array}
\right.
\end{eqnarray}
where $n=0,1,\ldots,T_\tau-1$ and the notation $C^+_h(y)$ is seen in Lemma \ref{Lip-interpolation}.  
In the rest of our argument, the statement ``there exists a sequence $\delta\to0$ ...'' means ``there exists a sequence $\delta_l=(h_l,\tau_l)$ with $h_l,\tau_l\searrow0$ as $l\to\infty$ ...''. 
\begin{Thm}\label{weak convergence}
There exists a sequence $\delta\to0$ and a function $v\in L^2([0,T];H^1_{0,\sigma}(\Omega))$ for which  the following weak convergence holds: 
\begin{eqnarray}\label{511}
&&u_\delta \wto v \mbox{ \quad in $L^2([0,T];L^2(\Omega)^3)$ as $\delta\to0$},\\\label{512}
&&\tu_\delta \wto v \mbox{ \quad in $L^2([0,T];L^2(\Omega)^3)$ as $\delta\to0$},\\\label{523}
&&w^i_\delta \wto \partial_{x_i} v \mbox{ \quad in $L^2([0,T];L^2(\Omega)^3)$ as $\delta\to0$ ($i=1,2,3$)}.
\end{eqnarray}  
\end{Thm}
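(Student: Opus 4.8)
The plan is to extract weak limits from the uniform bounds of Theorem~\ref{L2-estimate} and then identify them. First I would observe that the estimates \eqref{410}--\eqref{413}, combined with the definitions \eqref{step1}--\eqref{step3} of the step functions, give uniform bounds (independent of $\delta$) for $\norm u_\delta\norm_{L^2([0,T];L^2(\Omega)^3)}$, $\norm \tu_\delta\norm_{L^2([0,T];L^2(\Omega)^3)}$ and $\sum_{i=1}^3\norm w^i_\delta\norm_{L^2([0,T];L^2(\Omega)^3)}$: indeed, the $L^2$-in-space bound at each time level follows from \eqref{412}, while the summability in time of $\sum_j\norm D_j^+\tu^{m+1}\norm_{\Omega_h}^2\tau$ follows by rearranging \eqref{413} together with \eqref{410} and \eqref{4100}. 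By weak compactness of bounded sets in the Hilbert space $L^2([0,T];L^2(\Omega)^3)$, there is a sequence $\delta\to0$ along which $u_\delta\wto v$, $\tu_\delta\wto \tu$ and $w^i_\delta\wto W^i$ for some limit functions $v,\tu,W^i$.

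Next I would show $\tu=v$. The difference $\tu_\delta-u_\delta$ can be controlled using \eqref{411}: on each time slab $[n\tau,n\tau+\tau)$ one has $\norm \tu^{n+1}-u^n\norm_{\Omega_h}\le\norm f^n\norm_{\Omega_h}\tau$, while $\norm u^{n+1}-u^n\norm_{\Omega_h}$ is controlled by the right-hand side of \eqref{fractional-1} tested against suitable quantities (or more cheaply, since we only need a limit statement, by combining the projection estimate of Theorem~\ref{22estimate}, $\norm u^{n+1}\norm_{\Omega_h}\le\norm\tu^{n+1}\norm_{\Omega_h}$, with the fractional-step bound). Integrating the square of $\norm\tu_\delta-u_\delta\norm$ in time and using $\sum_m\norm f^m\norm_{\Omega_h}^2\tau\le\norm f\norm_{L^2}^2$ shows $\tu_\delta-u_\delta\to0$ strongly (hence weakly) in $L^2([0,T];L^2(\Omega)^3)$ as $\delta\to0$, which forces $\tu=v$. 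The same reasoning applied to the shift in the time argument of the step functions is not needed here.

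Then I would identify $W^i=\partial_{x_i}v$ and show $v\in L^2([0,T];H^1_{0,\sigma}(\Omega))$. For this I use Lemma~\ref{Lip-interpolation}: for each fixed time level $n$, the function $\tu^{n+1}$ vanishes on $\partial\Omega_h$, so there is a Lipschitz function $\bW^{n+1}:\Omega\to\R^3$ with compact support such that $\norm \bW^{n+1}-\tu^{n+1}_{\rm step}\norm_{L^2(\Omega)}\le Kh\sum_j\norm D_j^+\tu^{n+1}\norm_{\Omega_h}$ and $\norm\partial_{x_i}\bW^{n+1}\norm_{L^2(\Omega)}\le K\sum_j\norm D_j^+\tu^{n+1}\norm_{\Omega_h}$. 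Assembling these into a time-dependent interpolant $\bW_\delta(t,\cdot):=\bW^{n+1}$ on $[n\tau,n\tau+\tau)$, the first inequality (integrated in time, using the uniform $L^2$-bound on $\sum_j\norm D_j^+\tu^{m+1}\norm_{\Omega_h}^2\tau$ and the factor $h^2$) gives $\bW_\delta-\tu_\delta\to0$ strongly in $L^2([0,T];L^2(\Omega)^3)$; the second gives a uniform $L^2([0,T];H^1(\Omega)^3)$-bound on $\bW_\delta$. Hence $\bW_\delta\wto v$ in $L^2L^2$ and, after passing to a further subsequence, $\bW_\delta\wto v$ in $L^2H^1$, so $v\in L^2([0,T];H^1_0(\Omega)^3)$; moreover $\partial_{x_i}\bW_\delta\wto\partial_{x_i}v$. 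It remains to match $\partial_{x_i}\bW_\delta$ with $w^i_\delta$: by the construction in Lemma~\ref{Lip-interpolation} (the Lipschitz interpolant is built so that its derivative is, up to a controlled $L^2$-small error, the piecewise-constant function with value $D_i^+\tu^{n+1}$), one has $\partial_{x_i}\bW_\delta-w^i_\delta\to0$ in $L^2([0,T];L^2(\Omega)^3)$, whence $w^i_\delta\wto\partial_{x_i}v$, giving \eqref{523}. Finally, the divergence-free condition: for each $n$, $\D\cdot u^n=0$ on $\Omega_h\setminus\partial\Omega_h$; testing against discrete gradients $\D\psi$ of smooth compactly supported $\psi$ and using Lemma~\ref{vector-cal1} together with the convergence of difference quotients of smooth test functions to classical derivatives, one passes to the limit to obtain $\nabla\cdot v=0$ in the sense of distributions, so $v\in L^2([0,T];H^1_{0,\sigma}(\Omega))$.

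\textbf{Main obstacle.} The delicate point is the interplay between the two different grids used for the potential part versus the velocity part — but that enters only through Theorem~\ref{22estimate}, which is already available; here the real work is the $H^1$-weak convergence \eqref{523}, i.e. verifying that the piecewise-constant difference quotients $w^i_\delta$ and the genuine weak derivatives $\partial_{x_i}\bW_\delta$ of the Lipschitz interpolants have the same weak limit. This requires knowing the precise construction in Lemma~\ref{Lip-interpolation} well enough to bound $\partial_{x_i}\bW_\delta-w^i_\delta$ in $L^2$; the $h$-factor in the first estimate of that lemma is what makes the spatial error terms vanish, while the second estimate (no $h$-factor) is exactly what is needed to keep $\bW_\delta$ bounded in $H^1$. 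Everything else is routine weak-compactness and passage to the limit in linear identities.
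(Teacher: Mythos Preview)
Your overall strategy --- uniform bounds from Theorem~\ref{L2-estimate}, weak compactness, and Lipschitz interpolation via Lemma~\ref{Lip-interpolation} to land in $H^1_{0,\sigma}$ --- is exactly the route the paper takes (by reference to \cite{Kuroki-Soga}). Steps 1 and 3 are essentially correct.

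The genuine gap is in Step 2, where you claim that \eqref{411} gives $\norm \tu^{n+1}-u^n\norm_{\Omega_h}\le\norm f^n\norm_{\Omega_h}\tau$. This is false: \eqref{411} is the \emph{norm} inequality $\norm\tu^{n+1}\norm\le\norm u^n\norm+\norm f^n\norm\tau$, not a bound on the difference, and there is no reason for $\tu^{n+1}-u^n$ to be small at fixed $n$ (the right-hand side of \eqref{fractional-1} contains $D_j^2\tu^{n+1}$, which is not uniformly controlled in $h$ without a scaling hypothesis). Consequently, strong convergence $\tu_\delta-u_\delta\to0$ in $L^2L^2$ does not follow from your argument.

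The correct route splits $\tu^{n+1}-u^n=(\tu^{n+1}-u^{n+1})+(u^{n+1}-u^n)$. For the first piece, orthogonality of the discrete Helmholtz--Hodge decomposition gives $\norm\tu^{n+1}-u^{n+1}\norm^2=\norm\tu^{n+1}\norm^2-\norm u^{n+1}\norm^2$; combining with \eqref{411} and telescoping yields $\sum_n\norm\tu^{n+1}-u^{n+1}\norm^2\le C$ independently of $\delta$, hence $\sum_n\norm\tu^{n+1}-u^{n+1}\norm^2\tau\le C\tau\to0$, so the projection defect vanishes strongly in $L^2L^2$. The second piece, the time shift between the step functions built on $u^{n+1}$ and on $u^n$, is \emph{not} small in $L^2L^2$ but does vanish \emph{weakly}: against a smooth test function one sums by parts in $n$ and uses the uniform $L^\infty_tL^2_x$ bound \eqref{412} together with the $O(\tau)$ smallness of the discrete time increment of the test function. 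Together this gives $\tu_\delta-u_\delta\rightharpoonup0$, hence $\tu=v$.

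A minor remark on Step 3: rather than asserting $\partial_{x_i}\bW_\delta-w^i_\delta\to0$ (which requires opening the construction behind Lemma~\ref{Lip-interpolation}), it is cleaner to identify $W^i=\partial_{x_i}v$ directly by discrete integration by parts: for smooth compactly supported $\psi$ one has $\sum_x D_i^+\tu^{n+1}(x)\psi(x)h^3=-\sum_x\tu^{n+1}(x)D_i^-\psi(x)h^3$ since $\tu^{n+1}|_{\partial\Omega_h}=0$, and $D_i^-\psi\to\partial_{x_i}\psi$ uniformly; pass to the limit using $\tu_\delta\rightharpoonup v$.
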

\begin{proof}
We may follow the proof of Theorem 5.1 of \cite{Kuroki-Soga} with Theorem \ref{L2-estimate} and Lemma \ref{Lip-interpolation}. 
\end{proof}
\begin{Thm}\label{strong-convergence}
Take $\delta=(h,\tau)\to0$ under the condition $ h^{4-\alpha}\le \tau$, where  $\alpha\in(0,2]$ is any constant. Then, the sequence $\{\tu_\delta\}$, which satisfies \eqref{512}, converges strongly to $v$  in $L^2([0,T];L^2(\Omega)^3)$ as $\delta\to0$.    
\end{Thm}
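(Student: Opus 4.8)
The plan is to upgrade the weak convergence $\tu_\delta\wto v$ in $L^2([0,T];L^2(\Omega)^3)$ to strong convergence by a compactness argument. The standard Aubin-Lions-Simon tool is unavailable (as noted in the introduction, and as was the situation in \cite{Kuroki-Soga}), so I would imitate the new compactness argument of \cite{Kuroki-Soga}, paying attention to the two new features here: the central difference in the convection and diffusion terms of \eqref{fractional-1}, and the modified discrete Helmholtz-Hodge decomposition. The key point to exploit is that the $L^2$-gradient bound from Theorem \ref{L2-estimate}, namely $\sum_{m=0}^{T_\tau-1}(\sum_{j=1}^3\|D_j^+\tu^{m+1}\|_{\Omega_h}^2)\tau$ is bounded uniformly in $\delta$, gives (via Lemma \ref{Lip-interpolation}) spatial equicontinuity of Lipschitz interpolants $\bW_\delta$ of $\tu_\delta$ in $L^2([0,T];L^2(\Omega)^3)$; the remaining ingredient is \emph{time}-equicontinuity in a suitable weak sense, which is where the scaling condition $h^{4-\alpha}\le\tau$ enters.

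The steps, in order, would be: (1) Introduce the Lipschitz-in-space interpolants $\bW_\delta$ of $\tu_\delta$ afforded by Lemma \ref{Lip-interpolation}; by \eqref{413} and \eqref{410}-\eqref{4100} these are bounded in $L^2([0,T];H^1_0(\Omega)^3)$ uniformly in $\delta$, and $\|\bW_\delta-\tu_\delta\|_{L^2([0,T];L^2(\Omega))}=O(h)\to0$, so it suffices to prove strong $L^2$-convergence of $\{\bW_\delta\}$. (2) Establish a quantitative time-translation estimate: for $s>0$, bound $\int_0^{T-s}\|\tu_\delta(t+s,\cdot)-\tu_\delta(t,\cdot)\|_{\Omega_h,\mathrm{type}}^2\,dt$ by testing the difference equation \eqref{fractional-1} against $\tu^{n+1}$-differences summed over the time steps between $t$ and $t+s$. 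The right-hand side of \eqref{fractional-1} consists of the convection term, the discrete Laplacian, and $f^n$, plus the Helmholtz-Hodge correction $u^n=P_h\tu^n$ hidden in the $u^n$ appearing on the left; after summation by parts (Lemma \ref{vector-cal1}) and Cauchy-Schwarz, each contributes a term controlled by the uniform bounds from Theorem \ref{L2-estimate}, \emph{except} for factors of negative powers of $h$ coming from the discrete second derivatives $D_j^2$ (order $h^{-2}$) and the convection term (order $h^{-1}$ times the $L^\infty$-bound, which itself may blow up like a negative power of $h$ via inverse inequalities). (3) Track these $h$-powers carefully to see that they are absorbed precisely when $\tau\ge h^{4-\alpha}$, yielding $\int_0^{T-s}\|\tu_\delta(t+s)-\tu_\delta(t)\|^2\,dt\le C\,\omega(s)+C\,\eta(\delta)$ with $\omega(s)\to0$ as $s\to0$ and $\eta(\delta)\to0$. (4) Combine the spatial equicontinuity of $\bW_\delta$ (uniform $H^1_0$-bound, hence compactness under space translations via Riesz-Fréchet-Kolmogorov) with the time-translation estimate from step (3) and the uniform $L^2$-bound to invoke the Fréchet-Kolmogorov criterion on $L^2([0,T]\times\Omega)$; extract a strongly convergent subsequence, and identify its limit with $v$ using the already-established weak convergence \eqref{512}. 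Since $v$ is uniquely the weak limit, the whole sequence converges.

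The main obstacle I expect is step (2)-(3): making the time-translation estimate quantitative enough to pin down the exponent $4-\alpha$. The diffusion term $\sum_j D_j^2\tu^{n+1}$ is the dangerous one — estimating $\langle D_j^2\tu^{n+1},\tu^{m+1}-\tu^{n+1}\rangle$ naively costs $h^{-2}$, and one needs to redistribute one difference onto the test function (summation by parts) so that only $h^{-1}\cdot h^{-1}=h^{-2}$ appears against $\|D_j^+\tu^{n+1}\|\cdot\|D_j^+(\tu^{m+1}-\tu^{n+1})\|$, each factor being $L^2([0,T])$-bounded in time-summed form but requiring an extra $\tau^{-1}$-type loss when passing from the time-summed gradient bound to pointwise-in-$n$ control; the bookkeeping of how many time steps lie in $[t,t+s]$ (about $s/\tau$ of them) versus the available $\sum_m\tau$-weighted bounds is exactly what forces $\tau$ to dominate a power of $h$. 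The convection term is similar but milder. Once the exponent is identified, matching it against $h^{4-\alpha}\le\tau$ should be a direct (if tedious) comparison. I would also double-check that the modified Helmholtz-Hodge projection $P_h$ does not spoil the estimate: by Theorem \ref{22estimate}, $\|P_h\tu^n\|_{\Omega_h}\le\|\tu^n\|_{\Omega_h}$, so $u^n$ inherits all the needed bounds, and the correction term $\tu^n-u^n=\D\phi^n$ is likewise controlled, so this part should go through routinely.
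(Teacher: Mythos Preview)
Your stated plan—imitate the compactness argument of \cite{Kuroki-Soga}, adjusting for the central difference and the new decomposition—is exactly what the paper does; its proof consists of one sentence referring to Lemma~6.1 and Theorem~6.2 of \cite{Kuroki-Soga}. So at that level you and the paper agree.

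Where you diverge is in your guess at what the Kuroki--Soga argument actually is. The paper's single substantive remark is that one must change the operator $Q_h$ appearing in the norm $\nnorm\cdot\nnorm_{\rm op}$ to $(Q_h\phi)_j:=\phi_j-\tfrac{h^2}{3!}\partial_{x_j}^2\phi_j$, because the discrete divergence is now built from the central difference. This strongly indicates that the compactness argument does \emph{not} proceed by testing \eqref{fractional-1} against solution differences $\tu^{n+k}-\tu^n$ and applying Fr\'echet--Kolmogorov in $L^2$, as you propose in step~(2). Rather, time-equicontinuity is measured in a dual norm $\nnorm\cdot\nnorm_{\rm op}$ obtained by pairing against \emph{smooth} divergence-free test functions $\phi$, corrected via $Q_h$ so that $\D\cdot(Q_h\phi)=\nabla\cdot\phi+O(h^4)=O(h^4)$. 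Testing against smooth $Q_h\phi$ is what makes the bookkeeping tractable: the diffusion term lands on $D_j^2\phi=O(1)$ after two summations by parts, and the projection correction $\tu^m-u^m=\D\phi^m$ contributes $(\D\phi^m,Q_h\phi)=-(\phi^m,\D\cdot Q_h\phi)=O(h^4)$ per step. The scaling $h^{4-\alpha}\le\tau$ then arises from accumulating these $O(h^4)$ contributions over $\sim 1/\tau$ steps, not from the mechanism you sketch in step~(3).

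Your alternative of testing against $\tu^{n+k}-\tu^n$ is not obviously hopeless, but it is a genuinely different route and the proposal as written does not substantiate the claim that the $h$-powers are ``absorbed precisely when $\tau\ge h^{4-\alpha}$''. The test function $\tu^{n+k}-\tu^n$ is neither smooth nor discrete-divergence-free, so the projection correction $\D\phi^m$ does not pair to something small, and the convection term forces you through an inverse inequality whose exponent you have not tracked. If you intend to follow \cite{Kuroki-Soga} as you say, you should locate the roles of $Q_h$ and $\nnorm\cdot\nnorm_{\rm op}$ there and carry out the modification the paper points to; that is the actual content of the proof.
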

\begin{proof}
We may follow the proofs of Lemma 6.1 and Theorem 6.2 of \cite{Kuroki-Soga}, where we slightly change $Q_h$ in $\nnorm\cdot\nnorm_{\rm op}$ to be  
$(Q_h\phi)_j:=\phi_j-\frac{h^2}{3!}\frac{\partial^2\phi_j}{\partial x_j^2}$ (note that we use the central difference for the discrete divergence). 
\end{proof}
\begin{Thm}\label{Leray-Hopf}
The limit function $v$ of $\{u_\delta\}$ and $\{\tilde{u}_\delta\}$ derived under  the condition $ h^{4-\alpha}\le \tau$ with $\alpha\in(0,2]$ is a Leray-Hopf weak solution of (\ref{NS}).
\end{Thm}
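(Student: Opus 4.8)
The goal is to pass to the limit in the discrete weak formulation of the scheme and recover the identity \eqref{weak-form-NS}. The plan is to fix a test function $\phi\in C^\infty_0((-1,T);C^\infty_{0,\sigma}(\Omega))$, sample it on the grid and at the discrete times as $\phi^n(x):=\phi(n\tau,x)$, multiply the discrete momentum equation \eqref{fractional-1} by $\phi^n(x)h^3$, sum over $x\in\Omega_h\setminus\partial\Omega_h$ and over $n=0,\ldots,T_\tau-1$, and then perform summation by parts in both space and time. The time-difference term $\sum_n(\tu^{n+1}-u^n,\phi^n)_{\Omega_h}$ is rearranged using Abel summation; here one inserts $u^{n+1}=P_h\tu^{n+1}$ and uses that $\phi^n$ is (discretely close to) divergence-free so that $(\D\Phi^{n},\phi^n)_{\Omega_h}$-type terms vanish up to controllable errors, turning the sum into a discrete analogue of $-\int\int v\cdot\partial_t\phi - \int v^0\cdot\phi(0,\cdot)$. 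The diffusion term $\sum_n\sum_j(D_j^2\tu^{n+1},\phi^n)_{\Omega_h}$ becomes $-\sum_n\sum_j(D_j^+\tu^{n+1},D_j^+\phi^n)_{\Omega_h}$ after summation by parts (Lemma \ref{vector-cal1}-type cancellations, with the boundary terms vanishing since $\phi$ has compact support in $\Omega$, so $\phi^n$ vanishes near $\partial\Omega_h$ for $h$ small). The nonlinear term is handled by rewriting the symmetric central-difference convection operator, shifting indices, and identifying the limit.

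Next I would pass to the limit term by term using Theorems \ref{weak convergence} and \ref{strong-convergence}. The linear terms (time derivative, diffusion, forcing) converge by the weak convergence \eqref{511}--\eqref{523} together with the strong $L^2$-convergence of the grid samples of the smooth $\phi$ to $\phi$ and of its discrete derivatives to the classical derivatives; the forcing term converges because $f^n$ is built from local averages of $f$, which converge to $f$ in $L^2([0,T];L^2(\Omega)^3)$. The nonlinear term is exactly where the strong convergence of $\{\tu_\delta\}$ in $L^2([0,T];L^2(\Omega)^3)$ from Theorem \ref{strong-convergence} is needed: a product of a weakly convergent factor ($w^i_\delta\wto\partial_{x_i}v$, or a central difference thereof) and a strongly convergent factor ($\tu_\delta\to v$, and $u_\delta\to v$ since $\|u_\delta-\tu_\delta\|\to0$) converges to the product of the limits. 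One must also check that replacing $u^n_j(x\pm he^j)D_j\tu^{n+1}(x\pm he^j)$ by the continuum quantity $v_jv_{x_j}$ costs only $o(1)$: the $\pm he^j$ shifts and the time shift between level $n$ and $n+1$ are absorbed because $\phi$ is smooth and the relevant norms are bounded uniformly in $\delta$ by Theorem \ref{L2-estimate}.

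Finally I would verify the two structural requirements in the definition of a Leray-Hopf weak solution that are not already delivered by Theorem \ref{weak convergence}. First, $v\in L^\infty([0,T];L^2_\sigma(\Omega))$: the $L^\infty_tL^2_x$ bound follows from \eqref{412} (uniform in $\delta$) plus lower semicontinuity, and the divergence-free and tangential-trace properties follow because each $u^n=P_h\tu^n$ satisfies $\D\cdot u^n=0$ on $\Omega_h\setminus\partial\Omega_h$ and $u^n=0$ on $\partial\Omega_h$, which in the limit gives $\nabla\cdot v=0$ and $v\in H^1_{0,\sigma}$ — the membership $v\in L^2([0,T];H^1_{0,\sigma}(\Omega))$ is already asserted in Theorem \ref{weak convergence} via the Lipschitz interpolation of Lemma \ref{Lip-interpolation}. (Indeed one should note that \eqref{523} together with $u_\delta\wto v$ forces the weak limit of $w^i_\delta$ to be the genuine weak derivative $\partial_{x_i}v$, by testing against smooth functions and summation by parts.) Collecting these, $v$ satisfies \eqref{weak-form-NS} for every admissible $\phi$, hence is a Leray-Hopf weak solution.

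\textbf{Main obstacle.} The delicate point is the convection term: one must show that the discretization error in the nonlinearity — coming from the one-step time lag between $u^n$ and $\tu^{n+1}$, the half-grid spatial shifts in the symmetric difference, and the use of $D_j$ rather than $D_j^+$ — vanishes in the limit, and that the product passes to the limit. This is precisely what forces the strong convergence in Theorem \ref{strong-convergence} (hence the scaling constraint $h^{4-\alpha}\le\tau$), and the bookkeeping of the boundary remainder terms from summation by parts (which vanish only because $\supp\phi\Subset\Omega$) must be done carefully; everything else is a routine, if lengthy, limit passage mirroring the argument in \cite{Kuroki-Soga}.
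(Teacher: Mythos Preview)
Your plan is correct and coincides with the paper's approach: the paper's own proof is simply ``We may follow the proof of Theorem 7.1 of \cite{Kuroki-Soga}'', and what you have sketched is precisely that argument --- test \eqref{fractional-1} against sampled $\phi$, perform summation by parts in space and Abel summation in time, and pass to the limit using Theorems \ref{weak convergence} and \ref{strong-convergence}, with the strong convergence of $\tu_\delta$ (and hence of $u_\delta$, via the energy identity which gives $\sum_n\norm u^n-\tu^{n+1}\norm_{\Omega_h}^2\le C$ and thus $\norm u_\delta-\tu_\delta\norm_{L^2L^2}^2\le C\tau\to0$) carrying the nonlinear term. Your identification of the convection term as the only place where the scaling hypothesis $h^{4-\alpha}\le\tau$ enters is also exactly the point of the referenced proof.
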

\begin{proof}
We may follow the proof of Theorem 7.1 of \cite{Kuroki-Soga}. 
\end{proof}
\subsection{Time-global solvability}
We sharpen Theorem \ref{L2-estimate} by taking the dissipative effect of $D_j^2$ into account and prove time-global solvability of the discrete Navier-Stokes equations under the assumption that there exists a constant $\alpha\ge0$ for which the external force $f\in L^2_{\rm loc}([0,\infty);L^2(\Omega)^3)$ satisfies 
$$\norm f \norm_{L^2([n-1,n];L^2(\Omega)^3)}\le \alpha\mbox{\quad for all $n\in\N$}.$$
A typical example of such $f$ is time-periodic one, which will be discussed in Section 4.  

Take $\tau=1/T_1$ with $T_1\in\N$. Define the set $\tilde{U}_R$ of initial data $\tilde{u}^0$ as 
$$\tilde{U}_R:=\{\tilde{u}:\Omega_h\to\R^3 \,|\,\,\,\,\norm \tu\norm_{\Omega_h}\le R, \quad \tilde{u}|_{\partial\Omega_h}=0  \},\quad R\ge0,$$ 
and the constant $R_0(\Omega,f)\ge0$ as 
\begin{eqnarray*}
R_0(\Omega,f):=\frac{1}{1-e^{-A^{-2}}}\Big(\frac{1-e^{-2A^{-2}}}{2A^{-2}}\Big)^\2\alpha,
\end{eqnarray*}
where $A>0$ is the constant from the discrete Poincar\'e type inequality I. Note that $A$ depends only on the diameter of $\Omega$.  
\begin{Thm}\label{nonperiodic-time-global}
Let $R\ge R_0(\Omega,f)$. Then, for each $\tu^0\in \tilde{U}_R$, the discrete Navier-Stokes equations \eqref{initial-1}-\eqref{fractional-3} are solvable for all $n\in\N$ and the solution satisfies $\tu^{mT_1}\in \tilde{U}_R$ for all $m\in\N$. 
\end{Thm}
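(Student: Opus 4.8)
The plan is to establish a discrete energy inequality of Gr\"onwall type for the quantity $\norm u^n\norm_{\Omega_h}$ over one unit time step (i.e.\ $n$ ranging over $\{mT_1, mT_1+1, \ldots, (m+1)T_1\}$), and then verify that the map $\tu^0\mapsto \tu^{T_1}$ sends $\tilde U_R$ into itself whenever $R\ge R_0(\Omega,f)$; time-global solvability then follows by induction on $m$ together with Theorem~\ref{implicit} (applied at each individual step, noting $P_h\tu^{n}$ is always discretely divergence-free and vanishes on $\partial\Omega_h$, so the hypothesis of Theorem~\ref{implicit} is propagated). First I would revisit the computation behind \eqref{413}: testing \eqref{fractional-1} against $\tu^{n+1}$ and using the antisymmetry of the convection term (which needs $\D\cdot u^n=0$, available since $u^n=P_h\tu^n$) plus the dissipation identity ${\rm (ii)}=-\sum_j\norm D_j^+\tu^{n+1}\norm_{\Omega_h}^2$, exactly as in the proof of Theorem~\ref{implicit}. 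This yields
\begin{eqnarray*}
\norm\tu^{n+1}\norm_{\Omega_h}^2+2\tau\sum_{j=1}^3\norm D_j^+\tu^{n+1}\norm_{\Omega_h}^2\le \norm u^n\norm_{\Omega_h}^2+2\tau(\tu^{n+1},f^n)_{\Omega_h}.
\end{eqnarray*}

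The crucial new ingredient, and the step I expect to be the main obstacle, is to convert the dissipation term into a genuine contraction factor using the discrete Poincar\'e inequality I (Lemma~\ref{Poincare}): since $\tu^{n+1}|_{\partial\Omega_h}=0$, we have $\norm\tu^{n+1}\norm_{\Omega_h}^2\le A^2\sum_{j=1}^3\norm D_j^+\tu^{n+1}\norm_{\Omega_h}^2$ (summing the three coordinate estimates and dividing, or just using one direction), so $2\tau\sum_j\norm D_j^+\tu^{n+1}\norm_{\Omega_h}^2\ge 2A^{-2}\tau\norm\tu^{n+1}\norm_{\Omega_h}^2$. Combined with $\norm u^{n+1}\norm_{\Omega_h}=\norm P_h\tu^{n+1}\norm_{\Omega_h}\le\norm\tu^{n+1}\norm_{\Omega_h}$ from Theorem~\ref{22estimate}, and Young's inequality on the forcing term, one gets a recursion of the form
\begin{eqnarray*}
(1+2A^{-2}\tau)\norm u^{n+1}\norm_{\Omega_h}^2\le (1+\epsilon\tau)\norm u^n\norm_{\Omega_h}^2+\epsilon^{-1}\tau\norm f^n\norm_{\Omega_h}^2
\end{eqnarray*}
for any $\epsilon>0$; the delicate point is choosing $\epsilon$ (one expects $\epsilon=A^{-2}$, matching the exponents $e^{-A^{-2}}$ and $e^{-2A^{-2}}$ appearing in $R_0$) and being careful that the term $\tau^2\norm f^n\norm_{\Omega_h}^2$ and the cross term are absorbed correctly, keeping all constants independent of $h,\tau$.

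Iterating the recursion from $n=mT_1$ to $n=(m+1)T_1-1$ (a total of $T_1$ steps, i.e.\ one unit of time, since $\tau T_1=1$), using $(1+2A^{-2}\tau)/(1+A^{-2}\tau)\le e^{-A^{-2}\tau}\cdot(1+o(1))$-type bounds — more precisely comparing the product $\prod(1+A^{-2}\tau)/(1+2A^{-2}\tau)$ to $e^{-A^{-2}}$ and the discrete sum $\sum_{m=0}^{T_1-1}\norm f^m\norm_{\Omega_h}^2\tau$ to $\norm f\norm_{L^2([mT_1\tau,(m+1)T_1\tau];L^2)}^2\le\alpha^2$ via \eqref{4100} — one arrives at
\begin{eqnarray*}
\norm\tu^{(m+1)T_1}\norm_{\Omega_h}\le \norm u^{(m+1)T_1}\norm_{\Omega_h}+0\le e^{-A^{-2}}\norm u^{mT_1}\norm_{\Omega_h}+\Big(\tfrac{1-e^{-2A^{-2}}}{2A^{-2}}\Big)^{1/2}\alpha
\end{eqnarray*}
(modulo getting the precise geometric-sum constant right). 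Since the fixed point of the contraction $r\mapsto e^{-A^{-2}}r+C\alpha$ is exactly $R_0(\Omega,f)=C\alpha/(1-e^{-A^{-2}})$, the ball $\tilde U_R$ with $R\ge R_0$ is invariant under $\tu^{mT_1}\mapsto\tu^{(m+1)T_1}$: if $\norm\tu^{mT_1}\norm_{\Omega_h}\le R$ then $\norm\tu^{(m+1)T_1}\norm_{\Omega_h}\le e^{-A^{-2}}R+C\alpha\le R$, and the boundary condition $\tu^{(m+1)T_1}|_{\partial\Omega_h}=0$ holds by \eqref{fractional-2}. This closes the induction and proves both the all-$n$ solvability and the claim $\tu^{mT_1}\in\tilde U_R$ for all $m$.
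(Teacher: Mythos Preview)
Your overall strategy is correct and matches the paper's: test \eqref{fractional-1} against $\tu^{n+1}$, kill the convection term via $\D\cdot u^n=0$, convert the dissipation to a contraction factor via Lemma~\ref{Poincare}, iterate over one unit of time, and observe that $R_0(\Omega,f)$ is the fixed point of the resulting affine map. The paper, however, carries this out at the level of the \emph{norm} rather than the \emph{norm squared}: from $(\tu^{n+1},\tu^{n+1})=(u^n,\tu^{n+1})-\tau\sum_j\norm D_j^+\tu^{n+1}\norm_{\Omega_h}^2+\tau(f^n,\tu^{n+1})$ one divides by $\norm\tu^{n+1}\norm_{\Omega_h}$ and uses Poincar\'e to get the clean linear recursion
\[
(1+3A^{-2}\tau)\,\norm\tu^{n+1}\norm_{\Omega_h}\le \norm u^n\norm_{\Omega_h}+\tau\norm f^n\norm_{\Omega_h},
\]
with no $\epsilon$-parameter at all. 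Iterating, then applying Cauchy--Schwarz to the sum $\sum_m e^{-A^{-2}(T_1-m)\tau}\norm f^m\norm_{\Omega_h}\tau$ and comparing the exponential Riemann sum to $\int_0^1 e^{-2A^{-2}(1-t)}dt$, produces exactly the constants in $R_0(\Omega,f)$.

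Your squared-norm route with Young's inequality is sound but will not land on the stated $R_0$. With $\epsilon=A^{-2}$ you obtain a contraction factor $e^{-A^{-2}}$ on $\norm u^{T_1}\norm_{\Omega_h}^2$, hence only $e^{-A^{-2}/2}$ on $\norm u^{T_1}\norm_{\Omega_h}$; and the forcing contribution becomes $A^2\alpha^2$ rather than $\big(\tfrac{1-e^{-2A^{-2}}}{2A^{-2}}\big)\alpha^2$. The resulting invariant radius is strictly larger than $R_0$, so you would prove a weaker statement. Also, a small slip: your displayed recursion puts the Young term $(1+\epsilon\tau)$ on $\norm u^n\norm_{\Omega_h}^2$, but Young applied to $2\tau(\tu^{n+1},f^n)$ naturally produces $\epsilon\tau\norm\tu^{n+1}\norm_{\Omega_h}^2$; this is fixable (move it to the left or invoke \eqref{411} first) but should be written correctly. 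To hit the theorem as stated, switch to the linear-in-norm argument.
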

\begin{proof}
 It follows from the equalities for (i), (ii) in the proof of Theorem \ref{implicit} and the discrete Poincar\'e type inequality I that the inner product of \eqref{fractional-1} and $\tilde{u}^{n+1}$ yields
\begin{eqnarray*}
\norm \tu^{n+1}\norm_{\Omega_h}&\le& \norm u^n\norm_{\Omega_h}+\norm f^n\norm_{\Omega_h}\tau-3(A^{-1})^2 \norm \tu^{n+1}\norm_{\Omega_h}\tau.
\end{eqnarray*}
Hence, we have 
\begin{eqnarray*}
\norm \tu^{n+1}\norm_{\Omega_h}&\le& \frac{1}{1+3(A^{-1})^2\tau}\norm u^n\norm_{\Omega_h}+\frac{1}{1+3(A^{-1})^2\tau}\norm f^n\norm_{\Omega_h}\tau\\
&\le&\Big(\frac{1}{1+3(A^{-1})^2\tau}\Big)^{n+1}\norm \tu^0 \norm_{\Omega_h} +\sum_{m=0}^{n}\Big(\frac{1}{1+3(A^{-1})^2\tau}\Big)^{n+1-m}\norm f^m\norm_{\Omega_h}\tau\\
&\le&e^{-(A^{-1})^2(n+1)\tau}\norm \tu^0\norm_{\Omega_h}+\sum_{m=0}^ne^{-(A^{-1})^2(n+1-m)\tau}\norm f^m\norm_{\Omega_h}\tau.
\end{eqnarray*}
Therefore, we obtain 
\begin{eqnarray*}
\norm \tu^{T_1}\norm_{\Omega_h}&\le& e^{-(A^{-1})^2}\norm \tu^0\norm_{\Omega_h}
+\left(\sum_{m=0}^{T_1-1}e^{-2(A^{-1})^2(1-m\tau)}\tau\right)^\2\left(\sum_{m=0}^{T_1-1}\norm f^m\norm_{\Omega_h}^2\tau\right)^\2\\
&\le&e^{-(A^{-1})^2}\norm \tu^0\norm_{\Omega_h}
+\Big(\int_0^1e^{-2(A^{-1})^2(1-t)} dt \Big)^\2\norm f\norm_{L^2([0,1];L^2(\Omega)^3)}
\\
&\le&e^{-(A^{-1})^2}R
+\Big(\frac{1-e^{-2(A^{-1})^2}}{2(A^{-1})^2}\Big)^\2\alpha.
\end{eqnarray*}
We see that 
\begin{eqnarray*}
&&R\ge R_0(\Omega,f)=\frac{1}{1-e^{-A^{-2}}}\Big(\frac{1-e^{-2A^{-2}}}{2A^{-2}}\Big)^\2\alpha
\quad\\
&&\qquad\qquad\qquad\qquad\qquad  \Leftrightarrow\quad  R\ge e^{-A^{-2}}R
+\Big(\frac{1-e^{-2A^{-2}}}{2A^{-2}}\Big)^\2\alpha. 
\end{eqnarray*}
Thus, if $\tu^0$ satisfies $\norm \tu^0\norm_{\Omega_h}\le R$ with  $R\ge R_0(\Omega,f)$, we have $\tilde{u}^{T_1}\in\tilde{U}_R$. 
We, then, repeat the same estimate to obtain $\tilde{u}^{2T_1}\in\tilde{U}_R$, $\tilde{u}^{3T_1}\in\tilde{U}_R$ and so on. 
\end{proof}
\begin{Thm}\label{time-global-Leray-Hopf}
Let $u_\delta,\tilde{u}_\delta:[0,\infty)\times\Omega\to\R^3$ be the step functions derived from \eqref{step1}, \eqref{step2} and Theorem \ref{nonperiodic-time-global}. There exists a sequence $\delta\to0$ for which $\{\tilde{u}_\delta\}$, $\{u_\delta\}$ tend to  a time-global  Leray-Hopf weak solution $v$ in $L_{\rm loc}^2([0,\infty),L^2(\Omega)^3)$, i.e., $\{\tilde{u}_\delta|_{t\in[0,T]}\}$ (resp. $\{u_\delta|_{t\in[0,T]}\}$) converges strongly (resp. weakly) to $v|_{t\in[0,T]}$ in $L^2([0,T],L^2(\Omega)^3)$ for any fixed $T>0$ as $\delta\to0$. 
\end{Thm}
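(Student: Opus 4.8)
The plan is to combine the uniform-in-time a priori bounds from Theorem \ref{nonperiodic-time-global} with the local-in-time compactness machinery already available from Theorems \ref{weak convergence}--\ref{Leray-Hopf}, and then upgrade a ``for each fixed $T$'' statement to a ``time-global'' one by a diagonal extraction. First I would fix an increasing sequence of integers $T^{(1)}<T^{(2)}<\cdots$ with $T^{(k)}\to\infty$. For the discretization parameter $\delta=(h,\tau)$ chosen with $\tau=1/T_1$ and $\tu^0\in\tilde U_R$ with $R\ge R_0(\Omega,f)$, Theorem \ref{nonperiodic-time-global} gives that $\tu^{mT_1}\in\tilde U_R$ for every $m$; feeding this bound back into the estimates of Theorem \ref{L2-estimate} applied on each unit interval $[m,m+1]$ (with ``initial data'' $\tu^{mT_1}$, whose norm is $\le R$, and external force of $L^2([m,m+1];L^2)$-norm $\le\alpha$) yields, for any fixed $T>0$, the bounds
\begin{eqnarray*}
\sup_{0\le n\le T/\tau}\norm u^n\norm_{\Omega_h}\le C(T),\qquad \sum_{n=0}^{T/\tau}\Big(\sum_{j=1}^3\norm D^+_j\tu^{n+1}\norm_{\Omega_h}^2\Big)\tau\le C(T),
\end{eqnarray*}
with $C(T)$ independent of $\delta$ — exactly the input needed for Theorems \ref{weak convergence} and \ref{strong-convergence} on $[0,T]$.

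Next I would run the compactness argument on $[0,T^{(1)}]$: Theorem \ref{weak convergence} extracts a subsequence $\delta^{(1)}_l\to0$ (which we may take inside the admissible scaling $h^{4-\alpha}\le\tau$, or simply with $\tau=1/T_1$ and $h$ suitably coupled) along which $u_\delta,\tu_\delta$ converge weakly and $w^i_\delta$ converge weakly to $\partial_{x_i}v$ in $L^2([0,T^{(1)}];L^2(\Omega)^3)$, and Theorem \ref{strong-convergence} improves the convergence of $\tu_\delta$ to strong in $L^2([0,T^{(1)}];L^2(\Omega)^3)$; Theorem \ref{Leray-Hopf} then identifies the limit $v^{(1)}$ as a Leray-Hopf weak solution of \eqref{NS} on $[0,T^{(1)}]$. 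I would then pass to a further subsequence $\delta^{(2)}_l\subset\delta^{(1)}_l$ on $[0,T^{(2)}]$, obtaining a limit $v^{(2)}$ on $[0,T^{(2)}]$; since $\delta^{(2)}_l$ is a subsequence of $\delta^{(1)}_l$, uniqueness of weak-$L^2$ limits forces $v^{(2)}|_{[0,T^{(1)}]}=v^{(1)}$. Iterating and taking the diagonal sequence $\delta_l:=\delta^{(l)}_l$, I define $v$ on $[0,\infty)$ by $v|_{[0,T^{(k)}]}:=v^{(k)}$; this is well-defined by the compatibility just noted, lies in $L^\infty([0,\infty);L^2_\sigma(\Omega))\cap L^2_{\rm loc}([0,\infty);H^1_{0,\sigma}(\Omega))$ because the bounds $C(T)$ are finite for each $T$, and along $\delta_l$ we have $\tu_{\delta_l}\to v$ strongly and $u_{\delta_l}\rightharpoonup v$ in $L^2([0,T];L^2(\Omega)^3)$ for every fixed $T$ (the tail of the diagonal sequence is eventually a subsequence of each $\delta^{(k)}_l$). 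That $v|_{[0,T]}$ satisfies \eqref{weak-form-NS} for each $T$ is precisely Theorem \ref{Leray-Hopf} applied on $[0,T^{(k)}]\supset[0,T]$, so $v$ is a time-global Leray-Hopf weak solution.

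The only genuinely new point beyond bookkeeping is verifying that the constants in the local estimates do not blow up as the time window grows — i.e., that Theorem \ref{nonperiodic-time-global}'s uniform-in-$m$ control $\tu^{mT_1}\in\tilde U_R$ really does feed through Theorem \ref{L2-estimate} to give $C(T)$ depending only on $T$ (in fact linearly, $C(T)\sim R^2+\alpha^2 T$), and not on $\delta$. This is where the dissipative sharpening in Theorem \ref{nonperiodic-time-global} is essential: without it the naive bound \eqref{412} would grow like $\sqrt T\norm f\norm_{L^2([0,T])}$, which is still finite for each fixed $T$, so strictly speaking the argument goes through using only \eqref{412}; the role of Theorem \ref{nonperiodic-time-global} is to keep the $L^\infty([0,\infty);L^2)$-bound genuinely uniform, which is what puts $v$ in $L^\infty([0,\infty);L^2_\sigma(\Omega))$ rather than merely $L^\infty_{\rm loc}$. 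I expect the main obstacle to be purely organizational: making sure the diagonal subsequence is chosen once and for all so that the same sequence $\delta_l\to0$ works simultaneously for all $T$, and checking the compatibility $v^{(k+1)}|_{[0,T^{(k)}]}=v^{(k)}$ carefully so that $v$ is unambiguously defined on all of $[0,\infty)$.
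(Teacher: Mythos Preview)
Your proposal is correct and follows essentially the same route as the paper: apply the finite-time convergence (Theorems \ref{weak convergence}--\ref{Leray-Hopf}) on $[0,m]$ for each $m\in\N$, extract nested subsequences, and take the Cantor diagonal sequence $\delta_{m,m}$. Your additional remarks on the role of Theorem \ref{nonperiodic-time-global} in securing the genuine $L^\infty([0,\infty);L^2_\sigma)$-bound (as opposed to merely $L^\infty_{\rm loc}$) and on the compatibility $v^{(k+1)}|_{[0,T^{(k)}]}=v^{(k)}$ are accurate and make explicit details the paper leaves implicit.
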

\begin{proof}
Standard  Cantor's diagonal argument yields the assertion. In fact, for each $m\in\N$, Theorem \ref{Leray-Hopf} implies that there exists a sequence $\{\delta_{m,l}\}_{l\in\N}$ with $\delta_{m,l}\to0$ as $l\to\infty$ such that $\{u_{\delta_{m,l}}|_{t\in[0,m]}\}_{l\in\N}$, $\{\tilde{u}_{\delta_{m,l}}|_{t\in[0,m]}\}_{l\in\N}$ converge to a Leray-Hopf weak solution defined in $[0,m]\times\Omega$.  
Then, we may subtract a subsequence $\{\delta_{m+1,l}\}_{l\in\N}$ from $\{\delta_{m,l}\}_{l\in\N}$ such that $\{u_{\delta_{m+1,l}}|_{t\in[0,m+1]}\}_{l\in\N}$, $\{\tilde{u}_{\delta_{m+1,l}}|_{t\in[0,m+1]}\}_{l\in\N}$ converge to a Leray-Hopf weak solution defined in $[0,m+1]\times\Omega$. Repeating this process for $m=1,2,\ldots$ and taking the sequence $\{\delta_{m,m}\}_{m\in\N}$, we obtain our assertion.
\end{proof}

\setcounter{section}{2}
\setcounter{equation}{0}
\section{Error estimate in $C^3$-class}
We give an error estimate for our projection method, {\it supposing that the  external force $f$ is smooth and that the limit $v$ of Theorem \ref{Leray-Hopf} belongs to the $C^3([0,T]\times\bar{\Omega})$-class with the pressure $p\in C^2([0,T]\times\bar{\Omega})$.} Note that a Leray-Hopf weak solution is smooth within a certain time interval, provided initial data and $\partial\Omega$ are smooth enough. The argument below itself does not require smoothness of $\partial\Omega$, and we proceed with the Lipschitz regularity of $\partial\Omega$ (we do not discuss if there is a special situation where a Lipschitz domain $\Omega$ yields a $C^3([0,T]\times\bar{\Omega})$-solution). 

Difficulty here is that $\partial\Omega_h$ is not contained in $\partial\Omega$; Hence, the exact solution $v$ does not satisfy the zero boundary condition on $\partial\Omega_h$; The calculus on $\Omega_h$ applied to $v|_{\Omega_h}$ leaves reminder terms coming from $v|_{\partial\Omega_h}=O(h)$.  Careful estimates of such reminder terms are  necessary. For this purpose, we assume that our Lipschitz domain $\Omega$ satisfies the following property:
\medskip

\noindent{\bf Condition A.}  
{\it There exist a constant $s_0>0$  and a family $\{V_k\}_{k=1,\cdots,K}$ of open subset of planes in $\R^3$ such that  
\begin{itemize}
\item Each $V_k$ is contained in $\Omega$ and has a normal vector $\omega_k\in \{\pm e^j\}_{j=1,2,3}$ such that for each $y\in V_k$,
\begin{eqnarray*}
\{y+s\omega_k\,|\,s\ge0\}\cap\partial\Omega=\mbox{\rm singleton}=\{ y+\varphi_k(y)\omega_k \},
\end{eqnarray*}
where $\varphi_k(y):V_k\to\R_{>0}$ is seen as a hight function between  $V_k$  and $\partial\Omega$,
\item $\varphi_k(y)\ge s_0$ for all $0\le k\le K$ and $y\in V_k$,
\item $\displaystyle\bigcup_{0\le k\le K} \{y+\varphi_k(y)\omega_k\,|\,y\in V_k  \}=\partial\Omega$.
\end{itemize}}

\medskip

\noindent Note that Condition A is fulfilled if $\partial\Omega$ is smooth; $\Omega$ being rectangular with $\partial\Omega$ orthogonal to $e^1$, $e^2$ or $e^3$ fails to satisfy Condition A (the edge is left over in the last condition), but we may directly deal with such an $\Omega$ through the reasoning in Subsection 3.1.    

   Our goal is to prove the next Theorem.
\begin{Thm}\label{error-estimate}
Suppose that Condition A holds. Suppose also that (\ref{NS}) with a smooth external force $f$ possesses the solution $(v,p)$ such that $v\in C^3([0,T]\times\bar{\Omega})$ and $p\in C^2([0,T]\times\bar{\Omega})$. Then, the solution $u^n,\tilde{u}^n$ to the discrete problem under the scaling condition  $\tau=\theta h^{\frac{3}{4}}$, $\theta\in[\theta_0, \theta_1]$ with fixed  constants $\theta_0,\theta_1>0$ satisfies 
$$\max_{0\le n\le T_\tau}\norm u^n-v(\tau n,\cdot)\norm_{\Omega_h}\le \beta^\ast h^{\frac{1}{4}},\quad \max_{0\le n\le T_\tau}\norm \tilde{u}^n-v(\tau n,\cdot)\norm_{\Omega_h}\le \beta^\ast h^{\frac{1}{4}}\mbox{\quad as $\tau,h\to0$},$$
where $\beta^\ast>0$
 is a constant independent of $\tau,h$ and $\theta$. 
 \end{Thm}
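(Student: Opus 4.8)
The plan is to run a discrete energy estimate on the error $E^n := u^n - v(\tau n,\cdot)$ restricted to $\Omega_h$, exactly as one would for a standard convergence proof of a projection method, but tracking carefully the consistency errors that arise because (a) the exact $v$ does not vanish on $\partial\Omega_h$, (b) the exact pressure $p$ is not divergence-cleaned by $P_h$ in the discrete sense, and (c) the projection step $u^{n+1}=P_h\tilde u^{n+1}$ introduces a splitting error governed by Theorem \ref{22estimate}. First I would plug $v(\tau n,\cdot)$ into the discrete scheme \eqref{fractional-1}--\eqref{fractional-2}: Taylor expansion in $t$ and $x$ (using $v\in C^3$, $p\in C^2$) shows that $v$ satisfies the fractional-step equation up to a residual $r^n$ with $\norm r^n\norm_{\Omega_h} = O(\tau) + O(h)$ on $\Omega_h\setminus\partial\Omega_h$, plus boundary contributions supported on $\partial\Omega_h$ where $v=O(h)$ by \eqref{distance1}. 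Subtracting the scheme for $u^n$ from this, I get a discrete evolution equation for $E^n$ whose structure mirrors \eqref{fractional-1}: a convective term, the dissipative $\sum_j D_j^2$ term, a forcing-type residual, and then a correction from the Helmholtz--Hodge projection. Taking the discrete inner product with $\tilde E^{n+1}$ (the analogue of $\tilde u^{n+1}$), I reuse verbatim the identities for (i) and (ii) from the proof of Theorem \ref{implicit}: (i) is controlled because $\D\cdot u^n = 0$ and $\D\cdot v$ is $O(h^2)$ smooth-consistent, so the convective term contributes only lower-order terms (here one also needs an $L^\infty$-bound on $u^n$, which comes from $v$ being bounded plus a bootstrap on $E^n$), and (ii) gives the good negative term $-\tau\sum_j\norm D_j^+\tilde E^{n+1}\norm_{\Omega_h}^2$ which absorbs first-order spatial errors via discrete Poincaré (Lemma \ref{Poincare}) and Young's inequality.

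The projection-step error is where the stated exponent $h^{1/4}$ and scaling $\tau = \theta h^{3/4}$ are forced. After the fractional step, $u^{n+1} = P_h\tilde u^{n+1}$, whereas $v(\tau(n+1),\cdot)$ is (approximately, modulo the pressure gradient) already discretely divergence-free only up to $O(h^2)$. The relevant quantity is $\norm \tilde u^{n+1} - u^{n+1}\norm_{\Omega_h} = \norm \D\phi^{n+1}\norm_{\Omega_h}$, and by \eqref{242424242} (or its variant without the vanishing hypothesis, promised in the remark after Theorem \ref{22estimate} and presumably proved in Section 3) this is bounded by $\tilde A$ times the discrete divergence of $\tilde u^{n+1}$. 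But $\D\cdot\tilde u^{n+1}$, via the fractional-step equation, equals $\tau \D\cdot(\Delta_h\tilde u^{n+1} + \text{conv} + f^n) $ relative to $u^n$, and crucially the boundary remainder from "summation by parts" — controlled by a trace-type inequality (the new idea advertised in the introduction, Section 3) — scales like $h^{-1/2}$ times the $L^2$-mass near $\partial\Omega_h$, i.e. like $h^{-1/2}\cdot h^{1/2} = O(1)$ when multiplied against the boundary values $v|_{\partial\Omega_h} = O(h)$, giving a per-step projection defect of size $O(h) \cdot (\text{something})/\tau$ after dividing through; balancing $\tau$ against $h$ in this term is what yields $\tau \sim h^{3/4}$ and a cumulative error $O(h^{1/4})$ over $O(1/\tau)$ steps. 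I would set up a Gronwall-type recursion
\begin{eqnarray*}
\norm E^{n+1}\norm_{\Omega_h}^2 \le (1+C\tau)\norm E^n\norm_{\Omega_h}^2 + C\tau\big(\tau + h^{1/2}\big)^2 - \tfrac{\tau}{2}\sum_{j=1}^3\norm D_j^+\tilde E^{n+1}\norm_{\Omega_h}^2 + (\text{proj. defect}),
\end{eqnarray*}
where the projection defect summed over $n\le T_\tau$ contributes $O(h^{1/2}/\tau)\cdot\tau = O(h^{1/2})$-type terms once the trace estimate and $\tau = \theta h^{3/4}$ are inserted — actually $O(\tau^{-1}h)\cdot$(boundary measure)$\cdot\tau$, which with the $h^{3/4}$ scaling is $O(h^{1/2})$ in the squared error, hence $O(h^{1/4})$ in norm. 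Then discrete Gronwall gives $\max_n\norm E^n\norm_{\Omega_h} \le \beta^\ast h^{1/4}$, and the bound for $\tilde u^n$ follows from \eqref{411} and the projection estimate $\norm\tilde u^{n+1}-u^{n+1}\norm_{\Omega_h}\le\tilde A\,\norm\D\cdot\tilde u^{n+1}\norm$, which is itself $O(h^{1/4})$ under the same balance.

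The main obstacle — and the technical heart of the argument — is controlling the boundary remainder terms from summation by parts in the discrete divergence of $\tilde u^{n+1}$ and in the convective and Laplacian terms when tested against $\phi^{n+1}$. Naively these remainders live on $\partial\Omega_h$, carry a factor $h^{-1}$ from the difference quotients, and one does not have pointwise control of $\D\phi^{n+1}$ on $\partial\Omega_h$ (as the remark after Theorem \ref{22estimate} stresses). The resolution is a discrete trace inequality: the $\ell^2$-mass of a grid function on a boundary layer of width $O(h)$ is bounded by $h^{1/2}$ times its discrete $H^1$-norm on $\Omega_h$, reminiscent of the continuous trace operator $H^1(\Omega)\to L^2(\partial\Omega)$. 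Establishing this on $\Omega_h$ under Condition A — which guarantees each boundary grid point is reached from an interior plane $V_k$ by a bounded-length axis-parallel path of grid points, so one can write $\phi$ on $\partial\Omega_h$ as a telescoping sum of $D_i^+\phi$ along that path and apply Cauchy--Schwarz — is the step I expect to require the most care, and it is exactly where the "construction of the trace operator" analogy mentioned in the introduction enters. Once that inequality is in hand, every boundary remainder is $\le Ch^{1/2}(\norm \D\tilde E^{n+1}\norm_{\Omega_h} + \norm\D v\norm_\infty)$, the bad $h^{-1}$ is killed, and the rest is bookkeeping with Young's inequality to absorb the gradient terms into the dissipation, followed by discrete Gronwall.
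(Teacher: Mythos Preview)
Your framework (energy estimate on the error, trace-type inequality for boundary remainders under Condition~A, Gronwall) matches the paper's, and you correctly identify that the projection/boundary defect forces both the scaling $\tau=\theta h^{3/4}$ and the rate $h^{1/4}$. But there is a genuine gap in how you handle the pressure residual, and it is fatal to the argument as written.

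When you plug $v$ into the fractional-step equation \eqref{fractional-1}, the residual is $R^n=-\tau\,\D p^n+O(\tau h)+O(\tau^2)$, so $\norm R^n\norm_{\Omega_h\setminus\partial\Omega_h}=O(\tau)$, not $O(\tau^2)$ or $O(\tau h^{1/2})$. A naive bound $|(R^n,\tilde b^{n+1})|\le O(\tau)\norm\tilde b^{n+1}\norm$ then feeds $O(\tau)$ per step into your recursion, and after $T/\tau$ steps you get $O(1)$---no convergence. Your displayed recursion with forcing $C\tau(\tau+h^{1/2})^2$ is not justified: nothing you have said explains why the $\tau\D p^n$ piece drops to that size. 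The paper's device here is essential and not in your outline: one does \emph{not} estimate $(\D p^n,\tilde b^{n+1})$ directly, but substitutes the error equation \eqref{bbbbb} for $\tilde b^{n+1}$ itself, obtaining $\tau(\D p^n,b^n)$ plus terms carrying an extra factor of $\tau$. The point is that $b^n=u^n-v^n$ has $\D\cdot b^n=-\D\cdot v^n=O(h^2)$ (since $u^n$ is discretely divergence-free), so after summation by parts $\tau(\D p^n,b^n)=O(\tau h)$, with boundary remainders again handled by the trace lemma. Without this substitution-and-orthogonality step the estimate does not close.

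Two smaller points. First, you do \emph{not} need an $L^\infty$-bound on $u^n$ via bootstrap, and none is available from the $L^2$ theory. The convective term (i) is handled exactly as in the proof of Theorem~\ref{implicit}: its bulk contribution vanishes by skew-symmetry under $\D\cdot u^n=0$, leaving only boundary remainders where one writes $u^n_j=b^n_j+v^n_j$, uses $|v^n|=O(h)$ near $\partial\Omega_h$, and controls the $b^n$ piece by Cauchy--Schwarz and the trace lemma (Lemma~\ref{B.V.}). Second, the projection defect enters more simply than you describe: the paper uses $\norm b^n\norm\le\norm\tilde b^n\norm+\norm v^n-P_hv^n\norm$ together with Lemma~\ref{div-smooth}, which gives $\norm v^n-P_hv^n\norm=O(h)$ directly; there is no need to estimate $\D\cdot\tilde u^{n+1}$. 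It is precisely this $O(h)$ per step, rewritten as $O(\tau h^{1/4})$ under $\tau=\theta h^{3/4}$, that produces the final $h^{1/4}$ after Gronwall.
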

Our strategy is the following: Let $v\in C^3([0,T]\times\bar{\Omega})$,  $p\in C^2([0,T]\times\bar{\Omega})$ satisfy  \eqref{NS} in the sense of classical solutions. For $n=0,1,\cdots,T_\tau$, define 
\begin{eqnarray*}
v^n(\cdot):=v(\tau n,\cdot),\quad p^n(\cdot):=p(\tau n,\cdot).
\end{eqnarray*}    
For each $x\in\Omega_h\setminus\partial\Omega_h$, set  
\begin{eqnarray*}
R^n(x)&:=&v^{n+1}(x)-\Big\{v^{n}(x)-\frac{\tau}{2}\sum_{j=1}^3\big(v^n_j(x-he^j)D_jv^{n+1}(x-he^j)\\
&&\quad +v^n_j(x+he^j)D_jv^{n+1}(x+he^j)\big) +\tau\sum_{j=1}^3D^2_jv^{n+1}(x)+\tau f^{n}(x)\Big\}.
\end{eqnarray*}
It follows from the Taylor expansion that 
\begin{eqnarray*}
&&v^{n+1}(x)-v^n(x)=\tau\partial_tv^n(x)+O(\tau^2);\\
&&\frac{\tau}{2}\sum_{j=1}^3\big(v^n_j(x-he^j)D_jv^{n+1}(x-he^j)
+v^n_j(x+he^j)D_jv^{n+1}(x+he^j)\big)\\
&&\quad=\frac{\tau}{2}\sum_{j=1}^3\big(v^n_j(x-he^j)D_jv^{n}(x-he^j)
+v^n_j(x+he^j)D_jv^{n}(x+he^j)\big)+O(\tau^2)\\
&&\quad=\frac{\tau}{2}\sum_{j=1}^3\Big\{ \Big(v^n_j(x)-\partial_{x_j}v_j(x)h+O(h^2)\Big)\Big(\partial_{x_j}v^{n}(x)-\frac{1}{2}\partial^2_{x_j}v^n(x)\cdot 2h+O(h^2)\Big)\\
&&\qquad +\Big(v^n_j(x)+\partial_{x_j}v_j(x)h+O(h^2)\Big)\Big(\partial_{x_j}v^{n}(x)+\frac{1}{2}\partial^2_{x_j}v^n(x)\cdot 2h+O(h^2)\Big)\Big\}+O(\tau^2)\\
&&\quad =\tau\sum_{j=1}^3v_j^n(x)\partial_{x_j}v^n(x)+O(\tau h^2) +O(\tau^2);\\
&&\tau\sum_{j=1}^3D^2_jv^{n+1}(x)=\tau\sum_{j=1}^3D^2_jv^{n}(x)+O(\tau^2)=\tau\sum_{j=1}^3\partial_{x_j}^2v^{n}(x)+O(\tau h)+O(\tau^2);\\
&&\tau f^n(x)=\tau f(\tau n,x)+O(\tau^2)+O(\tau h). 
\end{eqnarray*}
Hence, the exact Navier-Stokes equations imply that 
\begin{eqnarray*}
&&R^n(x)=-\tau \nabla p^n(x)+O(\tau h)+O(\tau^2)=-\tau \D p^n(x)+O(\tau h)+O(\tau^2)\quad \mbox{ on $\Omega_h\setminus\partial\Omega_h$}.
\end{eqnarray*}
 Let $u^n,\tilde{u}^{n+1}$ be the solution of  \eqref{initial-1}-\eqref{fractional-3} with $\tilde{u}^0$ given by $v^0=v(0,\cdot)$. Set 
$$b^n:=u^n-v^n,\quad \tilde{b}^{n}:=\tilde{u}^{n}-v^{n}.$$
Then, we have for $x\in\Omega_h\setminus\partial\Omega_h$,
\begin{eqnarray}\label{bbbbb}
&&\tilde{b}^{n+1}(x)
=b^n(x) \\\nonumber
&&\quad -\underline{ \frac{\tau}{2}\sum_{j=1}^3   
\Big(u^n_j(x-he^j)D_j\tilde{b}^{n+1}(x-he^j)+u^n_j(x+he^j)D_j\tilde{b}^{n+1}(x+he^j)\Big)}_{\rm (i)} \\\nonumber 
&&\quad-\underline{\frac{\tau}{2}\sum_{j=1}^3 \Big(b^n_j(x-he^j)D_jv^{n+1}(x-he^j)+b^n_j(x+he^j)D_jv^{n+1}(x+he^j) \Big)}_{\rm (ii)}\\\nonumber
&&\quad+\underline{\tau\sum_{j=1}^3D^2_j\tilde{b}^{n+1}(x)}_{\rm (iii)} -R^n(x). 
\end{eqnarray}
In order to have a recurrence inequality of the norm of $\tilde{b}^n$ with respect to $n$ from \eqref{bbbbb}, we need the estimate  
\begin{eqnarray}\label{ideaidea}
\norm b^n\norm_{\Omegain}&=&\norm P_h(\tilde{u}^n-v^n)+P_h v^n-v^n\norm_{\Omegain} \\\nonumber
&\le& \norm \tilde{b}^n\norm_{\Omegain}+ \norm v^n-P_hv^n\norm_{\Omegain}.
\end{eqnarray}
The term $ \norm v^n-P_hv^n\norm_{\Omegain}$ must be treated as small increment of error within $\tau$ even though it does not contain $\tau$; namely, we have to  take $\tau$ out of this term with an appropriate scaling condition. 

We remark that {\it in the rest of this section, the discrete differential operators $D_j^+,D_j,\D$ operate on $v^n|_{\Omega_h}$ and $p^n|_{\Omega_h}$ in \eqref{bbbbb} without the $0$-extension outside $\Omega_h$, while $P_h$ operates on $v^n|_{\Omega_h}$ with the $0$-extension outside $\Omega_h$.} 

We will demonstrate $L^2$-estimates of \eqref{bbbbb} and \eqref{ideaidea}, where {\it we must take  care of remainder terms of ``summation by parts'' coming from $b^n|_{\partial\Omega_h}=-v^n|_{\partial\Omega_h}\neq0$ and $\tilde{b}^n|_{\partial\Omega_h}=-v^n|_{\partial\Omega_h}\neq0$.}   For this purpose, we prepare several lemmas below. Note that, if $v$ and $f$ have more regularity, $O(\tau h)$ in $R^n(x)$ can be $O(\tau h^2)$, which is essential in the problem with the periodic boundary conditions for a shaper error estimate (see Section 5 and \cite{Chorin}). In the Dirichlet problem, however, $O(\tau h^2)$ is not necessary in $R^n(x)$, because \eqref{ideaidea} gives  lower order error.  
\subsection{Estimates on  boundary}

We show that $\norm v^n-P_hv^n\norm_{\Omegain}$ is of $O(h)$ at best in general. Then, we must take $\tau$ out of  $O(h)$ with a scaling condition in accordance with the other remainder terms. We will see that the appropriate scaling is $\tau=O(h^{\frac{3}{4}})$, which implies $O(h)=\tau O(h^\frac{1}{4})$. 
 One can say that  {\it convergence rate of the fully discrete projection method with the Dirichlet boundary is governed by the estimate of  $\norm v^n-P_hv^n\norm_{\Omegain}$ in  \eqref{bbbbb} and \eqref{ideaidea}.} 
 Our argument requires several estimates on/near the  boundary of $\Omega_h$, which is reminiscent of the construction of the trace operator on $H^1(\Omega)$.      

It is useful to observe that we have a constant $\beta>0$ such that 
$$(\sharp\Gamma_h^{j\pm})h^2,(\sharp\tilde{\Gamma}_h^{j\pm})h^2,(\sharp\partial\Omega_h)h^2\le\beta\mbox{\quad as $h\to0$}.$$
Let $\chi_A$ be the indicator function supported by $A$. We sometimes use calculation like  
\begin{eqnarray}\label{jiro1}
&&\sum_{x\in\partial\Omega_h}|u(x)|h^3\le \beta\max_{x\in\partial\Omega_h}|u(x)|h,\\\label{jiro2}
&&\sum_{x\in\partial\Omega_h}|u(x)|h^3=\sum_{x\in\Omega_h}\chi_{\partial\Omega_h}(x)|u(x)|h^3\le\norm \chi_{\partial\Omega_h}\norm_{\Omega_h}\norm u\norm_{\Omega_h}=O(h^\2)\norm u\norm_{\Omega_h}.
\end{eqnarray}
\indent We prepare several lemmas. 
\begin{Lemma}\label{lem-divdivdiv}
There exist constants $a,b>0$ depending only on $\Omega$ for which each function $u:\Omega_h\to\R^3$ satisfies the estimate
\begin{eqnarray*}
&&\norm u-P_hu\norm_{\Omega_h\setminus\partial\Omega_h}\le a\norm \D\cdot u\norm_{\Omega_h^\circ}+b\max_{x\in \tilde{B}_h}|u(x)|,\\
&&\tilde{B}_h:=(\Omega_h\setminus\Omega_h^{\circ})\cup \partial\Omega_h^{\circ}\mbox{\rm \quad(see Subsection 2.1 for the definition of $\Omega_h^\circ$)}.
\end{eqnarray*}
\end{Lemma}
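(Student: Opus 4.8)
The plan is to imitate the proof of \eqref{242424242}, but now to keep every remainder term produced by ``summation by parts'' and to absorb it by means of a discrete trace inequality for the potential $\phi$. Write $u=P_hu+\D\phi$ as in Theorem \ref{Projection} and put $w:=P_hu$. Since $w|_{\partial\Omega_h}=0$ and $\D\cdot w\equiv0$ on $\Omega_h$, Lemma \ref{vector-cal1} gives $\sum_{x\in\Omega_h\setminus\partial\Omega_h}\D\phi(x)\cdot w(x)\,h^3=\sum_{x\in\Omega_h}\D\phi(x)\cdot w(x)\,h^3=-\sum_{x\in\Omega_h}(\D\cdot w(x))\phi(x)\,h^3=0$, so from $u-P_hu=\D\phi$ on $\Omega_h\setminus\partial\Omega_h$ we obtain
\[
\norm u-P_hu\norm_{\Omega_h\setminus\partial\Omega_h}^2=\norm\D\phi\norm_{\Omega_h\setminus\partial\Omega_h}^2=\sum_{x\in\Omega_h\setminus\partial\Omega_h}\D\phi(x)\cdot u(x)\,h^3 .
\]
I would then split $\Omega_h\setminus\partial\Omega_h=\Omega_h^\circ\cup L_h$ with $L_h:=(\Omega_h\setminus\partial\Omega_h)\setminus\Omega_h^\circ\subset\tilde B_h$. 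Because $L_h$ lies within $O(h)$ of $\partial\Omega$ (by \eqref{distance1}--\eqref{distance2}), the boundary-layer counting (cf.\ the observation with the constant $\beta$ preceding the lemma) gives $\sharp L_h\cdot h^2=O(1)$, so Cauchy--Schwarz yields $\big|\sum_{x\in L_h}\D\phi(x)\cdot u(x)\,h^3\big|\le O(\sqrt h)\,\norm\D\phi\norm_{\Omega_h\setminus\partial\Omega_h}\max_{x\in\tilde B_h}|u(x)|$, which is harmless after Young's inequality.

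For the sum over $\Omega_h^\circ$ I would carry out ``summation by parts'' direction by direction, line by line, as in Lemma \ref{vector-cal1}. Over a maximal segment $\{y_0,\dots,y_N\}$ of $\Omega_h^\circ$ parallel to $e^k$ this produces the bulk term $-\sum_{l}\phi(y_l)(D_ku_k)(y_l)$ plus the four endpoint terms $\tfrac1{2h}\big(\phi(y_{N+1})u_k(y_N)+\phi(y_N)u_k(y_{N+1})-\phi(y_0)u_k(y_{-1})-\phi(y_{-1})u_k(y_0)\big)$. The decisive point of using $\Omega_h^\circ$ (with its asymmetric definition) is that the segment endpoints satisfy $y_0,y_N\in\partial\Omega_h^\circ$ while the adjacent exterior points satisfy $y_{-1},y_{N+1}\in\Omega_h\setminus\Omega_h^\circ$; hence in each endpoint term \emph{both} arguments lie in $\tilde B_h=(\Omega_h\setminus\Omega_h^\circ)\cup\partial\Omega_h^\circ$. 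Summing the bulk terms over all segments and over $k$, and applying Cauchy--Schwarz together with the discrete Poincar\'e type inequality II (Lemma \ref{Poincare2}), in which the mean-zero normalization of $\phi$ forces $[\phi]^j=0$, yields
\[
\Big|\sum_{x\in\Omega_h^\circ}\phi(x)\,\D\cdot u(x)\,h^3\Big|\le\norm\phi\norm_{\Omega_h^\circ}\,\norm\D\cdot u\norm_{\Omega_h^\circ}\le\tilde A\,\norm\D\cdot u\norm_{\Omega_h^\circ}\,\norm\D\phi\norm_{\Omega_h\setminus\partial\Omega_h}.
\]

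It remains to estimate the collected remainder $R=\tfrac12 h^2\sum_{p\in S}\pm\phi(p)\,u_k(q(p))$, where $q(p)\in\tilde B_h$ and $S\subset\tilde B_h$ is a ``two-dimensional'' layer within $O(h)$ of $\partial\Omega$ with $\sharp S\cdot h^2=O(1)$. Using $|u_k(q(p))|\le\max_{\tilde B_h}|u|$ and Cauchy--Schwarz, $|R|\le\tfrac12 h^2\max_{x\in\tilde B_h}|u(x)|\,\sqrt{\sharp S}\,\big(\sum_{p\in S}|\phi(p)|^2\big)^{1/2}$, and the argument closes provided one has a \emph{discrete trace inequality} $\sum_{x\in S}|\phi(x)|^2h^2\le C\,\norm\D\phi\norm_{\Omega_h\setminus\partial\Omega_h}^2$ with $C=C(\Omega)$; granting it, $|R|\le C'\max_{\tilde B_h}|u|\,\norm\D\phi\norm_{\Omega_h\setminus\partial\Omega_h}$. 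Collecting the three estimates and using Young's inequality to absorb the $\norm\D\phi\norm^2$-contributions into the left-hand side, we get $X^2\le 2\tilde A\,\norm\D\cdot u\norm_{\Omega_h^\circ}\,X+C''\big(\max_{\tilde B_h}|u|\big)^2$ for $X:=\norm\D\phi\norm_{\Omega_h\setminus\partial\Omega_h}=\norm u-P_hu\norm_{\Omega_h\setminus\partial\Omega_h}$, hence $X\le 2\tilde A\,\norm\D\cdot u\norm_{\Omega_h^\circ}+\sqrt{C''}\max_{\tilde B_h}|u|$, which is the claim with $a=2\tilde A$, $b=\sqrt{C''}$.

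The main obstacle is precisely the discrete trace inequality --- the ``new idea'' advertised before the lemma. I would establish it in the spirit of the trace theorem: for $x\in S$, travel into the thick interior $\Omega_h^{\circ j}$ of the sublattice $G^j$ containing $x$ along a path built from $\pm2he^i$-steps, so that each increment of $\phi$ equals $2h$ times a central difference $D_i\phi$ at the midpoint (the detour through $2h$-steps is forced because $\D\phi\equiv0$ does not make $\phi$ constant), then average over the length of the path --- here the geometry of $\Omega$ near $\partial\Omega$, the connectedness of $\Omega_h^{\circ j}$ and the smallness of $h$ guarantee paths of comparable, bounded length --- and finally control the resulting ``deep'' $L^2$-mass of $\phi$ by Lemma \ref{Poincare2}. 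An alternative route is to feed the Lipschitz interpolant of $\phi|_{\Omega_h^{\circ j}}$ provided by Lemma \ref{Lip-interpolation2} into the continuous trace inequality on $\Theta_h^j$, whose Lipschitz character is uniform in $h$. In either case the delicate points are the $G^j$/checkerboard structure and the bookkeeping near $\partial\Omega_h^\circ$ that places the $u$-arguments of the summation-by-parts remainder inside $\tilde B_h$; once these are handled, the remaining computations are routine.
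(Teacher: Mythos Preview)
Your approach is correct and matches the paper's: split $\sum_{\Omega_h\setminus\partial\Omega_h}u\cdot\D\phi\,h^3$ into the thin layer $L_h$ and $\Omega_h^\circ$, sum by parts on $\Omega_h^\circ$, control the bulk term via Poincar\'e~II, and handle the boundary remainder by a discrete trace estimate for $\phi$. For the trace step the paper uses precisely your route~(a), implemented through Condition~A (which supplies, for each boundary point, a fixed inward coordinate direction $\omega_k$ along a segment of length $\ge s_0$) together with a cutoff $\gamma:[0,s_0]\to[0,1]$ used to telescope $\phi(x)=\phi(x)\gamma(0)$ into interior values in $2h$-steps; note also that your final Young step is unnecessary, since every term you produce is already linear in $\norm\D\phi\norm_{\Omega_h\setminus\partial\Omega_h}$ and one may simply divide through.
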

\begin{proof}
There exists $\phi:\Omega_h\to\R$ with the zero mean such that $u=P_hu+\D \phi$ on $\Omega_h\setminus\partial\Omega_h$. We will apply the Poincar\'e type inequality II to $\phi$ (that is why $\Omega_h^\circ$ is involved).  With discrete divergence free constraint of $P_hu$, we have   
\begin{eqnarray}\label{divdivdiv}
&&\norm u-P_hu\norm_{\Omega_h\setminus\partial\Omega_h}^2=\norm u-P_hu\norm_{\Omega_h\setminus\partial\Omega_h}\norm \D\phi\norm_{\Omega_h\setminus\partial\Omega_h}\\ \nonumber
&&\qquad=\sum_{x\in \Omega_h\setminus\partial\Omega_h} (u(x)-P_hu(x))\cdot \D\phi(x)h^3\\ \nonumber
&&\qquad=\sum_{x\in \Omega_h\setminus\partial\Omega_h} u(x)\cdot \D\phi(x)h^3\\\nonumber 
&&\qquad=\sum_{x\in (\Omega_h\setminus\partial\Omega_h)\setminus\Omega_h^{\circ}} u(x)\cdot \D\phi(x)h^3+\sum_{x\in \Omega_h^{\circ}} u(x)\cdot \D\phi(x)h^3\\\nonumber
&&\qquad\le\max_{\tilde{B}_h}|u(x)|  \norm \D\phi\norm_{\Omega_h\setminus\partial\Omega_h}+ \sum_{x\in \Omega_h^{\circ}} u(x)\cdot \D\phi(x)h^3.
\end{eqnarray}
Set 
\begin{eqnarray*}
&&\tilde{\Gamma}_h^{\circ j+}:= \{x\in\partial\Omega_h^\circ\,|\,x+he^j\not\in\Omega_h^\circ,\,\,\, x-he^j\in\Omega_h^\circ  \},\\ 
&&\tilde{\Gamma}_h^{\circ j-}:= \{x\in\partial\Omega_h^\circ\,|\,x-he^j\not\in\Omega_h^\circ,\,\,\,x+he^j\in\Omega_h^\circ\},\\
&&\tilde{\Gamma}_h^{\circ j}:= \{x\in\partial\Omega_h^\circ\,|\,x\pm he^j\not\in\Omega_h^\circ\}.   
\end{eqnarray*}
With  the Poincar\'e type inequality II, we have  
\begin{eqnarray}\label{divdivdiv2}
&&\sum_{x\in \Omega_h^\circ} u(x)\cdot \D\phi(x)h^3
= -\sum_{x\in \Omega_h^\circ} \D\cdot u(x)\phi(x)h^3\\\nonumber 
&&\qquad + \sum_{j=1}^3\Big\{\sum_{x\in \tilde{\Gamma}_h^{\circ j+}}u_j(x)\phi(x+he^j) -\sum_{x\in \tilde{\Gamma}_h^{\circ j-}}u_j(x-he^j)\phi(x) \\ \nonumber
&&\qquad\qquad  -\sum_{x\in \tilde{\Gamma}_h^{\circ j-}}u_j(x)\phi(x-he^j)
+\sum_{x\in \tilde{\Gamma}_h^{\circ j+}}u_j(x+he^j)\phi(x) \Big\}\frac{h^3}{2h}  \\\nonumber
&&\qquad  +\sum_{j=1}^3\sum_{x\in \tilde{\Gamma}_h^{\circ j}}u_j(x) D_j\phi(x)h^3\\\nonumber
&&\le  \norm \D\cdot u\norm_{\Omega_h^\circ}\norm \phi \norm_{\Omega_h^\circ}
+\max_{\tilde{B}_h}|u(x)|\sum_{j=1}^3\Big\{
 \sum_{x\in \tilde{\Gamma}_h^{\circ j+}} (|\phi(x+he^j)| +|\phi(x)|)        \\ \nonumber
&&\qquad +\sum_{x\in \tilde{\Gamma}_h^{\circ j-}}(|\phi(x-he^j)|+|\phi(x)|)
\Big\}\frac{h^3}{2h} + 3 \max_{\tilde{B}_h}|u(x)|  \beta h^\2\norm \D\phi\norm_{\Omega_h\setminus\partial\Omega_h}    \\    \nonumber
&&\le  \tilde{A}\norm \D\cdot u\norm_{\Omega_h^\circ }\norm \D\phi \norm_{\Omega_h\setminus\partial\Omega_h} +3 \max_{\tilde{B}_h}|u(x)|  \beta h^\2\norm \D\phi\norm_{\Omega_h\setminus\partial\Omega_h}+r,\end{eqnarray}
\begin{eqnarray*}
\quad r=\max_{\tilde{B}_h}|u(x)|\sum_{j=1}^3\Big\{
 \sum_{x\in \tilde{\Gamma}_h^{\circ j+}} (|\phi(x+he^j)| +|\phi(x)|)      
+\sum_{x\in \tilde{\Gamma}_h^{\circ j-}}(|\phi(x-he^j)|+|\phi(x)|)
\Big\}\frac{h^3}{2h}.  
\end{eqnarray*}
We estimate the terms in $\{\,\,\,\}$ of $r$, where \eqref{jiro1}, \eqref{jiro2} are not available because $\phi$ is not estimated in $L^\infty$ and  \eqref{jiro2} leaves $O(h^{-\2})$.  Take a smooth function $\gamma(s):[0,s_0]\to[0,1]$ such that 
\begin{eqnarray*}
\gamma(s)&=&\left\{
\begin{array}{lll}
&1,\qquad s\in[0,\frac{s_0}{3}],
\medskip\\
&0, \qquad s\in[\frac{2s_0}{3},s_0],
\end{array}
\right.
\end{eqnarray*}    
where $s_0$ is the one in Condition A. Now we use Condition A. Since $V_k$ ($k=1,\cdots,K$) are open subsets of planes, we still have the statements of Condition A with $\{V_k-\ep\}_{k=1,\ldots, K}$ instead of $\{V_k\}_{k=1,\ldots, K}$ for some $\ep>0$, where $V_k-\ep$ stands for the set $V_k\setminus\{ x\in\R^3\,|\,|x-y|<\ep, y\in\partial V_k \}$. Define 
$$B_k^\ep:=\{ y+s \omega_k\,|\, y\in (V_k-\ep),\,\,\,s\in[0,\varphi_k(y)] \}.$$
For $h\ll\ep$, we have the following estimate: Fix $l^\ast\in\N$ such that $2l^\ast h\in [\frac{2s_0}{3},s_0]$;  For each $x\in\tilde{\Gamma}_h^{\circ j+}\cap B_k^\ep$, we have 
\begin{eqnarray*}
\phi(x)&=&\phi(x)\gamma(0),\\
&=&\phi(x)\gamma(0)-\phi(x+2h\omega_k)\gamma(2h)
+\phi(x+2h\omega_k)\gamma(2h)-\phi(x+4h\omega_k )\gamma(4h)
+\cdots\\
&&+\phi(x+2(l^\ast-1)h\omega_k )\gamma(2(l^\ast-1)h)-\phi(x+2l^\ast h\omega_k )\gamma(2h)\\
&=&\sum_{l=1}^{l^\ast}\Big( \frac{\phi(x+2 (l-1)h\omega_k)-\phi(x+2 lh\omega_k)}{2h}\gamma(2(l-1)h)\\
&&+  \phi(x+2lh\omega_k)\frac{\gamma(2(l-1)h)-\gamma(2lh)}{2h}  \Big)\cdot2h,\\
|\phi(x)|&\le& \sum_{l=1}^{l^\ast} \Big(|\D\phi(x+(2l-1)h\omega_k)|+\beta_1|\phi(x+2lh\omega_k)|\Big)\cdot2h,
\end{eqnarray*}
where $\beta_1=\max|\gamma'|$. Note that $x+2lh\omega_k\in\Omega_h^\circ$ for all $l=1,\ldots,l^\ast$ because of $V_k-\varepsilon$ instead of $V_k$.  Hence, we see that  
\begin{eqnarray*}
\sum_{x\in \tilde{\Gamma}_h^{\circ j+}\cap B_k^\ep } |\phi(x)| \frac{h^3}{2h}&\le& 
\sum_{x\in\Omega_h\setminus\partial\Omega_h} |\D\phi(x)|h^3+ \sum_{x\in\Omega_h^{\circ}}\beta_1|\phi(x)|h^3\\
&\le& 
 \beta_2\norm \D\phi\norm_{\Omega_h\setminus\partial\Omega_h}+\beta_3 \norm \phi\norm_{\Omega_h^\circ}\\
 &\le&  \beta_2\norm \D\phi\norm_{\Omega_h\setminus\partial\Omega_h}+\beta_3 \tilde{A}\norm \D\phi\norm_{\Omega_h\setminus\partial\Omega_h},\\
\sum_{x\in \tilde{\Gamma}_h^{\circ j+}} |\phi(x)|\frac{h^3}{2h} 
& \le&  K(\beta_2+\beta_3\tilde{A})\norm \D\phi\norm_{\Omega_h\setminus\partial\Omega_h}.
\end{eqnarray*}
For $x\in\tilde{\Gamma}_h^{\circ j+}\cap B_k^\ep$, we have 
\begin{eqnarray*}
&&\sum_{x\in\tilde{\Gamma}_h^{\circ j+}}|\phi(x+he^j)|\frac{h^3}{2h}=\sum_{x\in\tilde{\Gamma}_h^{\circ j+}}|D_j\phi(x)|h^3
+\sum_{x\in\tilde{\Gamma}_h^{\circ j+}}|\phi(x-he^j)|\frac{h^3}{2h}\\
&&\quad\le \beta h^\2\norm \D\phi\norm_{\Omega_h\setminus\partial\Omega_h}+\sum_{x\in\tilde{\Gamma}_h^{\circ j+}}|\phi(x-he^j)|\frac{h^3}{2h},\quad x-he^j\in\Omega_h^\circ.
\end{eqnarray*}
We have the same estimate for $|\phi(x-he^j)|$.
In this way, we obtain 
\begin{eqnarray}\label{divdivdiv3}
r\le \beta_4\max_{\tilde{B}_h}|u(x)|\norm \D\phi\norm_{\Omega_h\setminus\partial\Omega_h}
\end{eqnarray}
with some constant $\beta_4>0$. \eqref{divdivdiv}, \eqref{divdivdiv2} and \eqref{divdivdiv3} conclude the proof. 
\end{proof}  
\begin{Lemma}\label{div-smooth}
For each function $u:\Omega\to\R^3$ such that $u\in C^2(\bar{\Omega})$, $u|_{\partial\Omega}=0$ and $\nabla\cdot u=0$, there exists a constant $\beta>0$ independent of $h$ for which we have 
$$\norm u-P_hu\norm_{\Omega_h\setminus\partial\Omega_h} \le \beta h,$$
where $P_hu=P_h(u|_{\Omega_h})$ with the $0$-extension of $u|_{\Omega_h}$ outside $\Omega_h$.  
\end{Lemma}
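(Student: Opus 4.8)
\textbf{Proof proposal for Lemma \ref{div-smooth}.}
The plan is to reduce the estimate for a smooth divergence-free field vanishing on $\partial\Omega$ to the abstract bound of Lemma \ref{lem-divdivdiv}, and then to control each of the two terms on its right-hand side by $O(h)$. Write $P_hu=P_h(u|_{\Omega_h})$ with the $0$-extension outside $\Omega_h$, and apply Lemma \ref{lem-divdivdiv} to the function $u|_{\Omega_h}$, giving
\begin{eqnarray*}
\norm u-P_hu\norm_{\Omega_h\setminus\partial\Omega_h}\le a\norm \D\cdot u\norm_{\Omega_h^\circ}+b\max_{x\in\tilde{B}_h}|u(x)|,
\end{eqnarray*}
with $\tilde{B}_h=(\Omega_h\setminus\Omega_h^\circ)\cup\partial\Omega_h^\circ$ and $a,b$ depending only on $\Omega$. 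It then suffices to prove that each summand is $O(h)$ with $h$-independent constants.

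For the first term: since $u\in C^2(\bar\Omega)$, the central-difference consistency estimate gives $D_i u_i(x)=\partial_{x_i}u_i(x)+O(h^2)$ uniformly on $\Omega_h$, where the implied constant depends only on $\norm u\norm_{C^2(\bar\Omega)}$; note that no $0$-extension issue arises at interior points of $\Omega_h^\circ$ relevant to the definition of $\D\cdot u$ there, since those points together with their $\pm he^i$ neighbours still lie in $\Omega_h$. Summing $|\D\cdot u(x)|^2=|\nabla\cdot u(x)+O(h^2)|^2=O(h^4)$ over $\Omega_h^\circ$ (using $\nabla\cdot u\equiv0$) and multiplying by $h^3$, and recalling $(\sharp\Omega_h^\circ)h^3\le\mes(\Omega)+o(1)$, yields $\norm\D\cdot u\norm_{\Omega_h^\circ}=O(h^2)=O(h)$. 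For the second term: by \eqref{distance1} and \eqref{distance2}, every $x\in\tilde{B}_h$ lies within $2(1+2\sqrt3)h$ of some boundary point $x^\ast\in\partial\Omega$; since $u$ is Lipschitz on $\bar\Omega$ with $u|_{\partial\Omega}=0$, we get $|u(x)|=|u(x)-u(x^\ast)|\le\mathrm{Lip}(u)\cdot 2(1+2\sqrt3)h$, so $\max_{x\in\tilde{B}_h}|u(x)|=O(h)$ with constant depending only on $\norm u\norm_{C^1(\bar\Omega)}$ and $\Omega$. Combining the two estimates with the bound from Lemma \ref{lem-divdivdiv} gives $\norm u-P_hu\norm_{\Omega_h\setminus\partial\Omega_h}\le\beta h$, as claimed.

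The bookkeeping I expect to be slightly delicate, rather than an obstacle proper, is verifying that the abstract inequality of Lemma \ref{lem-divdivdiv} applies cleanly here: that lemma is stated for \emph{arbitrary} $u:\Omega_h\to\R^3$, so there is nothing to check about divergence-freeness of $u|_{\Omega_h}$ itself — we simply feed in the restriction and let the Poincar\'e type inequality II absorb the potential part. One should make sure that the $0$-extension convention used inside $P_h$ does not contaminate the estimate of $\D\cdot u$ on $\Omega_h^\circ$; but by the definition of $\Omega_h^\circ$ every point of $\Omega_h^\circ$ together with all its $\pm he^i,\pm2he^i$ neighbours sits inside $\Omega_h\setminus\partial\Omega_h$, so $D_i u_i$ there is a genuine central difference of values of the smooth function $u$, and the $O(h^2)$ Taylor estimate is legitimate. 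With that observation in place the proof is a two-line assembly of the three ingredients above.
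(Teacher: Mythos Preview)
Your proof is correct and follows essentially the same route as the paper: apply Lemma~\ref{lem-divdivdiv}, then bound the discrete divergence on $\Omega_h^\circ$ by Taylor expansion using $\nabla\cdot u=0$, and bound $\max_{\tilde B_h}|u|$ via the distance estimates \eqref{distance1}--\eqref{distance2} together with $u|_{\partial\Omega}=0$. One small technical slip: with only $u\in C^2(\bar\Omega)$ the central-difference consistency is $D_iu_i(x)=\partial_{x_i}u_i(x)+O(h)$, not $O(h^2)$ (the latter would require a bounded third derivative); the paper accordingly writes $\D\cdot u(x)=O(h)$, which still gives $\norm\D\cdot u\norm_{\Omega_h^\circ}=O(h)$ and hence the claimed bound.
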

\begin{proof}
For each $x\in\Omega_h\setminus\partial\Omega_h$, we have 
$$\D\cdot u(x)=\sum_{j=1}^3\frac{u_j(x+he^j)-u_j(x-he^j)}{2h}=\nabla\cdot u(x)+O(h)=O(h).$$
It follows from \eqref{distance1}  that, for each $x\in \tilde{B}_h$, we have $x^\ast\in\partial\Omega$ such that 
$$|u(x)|=|u(x)-u(x^\ast)|=O(h).$$ 
Lemma \ref{lem-divdivdiv} yields the assertion. 
\end{proof}
\begin{Lemma}\label{B.V.}
There exists a constant $\beta>0$ depending only on $\Omega$ such that for any function $\phi:\Omega_h\to\R$ and $a=\pm1,\pm2$ we have  
\begin{eqnarray*}
&&\sum_{j=1}^3\Big(\sum_{x\in\Gamma_h^{j\pm}}|\phi(x+ahe^j)|^2 h^3\Big)^\2 \le \beta h^\2\Big( \sum_{j=1}^3\norm D_j^+ \phi \norm_{\Omega_h}+\norm  \phi \norm_{\Omega_h\setminus\partial\Omega_h}\Big).
\end{eqnarray*}
\end{Lemma}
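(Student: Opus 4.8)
The goal is a trace‑type estimate: the $L^2$‑mass of $\phi$ on the one‑cell‑thick boundary layers $\Gamma_h^{j\pm}$ (shifted by at most $2h$) is $O(h^{1/2})$ times the full discrete $H^1$‑norm of $\phi$. The natural strategy mirrors the classical construction of the trace operator: express the boundary value of $\phi$ at a point $x\in\Gamma_h^{j\pm}$ as a telescoping sum of discrete increments $D^+_i\phi$ along a grid line running from $x$ into the interior of $\Omega_h$ (terminating after $O(1)$ steps, using a cutoff as in the proof of Lemma~\ref{lem-divdivdiv}), then apply Cauchy--Schwarz and sum over the boundary. Because the telescope is summed against a fixed smooth cutoff $\gamma$, one picks up $|\phi(x+ahe^j)| \le \sum_{l} (h|D^+_i\phi(\cdots)| + \beta_1 h|\phi(\cdots)|)$ along the line, and squaring, using Cauchy--Schwarz over the $O(1/h)$ terms of the telescope, gives $|\phi(x+ahe^j)|^2 \le C\big(\sum_l |D^+_i\phi|^2 + \sum_l|\phi|^2\big)h$, where the interior sums run over grid points on the line.

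**Key steps in order.** First I would fix the direction index for the telescope: for $x\in\Gamma_h^{j\pm}$ near a boundary patch $V_k$, I use the direction $\omega_k\in\{\pm e^i\}$ guaranteed by Condition~A, so that $x + 2lh\omega_k$ stays in $\Omega_h$ (or $\Omega_h\setminus\partial\Omega_h$) for $l=1,\dots,l^\ast$ with $l^\ast h \asymp s_0$. (Note the shift $ahe^j$ with $|a|\le2$ only moves $x$ by $O(h)$, which for $h$ small enough lands in the same patch.) Second, write the telescoping identity $\phi(x+ahe^j)\gamma(0) = \sum_{l=1}^{l^\ast}\big(D_i^+(\gamma\cdot\phi\ \text{terms})\big)\cdot h$ exactly as in Lemma~\ref{lem-divdivdiv}, so that $|\phi(x+ahe^j)| \le h\sum_{l=1}^{l^\ast}\big(|D_i^+\phi(\cdots)| + \beta_1|\phi(\cdots)|\big)$. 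Third, square and apply Cauchy--Schwarz over the $l^\ast \asymp 1/h$ summands: $|\phi(x+ahe^j)|^2 \le h^2\cdot l^\ast\cdot\sum_{l}(\cdots)^2 \le C h\sum_{l}\big(|D_i^+\phi(\cdots)|^2 + |\phi(\cdots)|^2\big)$. Fourth, multiply by $h^3$ and sum over $x\in\Gamma_h^{j\pm}$: each interior grid point is hit $O(1)$ times as $x$ ranges over the boundary layer (distinct boundary points feed disjoint or boundedly‑overlapping interior lines), so $\sum_{x\in\Gamma_h^{j\pm}}|\phi(x+ahe^j)|^2 h^3 \le C h\big(\sum_i\|D_i^+\phi\|_{\Omega_h}^2 + \|\phi\|_{\Omega_h\setminus\partial\Omega_h}^2\big)$. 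Fifth, cover $\partial\Omega$ by the finitely many patches $\{V_k-\varepsilon\}$ from Condition~A, sum over $k=1,\dots,K$ (a fixed finite number), and take square roots, which yields the claimed $\beta h^{1/2}\big(\sum_j\|D_j^+\phi\|_{\Omega_h} + \|\phi\|_{\Omega_h\setminus\partial\Omega_h}\big)$ after absorbing $\sqrt{K}$, $\beta_1$, and the cutoff constants into $\beta$.

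**Main obstacle.** The delicate point is the bounded‑overlap counting in the fourth step together with the geometric bookkeeping of Condition~A: I must check that the lines $\{x+2lh\omega_k : l=1,\dots,l^\ast\}$ emanating from distinct $x\in\Gamma_h^{j\pm}$ (and the nearby shifted points $x+ahe^j$) cover each interior grid point only a bounded number of times, and that every $x\in\Gamma_h^{j\pm}$ actually lies in some $B_k^\varepsilon$ so that a valid telescope direction exists — this is exactly where the last bullet of Condition~A (the patches cover all of $\partial\Omega$) and the uniform lower bound $\varphi_k\ge s_0$ are used. The rest is routine Cauchy--Schwarz and reindexing, identical in spirit to the manipulations already carried out in the proof of Lemma~\ref{lem-divdivdiv}; in particular the appearance of $h^{1/2}$ rather than a full power of $h$ is simply the $\sqrt{l^\ast}\asymp h^{-1/2}$ loss from Cauchy--Schwarz over the telescope, multiplied against the $h^{1/2}$ from the single missing power of $h$ in passing from a boundary sum (weight $h^2$) to a volume sum (weight $h^3$).
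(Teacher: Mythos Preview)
Your proposal is correct and follows essentially the same route as the paper: telescope from the shifted boundary point into the interior along the direction $\omega_k$ supplied by Condition~A against the cutoff $\gamma$, apply Cauchy--Schwarz over the $O(1/h)$ terms of the telescope to pick up the factor $h$, then sum over $x\in\Gamma_h^{j\pm}\cap B_k^\varepsilon$ and over the finitely many patches $k$. The only cosmetic difference is that the paper carries out the telescope with step $h$ (so that the forward differences $D^+$ appear directly) rather than $2h$, and it treats the single representative case $a=-2$ explicitly; your worry about bounded overlap is handled there implicitly by the fact that distinct boundary points in a fixed patch lie on distinct $\omega_k$-lines.
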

\begin{proof}
We deal with the case of $a=-2$. Due to the same reasoning and notation as those of the proof of Lemma \ref{lem-divdivdiv}, we have for each $x\in \Gamma_h^{j+}\cap B_k^\ep$,
\begin{eqnarray*}
 |\phi(x-2he^j)| &\le& 
\sum_{l=0}^{2l^\ast-1} \Big(\Big|\frac{\phi(x-2he^j+(l+1)h\omega_k )-\phi(x-2he^j+lh\omega_k )}{h}\Big|\\
&&+\beta_1|\phi(x-2he^j+(l+1)h\omega_k)|\Big)h\\
&\le& \beta_2\Big(\sum_{l=0}^{2l^\ast-1} \Big|\frac{\phi(x-2he^j+(l+1)h\omega_k )-\phi(x-2he^j+lh\omega_k )}{h}\Big|^2h\\
&&+ \sum_{l=0}^{2l^\ast-1} |\phi(x-2he^j+(l+1)h\omega_k)|^2h\Big)^\2,\\
 |\phi(x-2he^j)|^2 &\le& 
\beta_2^2\sum_{l=0}^{2l^\ast-1} \Big|\frac{\phi(x-2he^j+(l+1)h\omega_k )-\phi(x-2he^j+lh\omega_k )}{h}\Big|^2h\\
&&+\beta_2^2 \sum_{l=0}^{2l^\ast-1} |\phi(x-2he^j+(l+1)h\omega_k)|^2h.
\end{eqnarray*}
Hence we see that 
\begin{eqnarray*}
\sum_{x\in\Gamma_h^{j+}\cap B_k^\ep}|\phi(x-2he^j)|^2 h^3
&\le& \beta_2^2h\sum_{j=1}^3\norm D^+_j\phi\norm_{\Omega_h}^2+\beta_2^2h\norm\phi\norm_{\Omegain}^2,\\
\sum_{x\in\Gamma_h^{j+}}|\phi(x-2he^j)|^2 h^3
&\le& \beta_2^2hK\sum_{j=1}^3\norm D^+_j\phi\norm_{\Omega_h}^2+\beta_2^2hK\norm\phi\norm_{\Omegain}^2,\\
\Big(\sum_{x\in\Gamma_h^{j+}}|\phi(x-2he^j)|^2 h^3\Big)^\2
&\le& \beta_3 h^\2\Big(\sum_{j=1}^3\norm D^+_j\phi\norm_{\Omega_h}+\norm\phi\norm_{\Omegain}\Big).
\end{eqnarray*}
The other cases are proved in the same way.
\end{proof}
\subsection{Proof of Theorem \ref{error-estimate}}
For each $n\ge0$, we have with Lemma \ref{div-smooth},
\begin{eqnarray}\label{v}
\norm b^n\norm_{\Omegain}
&\le&\norm P_h\tilde{b}^n\norm_{\Omegain}
+\norm v^n-P_hv^n\norm_{\Omegain}\le \norm \tilde{b}^n\norm_{\Omegain}
+\beta h.
\end{eqnarray}
We take the inner product of \eqref{bbbbb} and $\tilde{b}^{n+1}$ over $\Omega_h\setminus\partial\Omega_h$: 
Observe that  
\begin{eqnarray*}
-\sum_{x\in\Omega_h\setminus\partial\Omega_h}({\rm i})\cdot\tilde{b}^{n+1}(x)h^3&=&\frac{\tau}{2}\sum_{x\in\Omega_h\setminus\partial\Omega_h}\D\cdot u^n(x)|\tilde{b}^{n+1}(x)|^2h^3\\
&&+\frac{\tau}{2}\sum_{j=1}^3\sum_{x\in\Omega_h\setminus\partial\Omega_h}
\frac{1}{2h}u^n_j(x-he^j)\tilde{b}^{n+1}(x-2he^j)\cdot\tilde{b}^{n+1}(x)h^3\\
&&-\frac{\tau}{2}\sum_{j=1}^3\sum_{x\in\Omega_h\setminus\partial\Omega_h}
\frac{1}{2h}u^n_j(x+he^j)\tilde{b}^{n+1}(x+2he^j)\cdot\tilde{b}^{n+1}(x)h^3\\
&=&\frac{\tau}{2}\sum_{x\in\Omega_h\setminus\partial\Omega_h}\D\cdot u^n(x)|\tilde{b}^{n+1}(x)|^2h^3\\
&&-\frac{\tau}{2}\sum_{j=1}^3\sum_{x\in\Gamma_h^{j+}}
\frac{1}{2h}u^n_j(x-he^j)\tilde{b}^{n+1}(x-2he^j)\cdot\tilde{b}^{n+1}(x)h^3\\
&&+\frac{\tau}{2}\sum_{j=1}^3\sum_{x\in\Gamma_h^{j-}}
\frac{1}{2h}u^n_j(x+he^j)\tilde{b}^{n+1}(x+2he^j)\cdot\tilde{b}^{n+1}(x)h^3.
\end{eqnarray*}
With the discrete divergence free constraint of $u^n$ and $\tilde{b}^{n+1}(x)=-v^{n+1}(x)=O(h)$ for $x\in\Gamma_h^{j\pm}$, we obtain 
\begin{eqnarray*}
&&
-\sum_{x\in\Omega_h\setminus\partial\Omega_h}({\rm i})\cdot\tilde{b}^{n+1}(x)h^3\le
O(\tau)\sum_{j=1}^3\sum_{x\in\Gamma_h^{j+}}
|u^n_j(x-he^j)\tilde{b}^{n+1}(x-2he^j)|h^3\\
&&\qquad+O(\tau)\sum_{j=1}^3\sum_{x\in\Gamma_h^{j-}}
|u^n_j(x+he^j)\tilde{b}^{n+1}(x+2he^j)|h^3\\
&&\quad \le O(\tau)\sum_{j=1}^3\sum_{x\in\Gamma_h^{j+}}
|u^n_j(x-he^j)-v^{n}_j(x-he^j)||\tilde{b}^{n+1}(x-2he^j)|h^3  \\
&&\qquad+O(\tau)\sum_{j=1}^3\sum_{x\in\Gamma_h^{j+}}
|v_j^{n}(x-he^j)||\tilde{b}^{n+1}(x-2he^j)|h^3  \\
&&\qquad+O(\tau)\sum_{j=1}^3\sum_{x\in\Gamma_h^{j-}}
|u^n_j(x+he^j)-v^n_j(x+he^j)||\tilde{b}^{n+1}(x+2he^j)|h^3\\
&&\qquad+O(\tau)\sum_{j=1}^3\sum_{x\in\Gamma_h^{j-}}
|v^n_j(x+he^j)||\tilde{b}^{n+1}(x+2he^j)|h^3\\
&&\quad= O(\tau)\sum_{j=1}^3\sum_{x\in\Gamma_h^{j+}}
|b^{n}_j(x-he^j)||\tilde{b}^{n+1}(x-2he^j)|h^3  \\
&&\qquad+O(\tau)\sum_{j=1}^3\sum_{x\in\Gamma_h^{j-}}
|b^n_j(x+he^j)||\tilde{b}^{n+1}(x+2he^j)|h^3\\
&&\qquad+O(\tau h)\sum_{j=1}^3\sum_{x\in\Gamma_h^{j+}}
|\tilde{b}^{n+1}(x-2he^j)|h^3  \\
&&\qquad+O(\tau h)\sum_{j=1}^3\sum_{x\in\Gamma_h^{j-}}
|\tilde{b}^{n+1}(x+2he^j)|h^3. 
\end{eqnarray*}
We estimate each term: By Lemma \ref{B.V.} and \eqref{v}, we have
\begin{eqnarray*}
&&\sum_{j=1}^3\sum_{x\in\Gamma_h^{j+}}
|b^{n}_j(x-he^j)||\tilde{b}^{n+1}(x-2he^j)|h^3\\
&&\quad \le   \sum_{j=1}^3\Big(\sum_{x\in\Gamma_h^{j+}}|b^{n}_j(x-he^j)|^2h^3\Big)^\2  \Big(\sum_{x\in\Gamma_h^{j+}} |\tilde{b}^{n+1}(x-2he^j)|^2h^3\Big)^\2  \\
&&\quad \le  O(h^\2) \norm  b^n \norm_{\Omega_h\setminus\partial\Omega_h}  \Big(\sum_{j=1}^3 \norm D^+_j \tilde{b}^{n+1} \norm_{\Omega_h} + \norm\tilde{b}^{n+1}\norm_{\Omegain} \Big)\\
&&\quad \le O(h^\2) (\norm  \tilde{b}^n \norm_{\Omega_h\setminus\partial\Omega_h}  +\beta h)\Big(\sum_{j=1}^3 \norm D^+_j \tilde{b}^{n+1} \norm_{\Omega_h} + \norm\tilde{b}^{n+1}\norm_{\Omegain} \Big);
\end{eqnarray*}
\begin{eqnarray*}
&&\sum_{j=1}^3\sum_{x\in\Gamma_h^{j+}}
|\tilde{b}^{n+1}(x-2he^j)|h^3\le 
\sum_{j=1}^3\Big(\sum_{x\in\Gamma_h^{j+}}
h^3\Big)^\2\Big(\sum_{x\in\Gamma_h^{j+}}
|\tilde{b}^{n+1}(x-2he^j)|^2h^3\Big)^\2\\
&&\quad \le O(h^\2)O(h^\2)\Big(\sum_{j=1}^3\norm D^+_j \tilde{b}^{n+1} \norm_{\Omega_h} + \norm\tilde{b}^{n+1}\norm_{\Omegain} \Big); 
\end{eqnarray*}
The other terms are also estimated in this way. Hence, we obtain 
\begin{eqnarray}\label{(i)}
-\sum_{x\in\Omega_h\setminus\partial\Omega_h}({\rm i})\cdot\tilde{b}^{n+1}(x)h^3&\le&O(\tau h^2)\norm \tilde{b}^{n+1}\norm_{\Omega_h\setminus\partial\Omega_h} \\\nonumber
&&+O(\tau h^\2)\norm \tilde{b}^n\norm_{\Omegain}\norm \tilde{b}^{n+1}\norm_{\Omegain} \\\nonumber 
&&+O(\tau h^\2)\norm \tilde{b}^{n}\norm_{\Omega_h\setminus\partial\Omega_h}   \sum_{j=1}^3 \norm D^+_j \tilde{b}^{n+1} \norm_{\Omega_h}\\\nonumber
&&+O(\tau h^\frac{3}{2}) \sum_{j=1}^3 \norm D^+_j \tilde{b}^{n+1} \norm_{\Omega_h}.
\end{eqnarray}
\indent Since $b^n=-v^n=O(h)$ outside $\Omegain$, we have with \eqref{v},
\begin{eqnarray}\label{(ii)}
&&-\sum_{x\in\Omegain}({\rm ii})\cdot \tilde{b}^{n+1}(x)h^3
\le O(\tau)\norm b^n \norm_{\Omega_h}\norm \tilde{b}^{n+1}\norm_{\Omegain}\\\nonumber
&&\qquad \le O(\tau)\Big(\norm b^n \norm_{\Omegain}^2+O(h^3)\Big)^\2\norm \tilde{b}^{n+1}\norm_{\Omegain}\\\nonumber
&&\qquad \le O(\tau)\Big(\norm \tilde{b}^n\norm_{\Omegain}+\beta h +O(h^\frac{3}{2})\Big)\norm \tilde{b}^{n+1}\norm_{\Omegain}\\\nonumber
&&\qquad \le O(\tau)\norm \tilde{b}^n \norm_{\Omegain}\norm \tilde{b}^{n+1}\norm_{\Omegain}+O(\tau h)\norm \tilde{b}^{n+1}\norm_{\Omegain},
\end{eqnarray}
where we took $\max |D_jv^{n+1}|$ out of the inner product. 

Observe that 
\begin{eqnarray*}
&&\sum_{x\in\Omegain} ({\rm iii})\cdot\tilde{b}^{n+1}(x)h^3\\
&&\qquad= \tau\sum_{j=1}^3\sum_{x\in \Omega_h} D_j^2\tilde{b}^{n+1}(x)\cdot\tilde{b}^{n+1}(x)h^3 -\tau\sum_{j=1}^3\sum_{x\in\partial\Omega_h} D_j^2\tilde{b}^{n+1}(x)\cdot\tilde{b}^{n+1}(x)h^3  \\
&&\qquad=-\tau\sum_{j=1}^3\norm D_j^+\tilde{b}^{n+1}\norm_{\Omega_h}^2 +r,\\
&&r=-\tau\sum_{j=1}^3\sum_{x\in\partial\Omega_h} D_j^2\tilde{b}^{n+1}(x)\cdot\tilde{b}^{n+1}(x)h^3 
+\tau\sum_{j=1}^3\sum_{x\in \tilde{\Gamma}_h^{j+}} \frac{1}{h}D_j^+\tilde{b}^{n+1}(x)\cdot\tilde{b}^{n+1}(x+he^j) h^3\\
&&\,\,\,\,\,\quad -\tau\sum_{j=1}^3\sum_{x\in \tilde{\Gamma}_h^{j-}} \frac{1}{h}D_j^+\tilde{b}^{n+1}(x-he^j)\cdot\tilde{b}^{n+1}(x) h^3.
\end{eqnarray*}
Since $\tilde{b}^{n+1}=-v^{n+1}=O(h)$ outside $\Omegain$, we have 
\begin{eqnarray*}
r&=&\tau\sum_{j=1}^3\sum_{x\in\partial\Omega_h} \frac{D_j^+\tilde{b}^{n+1}(x)-D_j^+\tilde{b}^{n+1}(x-he^j)}{h}\cdot v^{n+1}(x)h^3 \\
&&+\tau\sum_{j=1}^3\sum_{x\in \tilde{\Gamma}_h^{j+}} \frac{1}{h}\frac{v^{n+1}(x+he^j)-v^{n+1}(x)}{h}\cdot v^{n+1}(x+he^j) h^3\\
&&-\tau\sum_{j=1}^3\sum_{x\in \tilde{\Gamma}_h^{j-}} \frac{1}{h}\frac{v^{n+1}(x)-v^{n+1}(x-he^j)}{h}\cdot v^{n+1}(x) h^3\\
&\le&O(\tau)\sum_{j=1}^3\sum_{x\in\partial\Omega_h}( |D_j^+\tilde{b}^{n+1}(x)|+|D_j^+\tilde{b}^{n+1}(x-he^j)|)h^3 
+O(\tau h)\\
&\le&O(\tau)\sum_{j=1}^3\sum_{x\in\Omega_h} |D_j^+\tilde{b}^{n+1}(x)|\chi_{\partial \Omega_h}(x)h^3\\
&&+O(\tau)\sum_{j=1}^3\sum_{x\in\Omega_h-he^j}|D_j^+\tilde{b}^{n+1}(x)|\chi_{\partial\Omega_h-he^j}(x) h^3 
+O(\tau h)\\
&\le&O(\tau h^\2)\sum_{j=1}^3\norm D_j^+\tilde{b}^{n+1}\norm_{\Omega_h}
+O(\tau h^\2)\sum_{j=1}^3\norm D_j^+\tilde{b}^{n+1}\norm_{\Omega_h-he^j} +O(\tau h) \\
&\le&O(\tau h^\2)\sum_{j=1}^3\norm D_j^+\tilde{b}^{n+1}\norm_{\Omega_h}+O(\tau h),
\end{eqnarray*}
where we note that $\Omega_h-he^j:=\{x\,|\,x-he^j,\,\,x\in\Omega_h\}$, $\partial\Omega_h-he^j:=\{x\,|\,x-he^j,\,\,x\in\partial\Omega_h\}$ and 
\begin{eqnarray}\label{hosoku}
&&\norm D_j^+\tilde{b}^{n+1}\norm_{\Omega_h-he^j}=
\Big(\sum_{x\in(\Omega_h-he^j)\cap\Omega_h}\!\!\!\!|D_j^+\tilde{b}^{n+1}(x)|^2h^3
+\sum_{x\in(\Omega_h-he^j)\setminus\Omega_h}\!\!\!\!|D_j^+\tilde{b}^{n+1}(x)|^2h^3\Big)^\2\\\nonumber
&&\qquad \le\Big(\norm D_j^+\tilde{b}^{n+1}\norm_{\Omega_h}^2
+\sum_{x\in(\Omega_h-he^j)\setminus\Omega_h}O(1)h^3\Big)^\2\le\Big(\norm D_j^+\tilde{b}^{n+1}\norm_{\Omega_h}^2
+O(h)\Big)^\2\\\nonumber
&&\qquad \le \norm D_j^+\tilde{b}^{n+1}\norm_{\Omega_h}+O(h^\2).
\end{eqnarray}
By the discrete Poincar\'e type inequality I, where we note that $\tilde{b}^{n+1}|_{\partial\Omega_h}\neq0$ and the discrete Poincar\'e type inequality I does not work for $\tilde{b}^{n+1}$ itself,  we have 
\begin{eqnarray}\label{Poin-poin}
&&\norm D^+_j\tilde{b}^{n+1}\norm_{\Omega_h}\ge 
\norm D_j^+(\tilde{b}^{n+1}-\tilde{b}^{n+1}\chi_{\partial\Omega_h} ) \norm_{\Omega_h}
-\norm D_j^+(\tilde{b}^{n+1}\chi_{\partial\Omega_h} ) \norm_{\Omega_h}\\ \nonumber
&&\quad \ge A^{-1} \norm \tilde{b}^{n+1}-\tilde{b}^{n+1}\chi_{\partial\Omega_h}\norm_{\Omegain}-\frac{2}{h}\Big(\sum_{x\in\Omega_h} |\tilde{b}^{n+1}(x)|^2\chi_{\partial\Omega_h}(x)^2 h^3\Big)^\2\\\nonumber
&&\quad=A^{-1}\norm  \tilde{b}^{n+1}\norm_{\Omegain}
-\frac{2}{h}\Big(\sum_{x\in\partial\Omega_h} |v^{n+1}(x)|^2 h^3 \Big)^\2\\\nonumber
&&\quad=A^{-1}\norm  \tilde{b}^{n+1}\norm_{\Omegain}-O(h^\2).
\end{eqnarray}
Hence, we obtain 
\begin{eqnarray}\label{(iii)}
&&\sum_{x\in\Omega_h\setminus\partial\Omega_h}({\rm iii})\cdot\tilde{b}^{n+1}(x)h^3\le-\tau A^{-1}\norm \tilde{b}^{n+1}\norm_{\Omegain} \sum_{j=1}^3\norm D_j^+\tilde{b}^{n+1}\norm_{\Omega_h}\\\nonumber
&&\quad +O(\tau h^\2)\sum_{j=1}^3\norm D_j^+\tilde{b}^{n+1}\norm_{\Omega_h}+O(\tau h).
\end{eqnarray}
Observe that 
\begin{eqnarray*}
\sum_{x\in\Omegain} -R^n(x)\cdot\tilde{b}^{n+1}(x)h^3&\le& 
\tau\sum_{x\in\Omegain} \D p^n(x)\cdot \tilde{b}^{n+1}(x)h^3\\
&& +(O(\tau h)+O(\tau^2))\norm \tilde{b}^{n+1}\norm_{\Omegain}. 
\end{eqnarray*}
With \eqref{bbbbb}, we have 
\begin{eqnarray*}
&&\tau\sum_{x\in\Omegain} \D p^n(x)\cdot \tilde{b}^{n+1}(x)h^3
 =\tau\sum_{x\in\Omegain} \D p^n(x)\cdot \Big\{
b^n(x)\\
&&\qquad\qquad- \frac{\tau}{2}\sum_{j=1}^3   
\Big(u^n_j(x-he^j)D^+_j\tilde{b}^{n+1}(x-he^j)+u^n_j(x+he^j)D^+_j\tilde{b}^{n+1}(x+he^j)\Big)\\
&&\qquad\qquad-\frac{\tau}{2}\sum_{j=1}^3 \Big(b^n_j(x-he^j)D^+_jv^{n+1}(x-he^j)+b^n_j(x+he^j)D^+_jv^{n+1}(x+he^j) \Big)\\
&&\qquad\qquad +\tau\sum_{j=1}^3D^2_j\tilde{b}^{n+1}(x) +\tau\nabla p^n(x)+O(\tau h)+O(\tau^2)  \Big\}h^3. 
\end{eqnarray*}
We estimate each term: 
Since $\D\cdot b^n(x)=-\D\cdot v^n(x)=O(h^2)$ on $\Omega_h$ and $b^n=-v^n=O(h)$ on $\partial\Omega_h$, we have 
\begin{eqnarray*}
&&\!\!\!\!\!\!\tau\sum_{x\in\Omegain} \D p^n(x)\cdot b^n(x) h^3
=\tau\sum_{x\in\Omega_h} \D p^n(x)\cdot b^n(x)h^3-\tau\sum_{x\in\partial\Omega_h} \D p^n(x)\cdot b^n(x)h^3\\
&& \quad= -\tau\sum_{j=1}^3\sum_{x\in\Omega_h} p^n(x)\big(\D\cdot  b^n (x)\big)h^3 \\
&&\qquad +\tau\sum_{j=1}^3 \Big\{ \sum_{x\in\tilde{\Gamma}_h^{j+}} p^n(x+he^j)b_j^n(x) 
 -\sum_{x\in\tilde{\Gamma}_h^{j-}} p^n(x)b_j^n(x-he^j)  \\
&&\qquad
 -\sum_{x\in\tilde{\Gamma}_h^{j-}} p^n(x)b_j^n(x-he^j)
+\sum_{x\in\tilde{\Gamma}_h^{j+}} p^n(x)b_j^n(x+he^j)        \Big\}\frac{h^3}{2h}+O(\tau h^2)\\
&&\quad= O(\tau h);\\\\
&&\!\!\!\!\!\!-\frac{\tau^2}{2}\sum_{x\in\Omegain} \D p^n(x)\cdot \Big\{
\sum_{j=1}^3   
\Big(u^n_j(x-he^j)D^+_j\tilde{b}^{n+1}(x-he^j) \\
&&\qquad +u^n_j(x+he^j)D^+_j\tilde{b}^{n+1}(x+he^j)\Big)\Big\}h^3\\
&&\quad \le O(\tau^2)\norm u^n\norm_{\Omegain}\sum_{j=1}^3\norm D_j^+\tilde{b}^{n+1} \norm_{\Omega_h}
= O(\tau^2)\sum_{j=1}^3\norm D_j^+\tilde{b}^{n+1} \norm_{\Omega_h};\\\\
&&\!\!\!\!\!\!-\frac{\tau^2}{2}\sum_{x\in\Omegain} \D p^n(x)\cdot \Big\{
\sum_{j=1}^3   
 \Big(b^n_j(x-he^j)D^+_jv^{n+1}(x-he^j)\\
 &&\qquad +b^n_j(x+he^j)D^+_jv^{n+1}(x+he^j) \Big)\Big\}h^3\\
&&\quad \le O(\tau^2)\norm b^n\norm_{\Omega_h}
\le O(\tau^2)\norm b^n\norm_{\Omegain}+O(\tau^2h^{\frac{3}{2}}) \\
&&\quad \le O(\tau^2)(\norm \tilde{b}^n\norm_{\Omegain}+\beta h)+O(\tau^2h^{\frac{3}{2}})
 \le O(\tau^2)\norm \tilde{b}^n\norm_{\Omegain}+O(\tau^2h) ;
\end{eqnarray*}
Since $p\in C^2$ and $\tilde{b}^{n+1}=-v^{n+1}$ on $\partial\Omega_h$, we have with \eqref{hosoku};
\begin{eqnarray*}
&&\tau^2\sum_{x\in\Omegain} \D p^n(x)\cdot \Big\{
\sum_{j=1}^3D^2_j\tilde{b}^{n+1}(x)  \Big\}h^3\\
&&\quad \le \tau^2\sum_{j=1}^3\sum_{x\in\Omega_h} \D p^n(x)\cdot D^2_j\tilde{b}^{n+1}(x) h^3
-\tau^2\sum_{j=1}^3\sum_{x\in\partial\Omega_h} \D p^n(x)\cdot D^2_j\tilde{b}^{n+1}(x) h^3\\
&&\quad\le  -\tau^2\sum_{j=1}^3\sum_{x\in\Omega_h} D_j^+(\D p^n)(x)\cdot D^+_j\tilde{b}^{n+1}(x) h^3\\
&&\qquad + \tau^2\sum_{j=1}^3\Big(\sum_{x\in\tilde{\Gamma}_h^{j+}}\frac{1}{h}\D p^n(x+he^j)\cdot D^+_j\tilde{b}^{n+1}(x) 
- \sum_{x\in\tilde{\Gamma}_h^{j-}}\frac{1}{h}\D p^n(x)\cdot D^+_j\tilde{b}^{n+1}(x-he^j)\Big)h^3\\
&&\qquad-\tau^2\sum_{j=1}^3\sum_{x\in\partial\Omega_h} \D p^n(x)\cdot \frac{D^+_j\tilde{b}^{n+1}(x)-D^+_j\tilde{b}^{n+1}(x-he^j)}{h} h^3\\
&&\quad \le O(\tau^2)\sum_{j=1}^3 \norm D_j^+\tilde{b}^{n+1} \norm_{\Omega_h}+O(\tau^2)+O(\frac{\tau^2}{h})\sum_{j=1}^3\sum_{x\in\Omega_h}\chi_{\partial\Omega_h}(x)|D_j^+\tilde{b}^{n+1}(x)|h^3 \\
&&\qquad +O(\frac{\tau^2}{h})\sum_{j=1}^3\sum_{x\in\Omega_h-he^j}\chi_{\partial\Omega_h-he^j}(x)|D^j_+\tilde{b}^{n+1}(x)| h^3\\
&&\quad \le  O(\frac{\tau^2}{h^\2})\sum_{j=1}^3 \norm D_j^+\tilde{b}^{n+1} \norm_{\Omega_h}+O(\tau^2);\\\\
&&\tau\sum_{x\in\Omegain} \D p^n(x)\cdot\Big(\tau\nabla p^n(x)+O(\tau h)+O(\tau^2)  \Big)h^3\le O(\tau^2).
\end{eqnarray*}
Therefore, we obtain 
\begin{eqnarray}\label{(iiii)}
&&-\sum_{x\in\Omegain} R^n(x)\cdot\tilde{b}^{n+1}(x)h^3
\le   (O(\tau h)+O(\tau^2))\norm \tilde{b}^{n+1}\norm_{\Omegain}\\\nonumber
 &&\qquad+O(\tau^2)\norm \tilde{b}^n\norm_{\Omegain}+ O(\frac{\tau^2}{h^\2})\sum_{j=1}^3 \norm D_j^+\tilde{b}^{n+1} \norm_{\Omega_h}
+ O(\tau h)+O(\tau^2).
\end{eqnarray}
Finally, we have 
\begin{eqnarray}\label{order2}
\sum_{x\in\Omegain}b^n(x)\cdot\tilde{b}^{n+1}(x)h^3\le\norm \tilde{b}^n\norm_\Omegain\norm \tilde{b}^{n+1}\norm_\Omegain+\beta h\norm \tilde{b}^{n+1}\norm_\Omegain.
\end{eqnarray}
The estimates \eqref{(i)}, \eqref{(ii)},  \eqref{(iii)}, \eqref{(iiii)} and \eqref{order2} together with the scaling $\tau=\theta h^\frac{3}{4}$  yield 
\begin{eqnarray}\label{2727}
&&\norm\tilde{b}^{n+1}\norm_{\Omegain}
\le (1+O(\tau)) \norm\tilde{b}^n\norm_\Omegain  \\\nonumber
&&\quad +\frac{O(\tau^2)  \norm\tilde{b}^n\norm_\Omegain +O(\tau h)+O(\tau^2) }{ \norm \tilde{b}^{n+1}\norm_\Omegain} +O(h)\\\nonumber
&&\quad 
-\tau\Big(\sum_{j=1}^3 \norm D_j^+\tilde{b}^{n+1}\norm_{\Omega_h}\Big)\Big(A^{-1}- 
\frac{O(h^\2) \norm\tilde{b}^n\norm_\Omegain
+O(h^\2)+O(\frac{\tau}{h^\2})}{ \norm \tilde{b}^{n+1}\norm_\Omegain}\Big)
\\\nonumber
&&\le(1+\beta_1\tau)\norm\tilde{b}^n\norm_\Omegain 
+\frac{\beta_2\tau h^{\frac{3}{4}}}{\norm \tilde{b}^{n+1}\norm_\Omegain}+\beta_3\tau h^\frac{1}{4}\\\nonumber
&&\quad -\tau\Big(\sum_{j=1}^3 \norm D_j^+\tilde{b}^{n+1}\norm_{\Omega_h}\Big)\Big(A^{-1}-\frac{\beta_4h^\frac{1}{4}}{\norm \tilde{b}^{n+1} \norm_\Omegain}  \Big),
\end{eqnarray}
where $\beta_1$ to $\beta_4$ are some positive constants independent of $\tau ,h$ and $\theta$.

We show by induction, 
\begin{eqnarray}\label{hodai}
&&\norm\tilde{b}^n\norm_\Omegain \le  \eta (1+2\beta_1\tau)^nh^\frac{1}{4}\mbox{\quad for all $0\le n\le T_\tau$ as $\delta=(\tau,h)\to0$,}\\\nonumber
&&\eta:=\max\Big\{\frac{2\beta_3}{\beta_1},A\beta_4\Big\}.
\end{eqnarray}
Since $v^0\in C^1(\bar{\Omega})$ and  
$\norm \tilde{u}^0-v^0\norm_{\Omegain}\le O(h)$, we have \eqref{hodai} for $n=0$. Suppose that \eqref{hodai} holds up to some $n\ge0$ and fails for $n+1$. Then, \eqref{2727} implies for sufficiently small $\delta$,  
\begin{eqnarray*}
&&\norm\tilde{b}^{n+1}\norm_\Omegain\le
 (1+\beta_1\tau)\eta (1+2\beta_1\tau)^nh^\frac{1}{4}
 +\frac{\beta_2\tau h^\frac{3}{4}}{\eta (1+2\beta_1\tau)^{n+1}h^\frac{1}{4}} +\beta_3\tau h^\frac{1}{4}\\
 &&\qquad -\tau\Big(\sum_{j=1}^3 \norm D_j^+\tilde{b}^{n+1}\norm_{\Omega_h}\Big)\Big(A^{-1}-\frac{\beta_4h^\frac{1}{4}}{\eta (1+2\beta_1\tau)^{n+1}h^\frac{1}{4}}  \Big)\\
 &&\quad \le \eta (1+2\beta_1\tau)^{n+1}h^\frac{1}{4}-\eta\beta_1(1+2\beta_1\tau)^n\tau h^\frac{1}{4}
 +\frac{\beta_2\tau h^\2}{\eta} +\beta_3\tau h^\frac{1}{4}\\
 &&\qquad  -\tau\Big(\sum_{j=1}^3 \norm D_j^+\tilde{b}^{n+1}\norm_{\Omega_h}\Big)\Big(A^{-1}-\frac{\beta_4}{\eta }  \Big)\\
 &&\quad \le  \eta (1+2\beta_1\tau)^{n+1}h^\frac{1}{4}
 -\eta \beta_1\tau h^\frac{1}{4}+2\beta_3\tau h^\frac{1}{4}  \le  \eta (1+2\beta_1\tau)^{n+1}h^\frac{1}{4}, 
\end{eqnarray*} 
which is a contradiction. Hence, we necessarily have \eqref{hodai}. 

Note that $(1+2\beta_1\tau)^n\le e^{2\beta_1n\tau}\le e^{2\beta_1T}$ for $0\le n\le T_\tau$ and  $\norm \tilde{b}^n\norm_{\Omega_h}\le \norm \tilde{b}^n\norm_{\Omegain}+O(h^\frac{3}{2})$ because of $\tilde{b}^n=-\bar{v}^n=O(h)$ on $\partial\Omega_h$. Thus, Theorem \ref{error-estimate} follows from \eqref{hodai}. 
\setcounter{section}{3}
\setcounter{equation}{0}
\section{Problem with time-periodic external force}

We investigate the Navier-Stokes equations with a time-periodic external force. Suppose that the external force $f$ is time-periodic with the period $1$, i.e., 
$$f\in L^2_{\rm loc}([0,\infty);L^2(\Omega)^3),\quad f(t,\cdot)=f(1+t,\cdot)\mbox{\quad a.e. $t\ge0$}.$$
Take $\tau=1/T_1$ with $T_1\in\N$. Then, we may introduce the time-$1$ map 
$$\Phi_\delta:\tilde{u}^0\mapsto \tilde{u}^{T_1},\quad\delta=(h,\tau)$$
 of  the discrete Navier-Stokes equations. We find a fixed point of $\Phi_\delta$, which yields a time-periodic solution of the discrete Navier-Stokes equations, i.e., a solution $\bar{u}^n,\tilde{\bar{u}}^n$ of (\ref{initial-1})-(\ref{fractional-3}) such that 
 $$\bar{u}^{n+T_1}=\bar{u}^n,\quad\tilde{\bar{u}}^{n+T_1}=\tilde{\bar{u}}^n\mbox{ for all $n\ge0$}.$$  
Then, we show that  $\bar{u}^n,\tilde{\bar{u}}^n$ tend to a time-periodic Leray-Hopf weak solution with the period $1$ as $\delta\to0$,
where  $v=(v_1,v_2,v_3):[0,\infty)\times\Omega\to\R^3$ is called a time-periodic Leray-Hopf weak solution of
\begin{eqnarray}\label{NSp}
 \left \{
\begin{array}{lll}
\,\,\,\,\, v_t&=& -(v\cdot \nabla)v +\Delta v+f -\nabla p\mbox{\quad in $(0,\infty)\times\Omega $,}
\medskip\\
\nabla\cdot v &=&0\mbox{\quad\quad\quad\quad\quad\quad\quad\quad\quad\quad\quad\,\,\, in $(0,\infty)\times\Omega$, \qquad\qquad\quad}
\medskip\\
\,\,\,\,\,v&=&0\mbox{\qquad\qquad\qquad\qquad\qquad\,\,\,\,\,\,\,\,\, on $\partial \Omega$},
\end{array}
\right.
\end{eqnarray}
with the period $1$, if
\begin{eqnarray*}\nonumber
&&v\in L^\infty([0,\infty);L^2_{\sigma}(\Omega))\cap L^2_{\rm loc}([0,\infty);H^1_{0,\sigma}(\Omega)),\\\nonumber
&& v(t+1,\cdot)=v(t,\cdot)\mbox{ a.e. $t\ge0$},\\
&&- \int_0^\infty\int_\Omega v(t,x)\cdot \partial_t\phi(x,t)dxdt =-\sum_{j=1}^3\int_0^\infty\int_\Omega v_j(t,x)\partial_{x_j}v(t,x)\cdot\phi(t,x)dxdt\\\nonumber
&&\quad -\sum_{j=1}^3\int_0^\infty\int_\Omega\partial_{x_j}v(t,x)\cdot\partial_{x_j}\phi(t,x)dxdt\\\nonumber
&&\quad +\int_0^\infty\int_\Omega f(t,x)\cdot\phi(t,x)dxdt\quad \mbox{ for all $\phi\in C^\infty_0((0,\infty);C^\infty_{0,\sigma}(\Omega))$.}
\end{eqnarray*} 
We also discuss long-time behaviors of  the (discrete) Navier-Stokes equations, as well as an error estimate, assuming that there exists a smooth time-periodic solution of (\ref{NSp}).
\subsection{Time-global solvability and time-periodic solution}
Define the set $\tilde{U}_R$ of initial data $\tilde{u}^0$ of the discrete Navier-Stokes equations as 
$$\tilde{U}_R:=\{\tilde{u}:\Omega_h\to\R^3 \,|\,\,\,\,\norm \tu\norm_{\Omega_h}\le R, \quad \tilde{u}|_{\partial\Omega_h}=0  \},$$ 
and the constant $R_0(\Omega,f)\ge0$ as 
\begin{eqnarray*}
R_0(\Omega,f):=\frac{1}{1-e^{-A^{-2}}}\Big(\frac{1-e^{-2A^{-2}}}{2A^{-2}}\Big)^\2\norm f\norm_{L^2([0,1];L^2(\Omega)^3)},
\end{eqnarray*}
where $A>0$ is the constant in the discrete Poincar\'e type inequality I. The next theorem is an immediate consequence of Theorem \ref{nonperiodic-time-global}.
\begin{Thm}\label{time-global}
For any $R\ge R_0(\Omega,f)$ and fixed $\delta=(h,\tau)$, the time-$1$ map $\Phi_\delta$ maps   
 $\tilde{U}_R$ to itself.  For each $\tu^0\in \tilde{U}_R$, the discrete Navier-Stokes equations is solvable for all $n\in\N$  and the solution satisfies  $\Phi_\delta^m(\tu^{0})=\tu^{mT_1}\in \tilde{U}_R$ for all  $m\in\N$. 
\end{Thm}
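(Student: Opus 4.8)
The plan is to obtain Theorem~\ref{time-global} as a direct corollary of Theorem~\ref{nonperiodic-time-global}; the only substantive point is to verify that the time-periodicity of $f$ reduces the standing hypothesis of Subsection~2.4 to a bound over a single period, with the sharp constant $\alpha=\norm f\norm_{L^2([0,1];L^2(\Omega)^3)}$.

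First I would note that, since $f(t+1,\cdot)=f(t,\cdot)$ for a.e.\ $t\ge0$, the change of variable $t\mapsto t+(n-1)$ gives
\[
\norm f\norm_{L^2([n-1,n];L^2(\Omega)^3)}=\norm f\norm_{L^2([0,1];L^2(\Omega)^3)}\quad (n\in\N).
\]
Hence the assumption imposed in Subsection~2.4 holds with $\alpha:=\norm f\norm_{L^2([0,1];L^2(\Omega)^3)}$, and for this choice of $\alpha$ the constant $R_0(\Omega,f)$ defined in the present section is precisely the constant $R_0(\Omega,f)$ appearing in Theorem~\ref{nonperiodic-time-global}. Consequently, for every $R\ge R_0(\Omega,f)$ and every $\tu^0\in\tilde{U}_R$, Theorem~\ref{nonperiodic-time-global} already yields that \eqref{initial-1}--\eqref{fractional-3} is solvable for all $n\in\N$ and that $\tu^{mT_1}\in\tilde{U}_R$ for all $m\in\N$.

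Next I would record that the discrete forcing inherits the periodicity: since $\tau T_1=1$, the defining formula for $f^n$ together with $f(t+1,\cdot)=f(t,\cdot)$ gives $f^{n+T_1}=f^n$ for every $n\ge0$, so the recurrence \eqref{initial-1}--\eqref{fractional-3} is invariant under the shift $n\mapsto n+T_1$. Therefore the time-$1$ map $\Phi_\delta:\tu^0\mapsto\tu^{T_1}$ is well defined, its iterates reproduce the discrete trajectory in the sense that $\Phi_\delta^m(\tu^0)=\tu^{mT_1}$, and applying the previous paragraph with $m=1$ gives $\Phi_\delta(\tilde{U}_R)\subset\tilde{U}_R$; the remaining assertions of Theorem~\ref{time-global} then follow at once. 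There is no real analytic obstacle here: the statement is essentially a restatement of Theorem~\ref{nonperiodic-time-global}, and the only steps needing care are the identification of the two definitions of $R_0(\Omega,f)$ via the optimal $\alpha$ and the verification that $\Phi_\delta$ is genuinely autonomous, so that iterating the time-$1$ map coincides with following $\{\tu^{mT_1}\}_{m\ge0}$.
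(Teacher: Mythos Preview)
Your proposal is correct and follows the same approach as the paper, which simply declares the theorem ``an immediate consequence of Theorem~\ref{nonperiodic-time-global}.'' You have in fact supplied the details the paper omits---identifying $\alpha=\norm f\norm_{L^2([0,1];L^2(\Omega)^3)}$ via periodicity so that the two definitions of $R_0(\Omega,f)$ coincide, and checking $f^{n+T_1}=f^n$ so that iterating $\Phi_\delta$ agrees with the discrete flow---so nothing is missing.
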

\begin{Thm}\label{fixed}
For any $R\ge R_0(\Omega,f)$ and fixed $\delta=(h,\tau)$, the time-$1$ map $\Phi_\delta:\tilde{U}_R\to\tilde{U}_R$ possesses at least one fixed point, which yields a time-periodic solution of the discrete Navier-Stokes equations.   
\end{Thm}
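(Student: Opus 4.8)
The plan is to realize $\Phi_\delta$ as a continuous self-map of a compact convex set and invoke Brouwer's fixed point theorem, then upgrade the resulting fixed point to a genuine periodic orbit using the unique solvability already established in Section 2.

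First I would fix the functional setting. Since $\Omega$ is bounded, $\Omega_h$ is a finite set, so the space $X_h:=\{\tilde u:\Omega_h\to\R^3\,|\,\tilde u|_{\partial\Omega_h}=0\}$ is finite-dimensional (isomorphic to $\R^{3a}$ with $a=\sharp(\Omega_h\setminus\partial\Omega_h)$), and hence $\tilde U_R=\{\tilde u\in X_h\,|\,\norm \tilde u\norm_{\Omega_h}\le R\}$ is a nonempty ($0\in\tilde U_R$), bounded, closed, convex subset of $X_h$, i.e.\ a compact convex set. By Theorem \ref{time-global}, for $R\ge R_0(\Omega,f)$ the map $\Phi_\delta$ sends $\tilde U_R$ into itself. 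Thus the only substantive point is the continuity of $\Phi_\delta$ on $\tilde U_R$.

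Second, to prove continuity I would write $\Phi_\delta$ as the composition $\tilde u^0\mapsto u^0=P_h\tilde u^0\mapsto\tilde u^1\mapsto u^1\mapsto\cdots\mapsto u^{T_1-1}\mapsto \tilde u^{T_1}$. The operator $P_h$ is linear, hence continuous on $X_h$, and by Theorem \ref{Projection} its range lies in $\mathcal V_h:=\{u:\Omega_h\to\R^3\,|\,\D\cdot u=0 \text{ on } \Omegain,\ u|_{\partial\Omega_h}=0\}$. On $\mathcal V_h$ the implicit step is governed by the linear system $A(u^n;h,\tau)y=\alpha(u^n,f^n)$ from the proof of Theorem \ref{implicit}: the entries of $A(u^n;h,\tau)$ are polynomials in the components of $u^n$, $\alpha$ depends affinely on $u^n$, and $A(u^n;h,\tau)$ is invertible for every $u^n\in\mathcal V_h$ by Theorem \ref{implicit}. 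Since matrix inversion is continuous on the open set of invertible matrices and $u^n\mapsto(A(u^n;h,\tau),\alpha(u^n,f^n))$ is continuous, Cramer's rule shows that $u^n\mapsto\tilde u^{n+1}=A(u^n;h,\tau)^{-1}\alpha(u^n,f^n)$ is continuous on $\mathcal V_h$. Along the iteration each $u^n=P_h\tilde u^n\in\mathcal V_h$ by induction, so every factor in the composition is continuous; hence $\Phi_\delta:\tilde U_R\to\tilde U_R$ is continuous, and Brouwer's fixed point theorem furnishes $\tilde{\bar u}^0\in\tilde U_R$ with $\Phi_\delta(\tilde{\bar u}^0)=\tilde{\bar u}^{T_1}=\tilde{\bar u}^0$.

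Finally, I would run \eqref{initial-1}-\eqref{fractional-3} from $\tilde{\bar u}^0$ to obtain a trajectory $\bar u^n,\tilde{\bar u}^n$ defined for all $n\ge0$ (Theorem \ref{time-global}), and check $T_1$-periodicity. Because $f(t,\cdot)=f(1+t,\cdot)$ and $\tau=1/T_1$, the definition of $f^n$ gives $f^{n+T_1}=f^n$ for all $n$; therefore the shifted sequences $\{\bar u^{n+T_1}\}_{n\ge0}$, $\{\tilde{\bar u}^{n+T_1}\}_{n\ge0}$ solve the same recurrence with the same initial datum $\tilde{\bar u}^{T_1}=\tilde{\bar u}^0$, and the unconditional unique solvability of \eqref{initial-1}-\eqref{fractional-3} (Theorem \ref{L2-estimate}) forces $\bar u^{n+T_1}=\bar u^n$ and $\tilde{\bar u}^{n+T_1}=\tilde{\bar u}^n$ for all $n\ge0$. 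Hence the fixed point yields a time-periodic solution of the discrete Navier-Stokes equations. I expect the only genuinely delicate point to be the continuity of the implicit solve; what makes it routine is the observation that $P_h$ enforces exactly the discrete divergence-free constraint required by Theorem \ref{implicit}, so invertibility of $A(u^n;h,\tau)$—and therefore continuity of its inverse—persists along the entire orbit, with no smallness or scaling restriction on $h,\tau$.
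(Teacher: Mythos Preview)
Your proof is correct and follows the same overall strategy as the paper: identify $\tilde U_R$ with a compact convex set in a finite-dimensional space, verify that $\Phi_\delta$ is a continuous self-map, and apply Brouwer. The only real difference is in the continuity step. The paper establishes continuity by an energy-type estimate on the difference of two trajectories: writing $\tilde b^n=\tilde w^n-\tilde u^n$, $b^n=w^n-u^n$, it derives the recurrence $\norm \tilde b^{n+1}\norm_{\Omega_h}\le\big(1+\tau\sum_j\max_{\Omega_h}|D_j\tilde u^{n+1}|\big)\norm b^n\norm_{\Omega_h}$ from the difference equation that later reappears as \eqref{41414141} in Section~4.3. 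You instead argue algebraically, observing that the implicit step is $y=A(u^n;h,\tau)^{-1}\alpha(u^n,f^n)$ with $A$ invertible on $\mathcal V_h$ by Theorem~\ref{implicit} and with entries depending polynomially on $u^n$, so continuity follows from Cramer's rule. Your route is shorter and more elementary; the paper's route has the incidental benefit of previewing the stability calculations of Subsection~4.3. Your explicit verification that the fixed point yields a $T_1$-periodic orbit, via $f^{n+T_1}=f^n$ and the unique solvability of Theorem~\ref{L2-estimate}, is a detail the paper leaves implicit.
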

\begin{proof}
Since $N_h:=\sharp\Omega_h$ is finite, we find a one to one onto mapping $\Theta_h:\tilde{U}_R\to B_R\subset\R^{3N_h}$, i.e., we tag the points of $\Omega_h$ as $x^1,x^2,\ldots,x^{N_h}$ and define $y=y(u):=(u(x^1),u(x^2),\ldots,u(x^{N_h}))\in \R^{3N_h}$ for each $u\in\tilde{U}_R$.   Since $\norm u\norm_{\Omega_h}\le R$, the Euclidian norm of $y(u)$ is also bounded by $Rh^{-\frac{3}{2}}$ for each fixed $h$. It is clear that $\tilde{U}_R$ is a convex set, and hence, $B_R$ is a bounded convex subset of $\R^{3N_h}$. 

Since $\Phi_\delta(u)$ is obtained though finitely many basic arithmetic operations, $\Phi_\delta$ is continuous with respect  to $\norm \cdot \norm_{\Omega_h}$. In fact,  let $\tilde{u}^n,u^n$ and $\tilde{w}^n,w^n$ be solutions of the discrete Navier-Stokes equations with $\tilde{u}^0,\tilde{w}^0\in\tilde{U}_R$ and set $\tilde{b}^n:=\tilde{w}^n-\tilde{u}^n$, $b^n:=w^n-u^n$;  It is enough to check that $\norm \tilde{b}^n\norm_{\Omega_h}\to0$ ($n=0,1,2,\ldots,T_1$), as $\norm \tilde{b}^0\norm_{\Omega_h}\to0$ in the sense of $\tilde{w}^0\to\tilde{u}^0$; Since $\norm b^n\norm_{\Omega_h}\le \norm \tilde{b}^n\norm_{\Omega_h}$ due to the property of $P_h$, we have $\norm b^0\norm_{\Omega_h}\to 0$ as $\norm \tilde{b}^0\norm_{\Omega_h}\to0$; Suppose that $\norm b^n\norm_{\Omega_h}\to 0$ as $\norm \tilde{b}^0\norm_{\Omega_h}\to0$ for some $n\ge0$; The discrete Navier-Stokes equations implies \eqref{41414141} in the proof of Theorem \ref{small2} below;  Taking the inner product of \eqref{41414141} and $\tilde{b}^{n+1}$ together with the calculation for (i) and (iii) in the proof, we obtain 
\begin{eqnarray*}
\norm \tilde{b}^{n+1}\norm_{\Omega_h}&\le& \norm b^n\norm_{\Omega_h}+\norm {\rm (ii)}\norm_{\Omega_h}\\
&\le& \norm b^n\norm_{\Omega_h}+\tau\sum_{j=1}^3
\max_{x\in\Omega_h}|D_j\tilde{u}^{n+1}(x)|\norm b^n\norm_{\Omega_h}\to 0\mbox{ as  $\norm \tilde{b}^0\norm_{\Omega_h}\to0$,}
\end{eqnarray*}
where we note that $h$ is fixed and $\max_{x\in\Omega_h}|D_j\tilde{u}^{n+1}(x)|$ is bounded in the process of $\norm \tilde{b}^0\norm_{\Omega_h}\to0$ and $\norm b^{n+1}\norm_{\Omega_h}\le\norm \tilde{b}^{n+1}\norm_{\Omega_h}$; By induction, we have our assertion. 

Therefore, the map 
$\Theta_h\circ\Phi_\delta\circ\Theta_h^{-1}:B_R\to B_R$ 
is continuous with respect to the   Euclidian norm of $\R^{3N_h}$. Brouwer's fixed point theorem guarantees existence of a fixed point.    
\end{proof}
\subsection{Time-periodic Leray-Hopf weak solution}

Let $\tilde{\bar{u}}^n$, $\bar{u}^n$ be the solution of the discrete Navier-Stokes equations with initial data equal to a fixed point of $\Phi_\delta$. Define the step functions $\bar{u}_\delta,\tilde{\bar{u}}_\delta,\bar{w}^i_\delta:[0,\infty)\times\Omega\to\R^3$ with $\tilde{\bar{u}}^n, \bar{u}^n$ and $D^+_i\tilde{\bar{u}}^{n+1}$ in the same way as (\ref{step1}) to (\ref{step3}). The argument on weak and strong convergence in \cite{Kuroki-Soga} proves that  $\bar{u}_\delta|_{[0,1]},\tilde{\bar{u}}_\delta|_{[0,1]}$ weakly converge to some function $\bar{v}\in L^\infty([0,1];L^2_\sigma(\Omega))\cap L^2([0,1];H^1_{0,\sigma}(\Omega))$ and $\tilde{\bar{u}}_\delta|_{[0,1]}$ strongly convergence to $\bar{v}$ in $L^2([0,1];L^2(\Omega)^3)$ as $\delta\to0$ (up to a subsequence). Let $\bar{v}$ be periodically extended in time with the period $1$, i.e., $\bar{v}\in L^\infty([0,\infty);L^2_\sigma(\Omega))\cap L^2_{\rm loc}([0,\infty);H^1_{0,\sigma}(\Omega))$ and $\bar{v}(t+1,\cdot)=\bar{v}(t,\cdot)$ for a.e. $t\in[0,\infty)$. Since  $\tilde{\bar{u}}^n, \bar{u}^n$ are time-periodic with the period $1$, it is clear that, for any fixed $T>0$, $\bar{u}_\delta|_{[0,T]},\tilde{\bar{u}}_\delta|_{[0,T]}$ weakly converge to $\bar{v}|_{[0,T]}$ and $\tilde{\bar{u}}_\delta|_{[0,T]}$ strongly convergence to $\bar{v}|_{[0,T]}$ in $L^2([0,T];L^2(\Omega)^3)$ as $\delta\to0$ (the same subsequence as the above). Furthermore, $\bar{v}$ is a time-periodic Leray-Hopf weak solution of (\ref{NSp}) with the period $1$. By taking $R=R_0(\Omega,f)$ in Theorem \ref{fixed}, we find a time-periodic solution which tends to $0$ as $f\to0$. To sum up, we have 
\begin{Thm}\label{periodicLH}
A time-periodic solution of the discrete Navier-Stokes equations tends to a time-periodic Leray-Hopf weak solution of (\ref{NSp})  as $\delta\to0$ (up to a subsequence).   There exists a family of time-periodic (discrete and Leray-Hopf weak) solutions which tends to $0$ in the $L^2$-norm as $\norm f\norm_{L^2([0,1];L^2(\Omega)^3)}\to0$. 
\end{Thm}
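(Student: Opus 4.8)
Our plan is to combine the time-global a priori bounds of Section 2 with the weak/strong compactness machinery already established for the initial value problem. Fix $R:=R_0(\Omega,f)$ and, for each $\delta=(h,\tau)$ with $\tau=1/T_1$, $T_1\in\N$, let $\tilde{u}^0\in\tilde{U}_R$ be a fixed point of $\Phi_\delta$ supplied by Theorem \ref{fixed}; let $\bar{u}^n,\tilde{\bar{u}}^n$ be the resulting time-periodic solution of \eqref{initial-1}--\eqref{fractional-3} with discrete period $T_1$, and form the step functions $\bar{u}_\delta,\tilde{\bar{u}}_\delta,\bar{w}^i_\delta$ as in \eqref{step1}--\eqref{step3}. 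Since the whole orbit stays in $\tilde{U}_R$ (Theorem \ref{time-global}) and $R=R_0(\Omega,f)$ depends only on $\Omega$ and $\|f\|_{L^2([0,1];L^2(\Omega)^3)}$, applying the estimates of Theorem \ref{L2-estimate} period by period yields bounds for $\|\bar{u}^n\|_{\Omega_h}$, $\|\tilde{\bar{u}}^n\|_{\Omega_h}$ and for $\sum_n\|D^+_j\tilde{\bar{u}}^{n+1}\|_{\Omega_h}^2\tau$ over each interval $[0,T]$ that are uniform in $\delta$; hence the step functions are bounded in $L^\infty([0,\infty);L^2(\Omega)^3)$ and in $L^2_{\rm loc}([0,\infty);H^1(\Omega)^3)$ uniformly in $\delta$. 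The point is that the time-global solvability of Theorems \ref{nonperiodic-time-global}--\ref{time-global} is what makes these bounds uniform on \emph{all} of $[0,\infty)$, not merely on a fixed window.

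Next I would run the compactness argument of \cite{Kuroki-Soga}, recalled here as Theorems \ref{weak convergence}--\ref{Leray-Hopf}, on the fixed interval $[0,1]$. Imposing the scaling condition of Theorem \ref{strong-convergence} along the sequence, Theorem \ref{weak convergence} gives a subsequence $\delta\to0$ and a limit $\bar{v}\in L^\infty([0,1];L^2_\sigma(\Omega))\cap L^2([0,1];H^1_{0,\sigma}(\Omega))$ with $\bar{u}_\delta\wto\bar{v}$, $\tilde{\bar{u}}_\delta\wto\bar{v}$ and $\bar{w}^i_\delta\wto\partial_{x_i}\bar{v}$ in $L^2([0,1];L^2(\Omega)^3)$, and Theorem \ref{strong-convergence} upgrades the convergence of $\tilde{\bar{u}}_\delta$ to strong in $L^2([0,1];L^2(\Omega)^3)$. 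Extend $\bar{v}$ to $[0,\infty)$ by $\bar{v}(t+1,\cdot):=\bar{v}(t,\cdot)$. Because the discrete solutions are exactly $T_1$-periodic, the step functions $\bar{u}_\delta,\tilde{\bar{u}}_\delta,\bar{w}^i_\delta$ are exactly $1$-periodic in $t$, so the convergence on $[0,1]$ transfers \emph{for the same subsequence} to convergence on every $[0,T]$ (to the periodic extension of $\bar{v}$); in particular no diagonal extraction is needed, in contrast with Theorem \ref{time-global-Leray-Hopf}. The relation $\bar{v}(t+1,\cdot)=\bar{v}(t,\cdot)$ holds by construction.

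It remains to verify that $\bar{v}$ solves the weak formulation of \eqref{NSp}. Take any $\phi\in C^\infty_0((0,\infty);C^\infty_{0,\sigma}(\Omega))$ and choose $T$ with $\supp\phi\subset(0,T)\times\Omega$. On $[0,T]$ the passage to the limit in the discrete Navier--Stokes equations is exactly the computation of Theorem \ref{Leray-Hopf} (the convective term via the strong convergence of $\tilde{\bar{u}}_\delta$ together with the weak convergence of $\bar{w}^i_\delta$, the viscous and forcing terms by weak convergence, the discrete time-derivative term by summation by parts in $n$); since $\phi(0,\cdot)=0$, the initial-data term drops out and what remains is precisely the required identity, so $\bar{v}$ is a time-periodic Leray--Hopf weak solution of \eqref{NSp} with period $1$. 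For the second assertion I keep $R=R_0(\Omega,f)$: the bound for $\|\bar{u}^n\|_{\Omega_h}$ over a period coming from \eqref{411}--\eqref{412} is $\le R_0(\Omega,f)+\|f\|_{L^2([0,1];L^2(\Omega)^3)}$, i.e.\ $O(\|f\|_{L^2([0,1];L^2(\Omega)^3)})$ with a constant depending only on $\Omega$; passing to the limit (weak lower semicontinuity of the norm, or directly from the step-function bounds) this is inherited by $\|\bar{v}\|_{L^\infty([0,1];L^2(\Omega)^3)}$. Hence both the discrete time-periodic solutions and their limit $\bar{v}$ have $L^2$-norm tending to $0$ as $\|f\|_{L^2([0,1];L^2(\Omega)^3)}\to0$, which gives the claimed family.

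The main obstacle is not the analysis itself — the interior estimates, the weak/strong compactness, and the limit passage in the weak form are imported essentially verbatim from \cite{Kuroki-Soga} — but rather two structural points: securing a priori bounds uniform in $\delta$ over $[0,\infty)$, which is exactly what the fixed point lying in $\tilde{U}_R$ provides via Theorems \ref{nonperiodic-time-global}--\ref{time-global}, and transporting the \emph{exact} discrete $T_1$-periodicity through a subsequential limit so that one and the same subsequence works on all $[0,T]$ and so that test functions supported in $(0,\infty)$ annihilate the initial-data term. I expect the second point to require the most care, and it is handled precisely by the $1$-periodicity of the step functions.
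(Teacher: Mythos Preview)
Your proposal is correct and follows essentially the same route as the paper: extract weak/strong convergence on $[0,1]$ via the Kuroki--Soga compactness machinery, periodically extend the limit, use the exact $1$-periodicity of the step functions to carry the same subsequence to every $[0,T]$ without a diagonal argument, and pass to the limit in the weak form with test functions supported in $(0,\infty)$; the smallness statement likewise comes from choosing $R=R_0(\Omega,f)$. Your write-up is in fact more detailed than the paper's, which simply records these steps in the paragraph preceding the theorem.
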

\subsection{Stability of small solution}

We prove that a ``small'' solution is exponentially stable. 
Suppose that there exists  a small solution in the sense of $L^\infty$, i.e.,  a solution $\tilde{u}^n$, $u^n$ of the discrete Navier-Stokes equations \eqref{initial-1}-\eqref{fractional-3}  with $\tilde{u}^0\in\tilde{U}_R$,  $R\ge R_0(\Omega,f)$ such that  
\begin{eqnarray}\label{small}
|\tilde{u}^n(x)|\le\beta_0:=\frac{A^{-1}}{4}\mbox{\quad for all $x\in\Omega_h$ and $n\in\N$},
\end{eqnarray}
where $A$ is the constant from the Poincar\'e type inequality I. We remark that it is not clear when one can find such a solution (nevertheless, one could check with a computer).  
Let $\tilde{w}^n,w^n$ be an arbitrary solution of the discrete Navier-Stokes equations with $\tilde{w}^0\in\tilde{U}_R$ ($\tilde{w}^0\neq\tilde{u}^0$). 
\begin{Thm}\label{small2}
We have 
$$ \norm\tilde{w}^{n}-\tilde{u}^{n}\norm_{\Omega_h}\le e^{-\frac{A^{-2}}{2}n\tau}\norm \tilde{w}^0-\tilde{u}^0\norm_{\Omega_h}\mbox{ for all $n\in\N\cup\{0\}$.}$$
\end{Thm}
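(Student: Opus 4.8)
The plan is to set up a recurrence inequality for $\norm\tilde{w}^{n}-\tilde{u}^{n}\norm_{\Omega_h}$ in the spirit of the computation in Theorem \ref{implicit} and Theorem \ref{nonperiodic-time-global}, exploiting the smallness \eqref{small} to absorb the nonlinear difference term into the dissipation produced by $D_j^2$. First I would subtract the discrete Navier--Stokes equations \eqref{fractional-1}--\eqref{fractional-3} for $\tilde{w},w$ and for $\tilde{u},u$ to obtain, for $x\in\Omega_h\setminus\partial\Omega_h$, a relation of the form
\begin{eqnarray}\label{41414141}
\frac{\tilde{b}^{n+1}(x)-b^n(x)}{\tau}&=&-\frac12\sum_{j=1}^3\Big(w^n_j(x-he^j)D_j\tilde{b}^{n+1}(x-he^j)+w^n_j(x+he^j)D_j\tilde{b}^{n+1}(x+he^j)\Big)\\\nonumber
&&-\frac12\sum_{j=1}^3\Big(b^n_j(x-he^j)D_j\tilde{u}^{n+1}(x-he^j)+b^n_j(x+he^j)D_j\tilde{u}^{n+1}(x+he^j)\Big)\\\nonumber
&&+\sum_{j=1}^3 D_j^2\tilde{b}^{n+1}(x),
\end{eqnarray}
with $\tilde{b}^n:=\tilde{w}^n-\tilde{u}^n$, $b^n:=w^n-u^n$, and $\tilde{b}^{n+1}|_{\partial\Omega_h}=0$, $b^{n+1}=P_h\tilde{b}^{n+1}$. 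Here $f^n$ cancels since both solutions share the same external force, which is the crucial point that makes the estimate homogeneous in $\tilde{b}$. Call the three sums on the right (i), (ii), (iii).

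Next I would take the $(\cdot,\cdot)_{\Omega_h}$-inner product of \eqref{41414141} with $\tilde{b}^{n+1}$. For term (i) the computation is verbatim the one for ``(i)'' in the proof of Theorem \ref{implicit} with $u^n$ replaced by $w^n$: since $\D\cdot w^n=0$ on $\Omega_h\setminus\partial\Omega_h$ and $\tilde{b}^{n+1}=0$ on $\partial\Omega_h$, this contributes $0$. For term (iii) the same summation-by-parts as in ``(ii)'' of that proof gives $-\sum_{j=1}^3\norm D_j^+\tilde{b}^{n+1}\norm_{\Omega_h}^2$, a pure dissipation. The only genuinely new term is (ii); using $|\tilde{u}^{n+1}(x)|\le\beta_0=A^{-1}/4$ and, more importantly, that $D_j\tilde{u}^{n+1}$ appears but smallness is in $L^\infty$ of $\tilde{u}$, I would instead pair (ii) against $\tilde{b}^{n+1}$, shift the index so the discrete derivative falls on $\tilde{b}^{n+1}$ via summation by parts (the boundary terms vanish because $\tilde{b}^{n+1}|_{\partial\Omega_h}=0$), and bound the result by $\beta_0\sum_j\norm b^n\norm_{\Omega_h}\norm D_j^+\tilde{b}^{n+1}\norm_{\Omega_h}$. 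Combining, and using $\norm b^n\norm_{\Omega_h}\le\norm\tilde{b}^n\norm_{\Omega_h}$ (the norm-nonincreasing property of $P_h$ from Theorem \ref{22estimate}), Young's inequality, and the discrete Poincar\'e inequality I applied to $\tilde{b}^{n+1}$ (legitimate since $\tilde{b}^{n+1}|_{\partial\Omega_h}=0$), I expect to reach
\begin{eqnarray*}
\norm\tilde{b}^{n+1}\norm_{\Omega_h}^2+\tau A^{-2}\norm\tilde{b}^{n+1}\norm_{\Omega_h}^2\le \norm\tilde{b}^{n+1}\norm_{\Omega_h}\norm\tilde{b}^n\norm_{\Omega_h},
\end{eqnarray*}
or an analogous first-order version, whence $\norm\tilde{b}^{n+1}\norm_{\Omega_h}\le (1+\tau A^{-2})^{-1}\norm\tilde{b}^n\norm_{\Omega_h}$ (the factor $A^{-2}/2$ rather than $A^{-2}$ in the statement leaves comfortable room for the constants lost to Young's inequality in handling (ii)).

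Finally, iterating the one-step contraction and using $(1+\tau A^{-2}/2)^{-n}\le e^{-\frac{A^{-2}}{2}n\tau}$ gives the claim. The main obstacle is the bookkeeping in term (ii): one must decide whether to let the discrete derivative act on $\tilde{b}^{n+1}$ or on $\tilde{u}^{n+1}$, and ensure that after the shift the only surviving factor carrying smallness is $\|\tilde{u}^{n+1}\|_{L^\infty}\le\beta_0$ multiplying a product $\norm b^n\norm_{\Omega_h}\norm D_j^+\tilde{b}^{n+1}\norm_{\Omega_h}$, so that the precise value $\beta_0=A^{-1}/4$ is exactly what is needed to dominate it by the dissipation after one application of Young's inequality and Poincar\'e; any looser choice of $\beta_0$ would not close the induction. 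Everything else is a direct transcription of estimates already carried out in Section 2.
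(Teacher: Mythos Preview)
Your proposal is correct and follows the paper's approach almost exactly: the difference equation \eqref{41414141}, the vanishing of term (i) via $\D\cdot w^n=0$, the summation-by-parts on (ii) that moves the discrete derivative onto $\tilde{b}^{n+1}$ and leaves the factor $\|\tilde{u}^{n+1}\|_{L^\infty}\le\beta_0$ (this step also uses $\D\cdot b^n=0$, which you should note explicitly), and the dissipation from (iii) are all handled as in the paper.

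The only genuine difference is in how the recurrence is closed. After arriving at
\[
\norm\tilde{b}^{n+1}\norm_{\Omega_h}\le \norm\tilde{b}^n\norm_{\Omega_h}
-\tau\Big(\sum_{j}\norm D_j^+\tilde{b}^{n+1}\norm_{\Omega_h}\Big)
\Big(A^{-1}-\beta_0\,\frac{\norm\tilde{b}^n\norm_{\Omega_h}}{\norm\tilde{b}^{n+1}\norm_{\Omega_h}}\Big),
\]
the paper does \emph{not} use Young's inequality; instead it argues by a dichotomy on the ratio $\norm\tilde{b}^{n+1}\norm_{\Omega_h}/\norm\tilde{b}^{n}\norm_{\Omega_h}$, showing that if this ratio exceeded $(1+\tau A^{-2})^{-1}$ then the bracket would be at least $A^{-1}/2$, and Poincar\'e would force the ratio below $(1+\tfrac32\tau A^{-2})^{-1}$, a contradiction. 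Your route via Young's inequality also works, but the clean inequality you wrote is not quite what one gets: Young leaves an additional $O(\tau)\norm\tilde{b}^n\norm_{\Omega_h}^2$ on the right, and one must then solve a quadratic in the ratio to obtain the contraction factor. Both arguments require $\tau$ sufficiently small and exploit $\beta_0=A^{-1}/4$ with room to spare; the paper's dichotomy avoids the algebra of the quadratic at the cost of an indirect argument.
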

\begin{proof}
Set $\tilde{b}^n:=\tilde{w}^n-\tilde{u}^n$, $b^n:=w^n-u^n$. 
Observe that for $x\in\Omegain$,
\begin{eqnarray}\label{41414141}
&&\tilde{b}^{n+1}(x)
=b^n(x) \\\nonumber
&&\quad -\underline{ \frac{\tau}{2}\sum_{j=1}^3   
\Big(w^n_j(x-he^j)D_j\tilde{b}^{n+1}(x-he^j)+w^n_j(x+he^j)D_j\tilde{b}^{n+1}(x+he^j)\Big)}_{\rm (i)} \\\nonumber
&&\quad-\underline{\frac{\tau}{2}\sum_{j=1}^3 \Big(b^n_j(x-he^j)D_j\tilde{u}^{n+1}(x-he^j)+b^n_j(x+he^j)D_j\tilde{u}^{n+1}(x+he^j) \Big)}_{\rm (ii)}\\\nonumber
&&\quad+\underline{\tau\sum_{j=1}^3D^2_j\tilde{b}^{n+1}(x)}_{\rm (iii)}.
\end{eqnarray}
Since $w^n$ is discrete-divergence-free, we have 
\begin{eqnarray*}  
({\rm (i)},\tilde{b}^{n+1})_{\Omega_h}=0.
\end{eqnarray*}
Summation by part yields 
\begin{eqnarray*}
-\sum_{x\in\Omega_h\setminus\partial\Omega_h}({\rm ii})\cdot\tilde{b}^{n+1}(x)h^3&=&-\frac{\tau}{2}\sum_{j=1}^3\sum_{x\in\Omegain} \Big(b^n_j(x-he^j)D_j\tilde{u}^{n+1}(x-he^j)\\
&& +b^n_j(x+he^j)D_j\tilde{u}^{n+1}(x+he^j) \Big)\cdot\tilde{b}^{n+1}(x)h^3\\
&=& \underline{\frac{\tau }{4h}\sum_{j=1}^3\sum_{x\in\Omegain}b^n_j(x)\tilde{u}^{n+1}(x-he^j)\cdot\tilde{b}^{n+1}(x+he^j)h^3} \\
&&\underline{-\frac{\tau }{4h} \sum_{j=1}^3\sum_{x\in\Omegain}b^n_j(x)\tilde{u}^{n+1}(x+he^j)\cdot\tilde{b}^{n+1}(x-he^j)h^3}_{\rm (a)} \\
&& +   \frac{\tau}{2}\sum_{x\in\Omegain} \Big(\sum_{j=1}^3D_jb^n_j(x)\Big)\tilde{u}^{n+1}(x)\cdot\tilde{b}^{n+1}(x) h^3,
\end{eqnarray*}
 \begin{eqnarray*}
{\rm (a)}  &= &\frac{\tau}{4h}\sum_{j=1}^3\sum_{x\in\Omegain}
\Big( 
b^n_j(x)\tilde{u}^{n+1}(x-he^j)\cdot\tilde{b}^{n+1}(x+he^j) \\
&&  -b^n_j(x)\tilde{u}^{n+1}(x-he^j)\cdot\tilde{b}^{n+1}(x-he^j)\\
&& +b^n_j(x)\tilde{u}^{n+1}(x-he^j)\cdot\tilde{b}^{n+1}(x-he^j)
-b^n_j(x)\tilde{u}^{n+1}(x+he^j)\cdot\tilde{b}^{n+1}(x-he^j)
\Big)h^3\\
&=&\frac{\tau}{2}\sum_{j=1}^3\sum_{x\in\Omegain}
b^n_j(x)\tilde{u}^{n+1}(x-he^j)\cdot D_j\tilde{b}^{n+1}(x)h^3\\
&&+ \frac{\tau}{4h}\sum_{j=1}^3\sum_{x\in\Omegain}
\Big( 
b^n_j(x+he^j)\tilde{u}^{n+1}(x)\cdot\tilde{b}^{n+1}(x)\\
&&\qquad\qquad\qquad\qquad\qquad\qquad  -b^n_j(x-he^j)\tilde{u}^{n+1}(x)\cdot\tilde{b}^{n+1}(x-2he^j)
\Big)h^3\\
&=&\frac{\tau}{2}\sum_{j=1}^3\sum_{x\in\Omegain}
b^n_j(x)\tilde{u}^{n+1}(x-he^j)\cdot D_j\tilde{b}^{n+1}(x)h^3\\
&&+ \frac{\tau}{4h}\sum_{j=1}^3\sum_{x\in\Omegain}
\Big( 
b^n_j(x+he^j)\tilde{u}^{n+1}(x)\cdot\tilde{b}^{n+1}(x)\\
&&-b^n_j(x+he^j)\tilde{u}^{n+1}(x)\cdot\tilde{b}^{n+1}(x-2he^j)+b^n_j(x+he^j)\tilde{u}^{n+1}(x)\cdot\tilde{b}^{n+1}(x-2he^j)\\
&&  -b^n_j(x-he^j)\tilde{u}^{n+1}(x)\cdot\tilde{b}^{n+1}(x-2he^j)
\Big)h^3\\
&=&\frac{\tau}{2}\sum_{j=1}^3\sum_{x\in\Omegain}
b^n_j(x)\tilde{u}^{n+1}(x-he^j)\cdot D_j\tilde{b}^{n+1}(x)h^3\\
&&+ \frac{\tau}{2}\sum_{j=1}^3\sum_{x\in\Omegain}
b^n_j(x+he^j)\tilde{u}^{n+1}(x)\cdot D_j\tilde{b}^{n+1}(x-he^j)h^3\\
&&+\frac{\tau}{2}\sum_{x\in\Omegain}\Big(\sum_{j=1}^3 D_jb^n_j(x)\Big)\tilde{u}^{n+1}(x)\cdot\tilde{b}^{n+1}(x-2he^j)h^3. 
 \end{eqnarray*}
Since $b^n$ is discrete-divergence-free, we obtain with \eqref{small}, 
\begin{eqnarray*}
-({\rm (ii)},\tilde{b}^{n+1})_{\Omega_h}
&\le& \tau\beta_0\norm b^n\norm_{\Omega_h}\sum_{j=1}^3\norm D_j\tilde{b}^{n+1}\norm_{\Omega_h}\\
&\le&  \tau\beta_0\norm \tilde{b}^n\norm_{\Omega_h}\sum_{j=1}^3\norm D^+_j\tilde{b}^{n+1}\norm_{\Omega_h} .
\end{eqnarray*}
The Poincar\'e type inequality I implies 
\begin{eqnarray*}
({\rm (iii)},\tilde{b}^{n+1})_{\Omega_h}=-\tau\sum_{j=1}^3\norm D^+_j\tilde{b}^{n+1}\norm^2_{\Omega_h}\le-\tau A^{-1}\sum_{j=1}^3\norm D^+_j\tilde{b}^{n+1}\norm_{\Omega_h}\norm \tilde{b}^{n+1}\norm_{\Omega_h}.
\end{eqnarray*}
Hence, with $\norm b^n\norm_{\Omega_h}\le\norm \tilde{b}^n\norm_{\Omega_h}$, we obtain 
\begin{eqnarray*}
\norm \tilde{b}^{n+1}\norm_{\Omega_h}&\le& \norm \tilde{b}^n\norm_{\Omega_h}
+ \tau\beta_0\frac{\norm \tilde{b}^n\norm_{\Omega_h}}{\norm \tilde{b}^{n+1}\norm_{\Omega_h}}\sum_{j=1}^3\norm D_j\tilde{b}^{n+1}\norm_{\Omega_h}-\tau A^{-1}\sum_{j=1}^3\norm D^+_j\tilde{b}^{n+1}\norm_{\Omega_h}\\
&=& \norm \tilde{b}^n\norm_{\Omega_h}
- \tau\sum_{j=1}^3\norm D^+_j\tilde{b}^{n+1}\norm_{\Omega_h}\Big( A^{-1}-\beta_0\frac{\norm \tilde{b}^n\norm_{\Omega_h}}{\norm \tilde{b}^{n+1}\norm_{\Omega_h}}\Big).
\end{eqnarray*}
Suppose that for $n\ge0$,
$$\frac{ \norm \tilde{b}^{n+1}\norm_{\Omega_h}}{\norm \tilde{b}^{n}\norm_{\Omega_h}}>\frac{1}{1+\tau A^{-2}}.$$
Then, for any sufficiently small $\tau>0$, we have with the Poincar\'e type inequality I,
\begin{eqnarray*}
\norm \tilde{b}^{n+1}\norm_{\Omega_h}&\le&  \norm \tilde{b}^n\norm_{\Omega_h}
- \tau\sum_{j=1}^3\norm D^+_j\tilde{b}^{n+1}\norm_{\Omega_h}\{ A^{-1}-\beta_0(1+\tau A^{-2})    \}\\
&\le& \norm \tilde{b}^n\norm_{\Omega_h}
- \tau\sum_{j=1}^3\norm D^+_j\tilde{b}^{n+1}\norm_{\Omega_h}\frac{A^{-1}}{2}   \\
&\le& \norm \tilde{b}^n\norm_{\Omega_h}
-  \frac{3 }{2}\tau A^{-2}\norm \tilde{b}^{n+1}\norm_{\Omega_h},  
\end{eqnarray*}
which leads to 
\begin{eqnarray*}
\frac{ \norm \tilde{b}^{n+1}\norm_{\Omega_h}}{\norm \tilde{b}^{n}\norm_{\Omega_h}}\le \frac{1}{1+\frac{3}{2}\tau A^{-2}}.
\end{eqnarray*}
This is a contradiction. Therefore, we obtain 
$$ \norm \tilde{b}^{n+1}\norm_{\Omega_h}\le  \frac{1}{1+\tau A^{-2}} \norm \tilde{b}^{n}\norm_{\Omega_h}\le e^{-\frac{A^{-2}}{2}(n+1)\tau}\norm \tilde{b}^{0}\norm_{\Omega_h}\mbox{ for $n=0,1,\cdots$.}$$
\end{proof}
\begin{Cor}\label{nasu}
Suppose that the discrete Navier-Stokes equations possess a solution  satisfying \eqref{small}. Then, a time-periodic solution found in Theorem \ref{fixed} is necessarily unique and bounded by $\beta_0$ for all $x\in\Omega_h$ and $n\in\N$. Furthermore, any other solutions of  the discrete Navier-Stokes equations tend to the time-periodic solution as time goes to infinity.  
\end{Cor}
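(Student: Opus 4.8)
The plan is to deduce Corollary \ref{nasu} entirely from Theorem \ref{small2}, Theorem \ref{fixed} and Theorem \ref{nonperiodic-time-global}, without reopening any estimate. First I would settle uniqueness. Suppose $\bar{u}^n,\tilde{\bar u}^n$ is the time-periodic solution produced by Theorem \ref{fixed} (with $R\ge R_0(\Omega,f)$) and suppose $\hat{u}^n,\tilde{\hat u}^n$ is another time-periodic solution; both have initial data in $\tilde U_R$, so Theorem \ref{small2} applies with $\tilde u^n:=\tilde{\bar u}^n$ (the solution satisfying \eqref{small}) and $\tilde w^n:=\tilde{\hat u}^n$. It gives
\begin{eqnarray*}
\norm \tilde{\hat u}^{\,n}-\tilde{\bar u}^{\,n}\norm_{\Omega_h}\le e^{-\frac{A^{-2}}{2}n\tau}\norm \tilde{\hat u}^{\,0}-\tilde{\bar u}^{\,0}\norm_{\Omega_h}.
\end{eqnarray*}
By time-periodicity with period $1=\tau T_1$, the left-hand side at $n=mT_1$ equals $\norm \tilde{\hat u}^{\,0}-\tilde{\bar u}^{\,0}\norm_{\Omega_h}$ for every $m\in\N$, while the right-hand side is $e^{-\frac{A^{-2}}{2}m}\norm \tilde{\hat u}^{\,0}-\tilde{\bar u}^{\,0}\norm_{\Omega_h}\to0$. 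Hence $\tilde{\hat u}^{\,0}=\tilde{\bar u}^{\,0}$, and since the discrete Navier-Stokes recurrence \eqref{initial-1}-\eqref{fractional-3} is (uniquely) solved forward from the initial datum by Theorem \ref{L2-estimate}/Theorem \ref{implicit}, the two solutions coincide for all $n$. This proves uniqueness of the time-periodic solution.

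Next I would establish the $L^\infty$-bound $|\bar u^n(x)|\le\beta_0$. By hypothesis \emph{some} solution of the discrete Navier-Stokes equations satisfies \eqref{small}; call it $\tilde u^n,u^n$ with $\tilde u^0\in\tilde U_R$. Applying Theorem \ref{small2} with $\tilde w^n$ the (unique) time-periodic solution and $\tilde u^n$ this small solution, the same periodicity trick at $n=mT_1$ forces $\tilde w^0=\tilde u^0$, so the small solution \emph{is} the time-periodic solution; in particular \eqref{small} holds for it, i.e. $|\tilde{\bar u}^{\,n}(x)|\le\beta_0$ for all $x\in\Omega_h$, $n\in\N$. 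For $\bar u^n=P_h\tilde{\bar u}^{\,n+\text{shift}}$ one could note $\norm \bar u^n\norm_{\Omega_h}\le\norm\tilde{\bar u}^{\,n}\norm_{\Omega_h}$, which controls the $L^2$-size but not pointwise; the cleanest route is simply to identify the time-periodic solution with the solution satisfying \eqref{small}, so both $\tilde{\bar u}^{\,n}$ and $\bar u^n$ obey the stated bound by assumption (if only $\tilde u^n$ were assumed bounded, restate the bound for $\tilde{\bar u}^{\,n}$ only).

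Finally, for the long-time behavior: let $\tilde w^n,w^n$ be an arbitrary solution with $\tilde w^0\in\tilde U_R$. Theorem \ref{small2}, applied with the time-periodic solution in the role of the small solution $\tilde u^n$ (legitimate by the previous step), gives directly
\begin{eqnarray*}
\norm \tilde w^{\,n}-\tilde{\bar u}^{\,n}\norm_{\Omega_h}\le e^{-\frac{A^{-2}}{2}n\tau}\norm \tilde w^{\,0}-\tilde{\bar u}^{\,0}\norm_{\Omega_h}\to0\quad\text{as }n\to\infty,
\end{eqnarray*}
and $\norm w^{\,n}-\bar u^{\,n}\norm_{\Omega_h}\le\norm \tilde w^{\,n}-\tilde{\bar u}^{\,n}\norm_{\Omega_h}$ by the $P_h$-contraction, so $w^n$ converges to the time-periodic orbit as well. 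The only genuinely delicate point is the logical bookkeeping that lets one \emph{reuse} Theorem \ref{small2} with the roles of ``small solution'' and ``arbitrary solution'' interchanged; once one observes that \eqref{small} is preserved under the identification forced by the periodicity argument, everything else is a one-line consequence, and I do not expect any new estimate to be needed.
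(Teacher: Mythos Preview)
Your argument has a circularity problem that undermines both the uniqueness step and the $L^\infty$-bound step.

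In the first paragraph you apply Theorem \ref{small2} with $\tilde u^n:=\tilde{\bar u}^{\,n}$, calling the latter ``the solution satisfying \eqref{small}''. But $\tilde{\bar u}^{\,n}$ was introduced as the time-periodic solution from Theorem \ref{fixed}; at this stage nothing says it obeys \eqref{small}. Theorem \ref{small2} needs the solution in the role of $\tilde u^n$ to satisfy the $L^\infty$ bound \eqref{small}, so the application is not yet legitimate. The paper avoids this by comparing \emph{each} of the two hypothetical periodic solutions against the (possibly non-periodic) small solution $\tilde u^n$ given by hypothesis: Theorem \ref{small2} yields $\norm\tilde{\bar u}^{\,n}-\tilde u^n\norm_{\Omega_h}\to0$ and $\norm\tilde{\hat u}^{\,n}-\tilde u^n\norm_{\Omega_h}\to0$, hence $\norm\tilde{\bar u}^{\,n}-\tilde{\hat u}^{\,n}\norm_{\Omega_h}\to0$ by the triangle inequality, and then periodicity of the difference forces it to vanish.

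In the second paragraph your ``periodicity trick'' breaks down. From Theorem \ref{small2} you get $\norm\tilde w^{\,mT_1}-\tilde u^{\,mT_1}\norm_{\Omega_h}\to0$; periodicity of $\tilde w^n$ turns the left-hand side into $\norm\tilde w^{\,0}-\tilde u^{\,mT_1}\norm_{\Omega_h}$, \emph{not} $\norm\tilde w^{\,0}-\tilde u^{\,0}\norm_{\Omega_h}$, because the small solution $\tilde u^n$ is not assumed periodic. So you cannot conclude $\tilde w^0=\tilde u^0$, and in general the small solution need not coincide with the periodic one. The correct way to obtain the $L^\infty$-bound is to note that, for fixed $h$, the grid is finite, so $L^2$-convergence of $\tilde{\bar u}^{\,n}-\tilde u^n$ to $0$ implies pointwise convergence; combining $|\tilde u^n(x)|\le\beta_0$ with this and with periodicity of $\tilde{\bar u}^{\,n}$ gives $|\tilde{\bar u}^{\,n}(x)|\le\beta_0$ for all $n,x$. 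Once this is in hand, your third paragraph (reusing Theorem \ref{small2} with the periodic solution now playing the role of the small one) is correct.
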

\begin{proof}
In the proof of Theorem \ref{small2}, take a time-periodic solution as $\tilde{w}^n,w^n$. If there are two different time-periodic solutions, both of them must get arbitrarily close to $\tilde{u}^n,u^n$ satisfying \eqref{small} as $n\to\infty$, which is impossible. The other claims follow from Theorem \ref{small2}.
\end{proof}
Suppose that there exist initial data $v^0\in L^2_\sigma(\Omega)$ and a sequence $\delta\to0$ such that the solution $\tilde{u}^n$ of the discrete Navier-Stokes equations solved with $v^0$ satisfies \eqref{small} for each element $\delta$ of the sequence. 
Then, Theorem \ref{periodicLH} and Corollary \ref{nasu} imply that there exists  a time-periodic Leray-Hopf weak solution that is bounded by $\beta_0$ in the $L^\infty$-sense. 
Furthermore,  since the decay estimate  given in Theorem \ref{small2} is independent of the size of $\delta=(\tau,h)$, we have a stability result on a ``small'' Leray-Hopf weak solution: 
Let $\{\tilde{u}_\delta\}$ be a convergent sequence  of the step functions derived from the above $\tilde{u}^n$, where  $\{\tilde{u}_\delta\}$  tends to a time-global Leray-Hopf weak solution $v$ with initial data $v^0$ as $\delta\to0$. Let $w$ be any time-global Leray-Hopf weak solution with initial data $w^0\in L^2_\sigma(\Omega)$ that can be a limit of  a sequence $\{\tilde{w}_\delta\}$ derived from the discrete Navier-Stokes equations solved with $w^0$, where $\delta\to0$ is the same sequence as the  above. Note that we do not suppose any regularity of $v$ except for the $L^\infty$-bound coming from \eqref{small}. 
\begin{Thm}
We have 
$$\norm w-v\norm_{L^2([t,\infty);L^2(\Omega)^3)}\le \frac{2}{A^{-2}}e^{-\frac{A^{-2}}{2}t}\norm w^0-v^0\norm_{L^2(\Omega)^3}\mbox{\quad for any $t>0$}.$$
\end{Thm}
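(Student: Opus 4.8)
The plan is to pass the discrete contraction bound of Theorem~\ref{small2} to the limit $\delta\to0$ along the common subsequence supplied in the hypothesis. Write $\tilde u^n$ for the ``small'' discrete solution with initial data $v^0$ (so that \eqref{small} holds for every $\delta$ in the subsequence), $\tilde w^n$ for the discrete solution with initial data $w^0$, and $\tilde b^n:=\tilde w^n-\tilde u^n$. Theorem~\ref{small2} gives
\[
\norm\tilde b^{n}\norm_{\Omega_h}\le e^{-\frac{A^{-2}}{2}n\tau}\norm\tilde b^{0}\norm_{\Omega_h}\qquad(n\in\N\cup\{0\}).
\]
Since $\tilde u^0,\tilde w^0$ are the cell-averages of $v^0,w^0$, their difference is the cell-average of $w^0-v^0$, so $\norm\tilde b^{0}\norm_{\Omega_h}\le\norm w^0-v^0\norm_{L^2(\Omega)^3}$ with equality in the limit $h\to0$ (the contraction already used in \eqref{410}).

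Next I would rewrite this for the step functions $\tilde u_\delta,\tilde w_\delta$ built from $\tilde u^n,\tilde w^n$ as in \eqref{step2}: on the slab $s\in[n\tau,(n+1)\tau)$ one has $\norm\tilde w_\delta(s,\cdot)-\tilde u_\delta(s,\cdot)\norm_{L^2(\Omega)^3}=\norm\tilde b^{n+1}\norm_{\Omega_h}$, because the half-open cubes $C_h^+(y)$ partition $\Omega$ up to a null set among $y\in\Omega_h$ and each lies in $\Omega$. Fixing $t>0$ and $T>t$, integrating over $[t,T]$ and summing a geometric series over the slabs meeting $[t,T]$ gives
\[
\norm\tilde w_\delta-\tilde u_\delta\norm_{L^2([t,T];L^2(\Omega)^3)}^2
\le\frac{\tau}{1-e^{-A^{-2}\tau}}\,e^{-A^{-2}t}\,\norm\tilde b^{0}\norm_{\Omega_h}^2 ,
\]
and since $\tau/(1-e^{-A^{-2}\tau})\to(A^{-2})^{-1}$ as $\tau\to0$ (the one-index shift in \eqref{step2} costing only a factor $e^{A^{-2}\tau}\to1$), the right-hand side is, for $\delta$ small, bounded by $(2/A^{-2})e^{-A^{-2}t}\norm\tilde b^{0}\norm_{\Omega_h}^2$; taking square roots produces the prefactor of the statement.

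Then I would pass to the limit. By the paragraph preceding the statement together with Theorem~\ref{time-global-Leray-Hopf} (and Corollary~\ref{nasu}), along the chosen subsequence $\tilde u_\delta\to v$ and $\tilde w_\delta\to w$ strongly in $L^2([0,T];L^2(\Omega)^3)$ for every fixed $T>0$, hence $\tilde w_\delta-\tilde u_\delta\to w-v$ strongly in $L^2([t,T];L^2(\Omega)^3)$. Passing to the limit in the last inequality --- whose right-hand side does not depend on $T$ --- yields
\[
\norm w-v\norm_{L^2([t,T];L^2(\Omega)^3)}\le\frac{2}{A^{-2}}\,e^{-\frac{A^{-2}}{2}t}\,\norm w^0-v^0\norm_{L^2(\Omega)^3},
\]
and letting $T\to\infty$ (monotone convergence of $\int_t^T$) gives the assertion.

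The main obstacle is the order in which the limits are taken: the convergence furnished by the constructions of Sections~2 and~4 is only local in time, so one cannot insert $T=\infty$ directly into the strong-$L^2$ convergence. The remedy is precisely the sequencing above --- first fix $T$, use strong $L^2([t,T])$-convergence against a discrete bound uniform in $T$, and only afterwards send $T\to\infty$. The remaining items --- the geometric bookkeeping, the slab index shift, and the passage from the squared to the unsquared norm --- are routine and are comfortably covered by the slack in the constant $2/A^{-2}$.
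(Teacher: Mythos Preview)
Your argument is essentially the paper's own proof: fix $t$ and $T$, transfer the contraction of Theorem~\ref{small2} to the step functions, pass to the limit via the strong $L^2([t,T];L^2)$ convergence assumed just before the theorem, and then let $T\to\infty$. The only cosmetic difference is that you sum the squared geometric series directly, whereas the paper uses the cruder $\ell^2\le\ell^1$ step $\big(\sum_n\|\tilde b^{n+1}\|_{\Omega_h}^2\tau\big)^{1/2}\le\sum_n\|\tilde b^{n+1}\|_{\Omega_h}\tau$ and then compares the Riemann sum with $\int_t^T e^{-\frac{A^{-2}}{2}s}\,ds$; that route is what actually produces the constant $2/A^{-2}$ in the statement.

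One small slip: after your square root you get the prefactor $\sqrt{2/A^{-2}}=\sqrt{2}\,A$, not $2/A^{-2}=2A^2$, so ``taking square roots produces the prefactor of the statement'' is not literally true (and $\sqrt{2}\,A\le 2A^2$ only when $A\ge 1/\sqrt{2}$). If you want the stated constant verbatim, simply insert the paper's $\ell^2\le\ell^1$ step before summing; otherwise your sharper bound with a different constant is perfectly acceptable as a proof of the same qualitative assertion.
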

 \begin{proof} 
Fix an arbitrary $t>0$. Take arbitrary small $\ep>0$ and large $T>t$. Since $\tilde{u}_\delta$ (resp., $\tilde{w}_\delta$) strongly converges to $v$ (resp., $w$) in $\norm \cdot \norm_{L^2([t,T];L^2(\Omega)^3)}$ as $\delta\to0$, we have with sufficiently small $\delta$ and Theorem \ref{small2},
\begin{eqnarray*}
&&\norm w-v\norm_{L^2([t,T];L^2(\Omega)^3)}\le 
\norm w-\tilde{w}_\delta\norm_{L^2([t,T];L^2(\Omega)^3)}+\norm \tilde{w}_\delta-\tilde{u}_\delta\norm_{L^2([t,T];L^2(\Omega)^3)}\\
&&\qquad +\norm \tilde{u}_\delta-v\norm_{L^2([t,T];L^2(\Omega)^3)}\\
&&\quad \le \ep+\norm \tilde{w}_\delta(0,\cdot)-\tilde{u}_\delta(0,\cdot)\norm_{L^2(\Omega)^3}\sum_{n_t\le n<n_T} e^{-\frac{A^{-2}}{2}(n+1)\tau}\tau,
\end{eqnarray*}
where $n_t,n_T\in\N$ are such that $\tau n_t\le t<\tau n_t+\tau$, $\tau n_T\le T<\tau n_T+\tau$.    Taking smaller $\delta$ if necessary, we have 
\begin{eqnarray*}
\norm w-v\norm_{L^2([t,T];L^2(\Omega)^3)}&\le& 2\ep + \norm w^0-v^0\norm_{L^2(\Omega)^3}\int_t^T e^{-\frac{A^{-2}}{2}s}ds \\
&\le& 2\ep  +  \norm w^0-v^0\norm_{L^2(\Omega)^3}   \frac{2}{A^{-2}}e^{-\frac{A^{-2}}{2}t}. 
\end{eqnarray*}
Since $\ep$ and $T$ are arbitrary, we obtain our assertion.
 \end{proof}
\subsection{Error estimate for time-periodic solution in $C^3$-class}

Suppose that there exists a time-periodic (period $1$) solution $\bar{v}$ of the exact Navier-Stokes equations that belongs to the $C^3$-class and satisfies the smallness condition 
\begin{eqnarray}\label{small3}
\max_{x\in\Omega,\,\,t\in[0,1]}|\bar{v}(t,x)|\le \beta_0:=\frac{A^{-1}}{4},
\end{eqnarray}
where $A$ is the constant from the Poincar\'e type inequality I. 
We take $\delta=(\tau,h)$ which satisfies
\begin{eqnarray}\label{scale444}
\tau=\theta h^{\frac{3}{4}},\quad 0<\theta_0\le\theta\le\theta_1,
\end{eqnarray}
where  $\theta_1>\theta_0>0$ is some constant specified later. Let $\tilde{u}^n,u^n$ be any time-global solution of the discrete Navier-Stokes equations. 
\begin{Thm}\label{periodic-error}
There exist constants $\theta_1,\theta_0>0$ and $\beta>0$ for which we have with each $\delta$ sufficiently small,
\begin{eqnarray*}
\norm  \tilde{u}^n-\bar{v}(n\tau,\cdot)\norm_{\Omega_h}\le e^{-\frac{A^{-2}}{2}n\tau}\norm \tilde{u}^0-\bar{v}(0,\cdot)\norm_{\Omega_h}+\beta h^{\frac{1}{4}}
\mbox{\quad for all $n\in\N\cup\{0\}$.}
\end{eqnarray*}
\end{Thm}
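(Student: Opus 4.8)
The plan is to combine the local-in-time energy estimate behind Theorem~\ref{error-estimate} with the contraction mechanism behind Theorem~\ref{small2}. Write $\bar v^n:=\bar v(n\tau,\cdot)$ and let $\bar p$ be the pressure associated with $\bar v$; since $\bar v\in C^3(\bar\Omega)$ the equation forces $f\in C^1$ and $\bar p\in C^2(\bar\Omega)$, and $\bar p^n:=\bar p(n\tau,\cdot)$ is periodic in $n$. Put $\tilde b^n:=\tilde u^n-\bar v^n$, $b^n:=u^n-\bar v^n$. The Taylor expansions of Section~3 apply verbatim with $v$ replaced by $\bar v$, so the $\bar v$-version of \eqref{bbbbb} holds with $R^n(x)=-\tau\D\bar p^n(x)+O(\tau h)+O(\tau^2)$ on $\Omegain$; moreover $\nabla\cdot\bar v=0$, $\bar v|_{\partial\Omega}=0$ and Lemma~\ref{div-smooth} give $\norm \bar v^n-P_h\bar v^n\norm_{\Omegain}\le\beta h$, hence the analogue of \eqref{v}: $\norm b^n\norm_{\Omegain}\le\norm \tilde b^n\norm_{\Omegain}+\beta h$, and since $u^n|_{\partial\Omega_h}=0$ (so $b^n=-\bar v^n=O(h)$ there) also $\norm b^n\norm_{\Omega_h}\le\norm \tilde b^n\norm_{\Omegain}+\beta h+O(h^{3/2})$. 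First I would take the discrete inner product of the $\bar v$-version of \eqref{bbbbb} with $\tilde b^{n+1}$ over $\Omegain$ and then run an induction that is a ``contracting'' variant of the one in Subsection~3.2.

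The one essential change from the proof of Theorem~\ref{error-estimate} concerns the term~(ii). There it was bounded crudely, producing the growth factor $1+\beta_1\tau$, which would be fatal here; instead I would treat (ii) by the summation-by-parts identities used for the term~(ii) in the proof of Theorem~\ref{small2}, now retaining the boundary remainders generated by $b^n=O(h)$ and $\tilde b^{n+1}=-\bar v^{n+1}=O(h)$ on $\partial\Omega_h$. Using $|\bar v^{n+1}|\le\beta_0$ from \eqref{small3}, the discrete divergence-free property of $b^n$, Lemma~\ref{B.V.}, \eqref{hosoku} and $(\sharp\partial\Omega_h)h^2\le\beta$, this yields
\begin{eqnarray*}
-\sum_{x\in\Omegain}({\rm ii})\cdot\tilde b^{n+1}(x)h^3&\le& \tau\beta_0\norm b^n\norm_{\Omega_h}\sum_{j=1}^3\norm D_j^+\tilde b^{n+1}\norm_{\Omega_h}\\
&&+O(\tau h^\2)\sum_{j=1}^3\norm D_j^+\tilde b^{n+1}\norm_{\Omega_h}+O(\tau h).
\end{eqnarray*}
Together with \eqref{(i)} for (i) (still valid since $\D\cdot u^n=0$ on $\Omegain$), \eqref{(iii)} for (iii) (where \eqref{Poin-poin} brings in $A^{-1}$), the pressure estimate \eqref{(iiii)} for $-R^n$ (obtained as in Section~3 by re-substituting \eqref{bbbbb} into $\tau\sum\D\bar p^n\cdot\tilde b^{n+1}$), \eqref{order2} for the $b^n\cdot\tilde b^{n+1}$ term, the above bound on $\norm b^n\norm_{\Omega_h}$, and the scaling $\tau=\theta h^{3/4}$ (so that $O(h)=\tau\,O(h^{1/4})$ and $O(\tau/h^{1/2})=O(h^{1/4})$), one arrives at
\begin{eqnarray*}
\norm \tilde b^{n+1}\norm_{\Omegain}&\le&\norm \tilde b^n\norm_{\Omegain}+\frac{\beta_2\tau h^{\frac34}}{\norm \tilde b^{n+1}\norm_{\Omegain}}+\beta_3\tau h^{\frac14}\\
&&-\tau\Big(\sum_{j=1}^3\norm D_j^+\tilde b^{n+1}\norm_{\Omega_h}\Big)\Big(A^{-1}-\beta_0\frac{\norm \tilde b^n\norm_{\Omegain}}{\norm \tilde b^{n+1}\norm_{\Omegain}}-\frac{\beta_4 h^{\frac14}}{\norm \tilde b^{n+1}\norm_{\Omegain}}-\beta_5 h^{\frac14}\Big),
\end{eqnarray*}
with $\beta_2,\ldots,\beta_5>0$ independent of $\tau,h,\theta$. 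The crucial point is that there is \emph{no} factor $1+\beta_1\tau$ in front of $\norm \tilde b^n\norm_{\Omegain}$ anymore, the dependence on $\tilde b^n$ surviving only inside the coefficient and in the $O(\tau h^{1/4})$ terms.

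Finally I would prove by induction on $n$ that $\norm \tilde b^n\norm_{\Omegain}\le M_n:=e^{-\frac{A^{-2}}{2}n\tau}\norm \tilde b^0\norm_{\Omega_h}+\eta h^{1/4}$, where $\eta>0$ is a large fixed constant (depending on the $\beta_i$ and $\theta_1$) and $\delta$ is small. The case $n=0$ is immediate from $\norm \tilde b^0\norm_{\Omegain}\le\norm \tilde b^0\norm_{\Omega_h}$. For the induction step, suppose the inequality fails for $n+1$; then $\norm \tilde b^{n+1}\norm_{\Omegain}>M_{n+1}\ge\eta h^{1/4}$, and since $M_{n+1}\ge e^{-\frac{A^{-2}}{2}\tau}M_n\ge e^{-\frac{A^{-2}}{2}\tau}\norm \tilde b^n\norm_{\Omegain}$ we also have $\norm \tilde b^n\norm_{\Omegain}/\norm \tilde b^{n+1}\norm_{\Omegain}<e^{\frac{A^{-2}}{2}\tau}\le 1+A^{-2}\tau$. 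The first bound makes each term of the form $(\cdot)/\norm \tilde b^{n+1}\norm_{\Omegain}$ small once $\eta$ is large and $h$ small; the second gives $\beta_0\,\norm \tilde b^n\norm_{\Omegain}/\norm \tilde b^{n+1}\norm_{\Omegain}<\beta_0(1+A^{-2}\tau)\le A^{-1}/2$ for small $\tau$ (recall $\beta_0=A^{-1}/4$). Hence the coefficient multiplying $\sum_j\norm D_j^+\tilde b^{n+1}\norm_{\Omega_h}$ stays $\ge A^{-1}/2$, and \eqref{Poin-poin} (which gives $\sum_{j=1}^3\norm D_j^+\tilde b^{n+1}\norm_{\Omega_h}\ge 3A^{-1}\norm \tilde b^{n+1}\norm_{\Omegain}-O(h^{1/2})$) upgrades the recurrence to $\norm \tilde b^{n+1}\norm_{\Omegain}\le\frac{1}{1+\frac32\tau A^{-2}}\norm \tilde b^n\norm_{\Omegain}+\beta_6\tau h^{1/4}$ for a further constant $\beta_6>0$. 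Inserting $\norm \tilde b^n\norm_{\Omegain}\le M_n$, using $\frac{1}{1+\frac32\tau A^{-2}}\le e^{-\frac{A^{-2}}{2}\tau}$ and $\frac{1}{1+\frac32\tau A^{-2}}\le 1-\tau A^{-2}$ for small $\tau$, and choosing $\eta\ge\beta_6 A^2$, one obtains $\norm \tilde b^{n+1}\norm_{\Omegain}\le M_{n+1}$ — a contradiction. Since $\tilde b^n=-\bar v^n=O(h)$ on $\partial\Omega_h$ we have $\norm \tilde b^n\norm_{\Omega_h}\le\norm \tilde b^n\norm_{\Omegain}+O(h^{3/2})\le M_n+O(h^{3/2})$, and together with $\norm \tilde b^0\norm_{\Omegain}\le\norm \tilde b^0\norm_{\Omega_h}=\norm \tilde u^0-\bar v(0,\cdot)\norm_{\Omega_h}$ this gives the asserted bound with $\beta:=\eta+1$ (with $0<\theta_0\le\theta_1$ fixed).

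The main obstacle is the careful bookkeeping of the boundary remainder terms in the energy estimate — above all producing the refined bound for~(ii) together with its $O(h)$-small boundary corrections, and verifying that after the scaling $\tau=\theta h^{3/4}$ every residual error is either $\tau\,O(h^{1/4})$ or $\tau\,O(h^{1/4})/\norm \tilde b^{n+1}\norm_{\Omegain}$ — and the coordinated choice of $\eta$, $\theta_0$ and $\theta_1$ ensuring that the contraction coefficient $A^{-1}-\beta_0(\cdot)-(\cdot)$ stays bounded below by $A^{-1}/2$.
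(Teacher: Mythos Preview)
Your proposal is correct and follows essentially the same route as the paper: reuse the Section~3 estimates for (i), (iii), $-R^n$ and \eqref{order2}, replace the crude bound on~(ii) by the summation-by-parts identity from the proof of Theorem~\ref{small2} so as to pull out the factor $\tau\beta_0\norm b^n\norm_{\Omega_h}\sum_j\norm D_j^+\tilde b^{n+1}\norm_{\Omega_h}$, and then run a contracting induction on $\norm\tilde b^n\norm_{\Omegain}$. One small slip: $b^n=u^n-\bar v^n$ is \emph{not} discrete divergence-free; rather $\D\cdot b^n=-\D\cdot\bar v^n=O(h^2)$ on $\Omegain$, so the two $\big(\sum_jD_jb_j^n\big)\bar v^{n+1}\cdot\tilde b^{n+1}(\cdots)$ terms arising in the summation-by-parts do not vanish but contribute $O(\tau h^2)\norm\tilde b^{n+1}\norm_{\Omegain}$, which is harmlessly absorbed into your $O(\tau h)$ remainder. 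The paper also tracks the $\theta$-dependence explicitly to produce a specific relation $\theta_0=A\beta_3/(\beta_4\theta_1)$ and $\eta=8A\theta_1\beta_4$, whereas you let the constants depend on a fixed choice of $\theta_0\le\theta_1$ and then pick $\eta\ge\beta_6A^2$; both are valid since the statement only asks for existence of $\theta_0,\theta_1,\beta$.
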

\medskip

\noindent{\bf Remark.}  {\it This theorem states that any solution of the discrete Navier-Stokes equations (including time-periodic one!) falls into the $O(h^{\frac{1}{4}})$-neighborhood of the exact time-periodic solution as time goes to infinity. However, it does not claim that a discrete solution tends to a time-periodic state (we do not assume the existence of a discrete solution satisfying \eqref{small}) and hence we do not know about the contraction stated in Theorem  \ref{small2}. }  
\medskip

\begin{proof}
Set  $\bar{v}^n(\cdot):=\bar{v}(n\tau,\cdot)$, $\tilde{b}^n:=\tilde{u}^n-\bar{v}^n$ and $b^n:=u^n-\bar{v}^n$. Observe that we have for $x\in\Omegain$,
\begin{eqnarray}\label{bbbbb2}
&&\tilde{b}^{n+1}(x)
=b^n(x) \\\nonumber
&&\quad -\underline{ \frac{\tau}{2}\sum_{j=1}^3   
\Big(u^n_j(x-he^j)D_j\tilde{b}^{n+1}(x-he^j)+u^n_j(x+he^j)D_j\tilde{b}^{n+1}(x+he^j)\Big)}_{\rm (i)} \\\nonumber 
&&\quad-\underline{\frac{\tau}{2}\sum_{j=1}^3 \Big(b^n_j(x-he^j)D_j\bar{v}^{n+1}(x-he^j)+b^n_j(x+he^j)D_j\bar{v}^{n+1}(x+he^j) \Big)}_{\rm (ii)}\\\nonumber
&&\quad+\underline{\tau\sum_{j=1}^3D^2_j\tilde{b}^{n+1}(x)}_{\rm (iii)} -R^n(x). 
\end{eqnarray}
Following the estimate given in Section 3, we have \eqref{v}, \eqref{(i)}, \eqref{(iii)}, \eqref{(iiii)} and \eqref{order2} (with $\bar{v}$ instead of $v$) also for \eqref{bbbbb2}. We estimate $-({\rm (ii)},\tilde{b}^{n+1})_{\Omegain}$ by taking $\max |\bar{v}^{n+1}|$ out of the inner product after ``summation by parts''. For this purpose, observe that 
\begin{eqnarray*}
&&-\sum_{x\in\Omega_h\setminus\partial\Omega_h}({\rm ii})\cdot\tilde{b}^{n+1}(x)h^3\\
&&\quad =  \frac{\tau}{2}\sum_{x\in\Omegain} \Big(\sum_{j=1}^3D_jb^n_j(x)\Big)\bar{v}^{n+1}(x)\cdot\tilde{b}^{n+1}(x) h^3 \\
&&\qquad +  \underline{\frac{\tau }{4h}\sum_{j=1}^3\sum_{x\in\Omegain}b^n_j(x)\bar{v}^{n+1}(x-he^j)\cdot\tilde{b}^{n+1}(x+he^j)h^3} \\
&&\qquad\underline{-\frac{\tau }{4h} \sum_{j=1}^3\sum_{x\in\Omegain}b^n_j(x)\bar{v}^{n+1}(x+he^j)\cdot\tilde{b}^{n+1}(x-he^j)h^3}_{\rm (a)} \\
&&\qquad +\frac{\tau }{4h}\sum_{j=1}^3\sum_{x\in\Gamma^{j-}_h}b^n_j(x)\bar{v}^{n+1}(x-he^j)\cdot\tilde{b}^{n+1}(x+he^j)h^3\\
&&\qquad -\frac{\tau }{4h}\sum_{j=1}^3\sum_{x\in\Gamma^{j+}_h}b^n_j(x-he^j)\bar{v}^{n+1}(x-2he^j)\cdot\tilde{b}^{n+1}(x)h^3\\
&&\qquad +\frac{\tau }{4h}\sum_{j=1}^3\sum_{x\in\Gamma^{j-}_h}b^n_j(x+he^j)\bar{v}^{n+1}(x+2he^j)\cdot\tilde{b}^{n+1}(x)h^3\\
&&\qquad -\frac{\tau }{4h}\sum_{j=1}^3\sum_{x\in\Gamma^{j+}_h}b^n_j(x)\bar{v}^{n+1}(x+he^j)\cdot\tilde{b}^{n+1}(x-he^j)h^3.
\end{eqnarray*}
 We have  
 \begin{eqnarray*}
{\rm (a)}  &= &\frac{\tau}{4h}\sum_{j=1}^3\sum_{x\in\Omegain}
\Big( 
b^n_j(x)\bar{v}^{n+1}(x-he^j)\cdot\tilde{b}^{n+1}(x+he^j) \\
&&  -b^n_j(x)\bar{v}^{n+1}(x-he^j)\cdot\tilde{b}^{n+1}(x-he^j)\\
&& +b^n_j(x)\bar{v}^{n+1}(x-he^j)\cdot\tilde{b}^{n+1}(x-he^j)
-b^n_j(x)\bar{v}^{n+1}(x+he^j)\cdot\tilde{b}^{n+1}(x-he^j)
\Big)h^3\\
&=&\frac{\tau}{2}\sum_{j=1}^3\sum_{x\in\Omegain}
b^n_j(x)\bar{v}^{n+1}(x-he^j)\cdot D_j\tilde{b}^{n+1}(x)h^3\\
&&+ \frac{\tau}{4h}\sum_{j=1}^3\sum_{x\in\Omegain}
\Big( 
b^n_j(x)\bar{v}^{n+1}(x-he^j)\cdot\tilde{b}^{n+1}(x-he^j)\\
&&\qquad\qquad\qquad\qquad\qquad\qquad  -b^n_j(x)\bar{v}^{n+1}(x+he^j)\cdot\tilde{b}^{n+1}(x-he^j)
\Big)h^3\\
&=&\frac{\tau}{2}\sum_{j=1}^3\sum_{x\in\Omegain}
b^n_j(x)\bar{v}^{n+1}(x-he^j)\cdot D_j\tilde{b}^{n+1}(x)h^3\\
&&+ \frac{\tau}{4h}\sum_{j=1}^3\sum_{x\in\Omegain}
\Big( 
b^n_j(x+he^j)\bar{v}^{n+1}(x)\cdot\tilde{b}^{n+1}(x)\\
&&\qquad\qquad\qquad\qquad\qquad\qquad  -b^n_j(x-he^j)\bar{v}^{n+1}(x)\cdot\tilde{b}^{n+1}(x-2he^j)\Big)h^3\\
&&+\frac{\tau}{4h}\sum_{j=1}^3\sum_{x\in\Gamma^{j-}_h}
b^n_j(x+he^j)\bar{v}^{n+1}(x)\cdot\tilde{b}^{n+1}(x) h^3       \\
&&-\frac{\tau}{4h}\sum_{j=1}^3\sum_{x\in\Gamma^{j+}_h}   
b^n_j(x)\bar{v}^{n+1}(x-he^j)\cdot\tilde{b}^{n+1}(x-he^j) h^3\\
&&+\frac{\tau}{4h}\sum_{j=1}^3\sum_{x\in\Gamma^{j-}_h} b^n_j(x)\bar{v}^{n+1}(x+he^j)\cdot\tilde{b}^{n+1}(x-he^j) h^3     \\
&&-\frac{\tau}{4h}\sum_{j=1}^3\sum_{x\in\Gamma^{j+}_h}   
b^n_j(x-he^j)\bar{v}^{n+1}(x)\cdot\tilde{b}^{n+1}(x-2he^j) h^3   \\
&=&\frac{\tau}{2}\sum_{j=1}^3\sum_{x\in\Omegain}
b^n_j(x)\bar{v}^{n+1}(x-he^j)\cdot D_j\tilde{b}^{n+1}(x)h^3\\
&&+ \frac{\tau}{2}\sum_{j=1}^3\sum_{x\in\Omegain}
b^n_j(x+he^j)\bar{v}^{n+1}(x)\cdot D_j\tilde{b}^{n+1}(x-he^j)h^3\\
&&+\frac{\tau}{2}\sum_{x\in\Omegain}\Big(\sum_{j=1}^3 D_jb^n_j(x)\Big)\bar{v}^{n+1}(x)\cdot\tilde{b}^{n+1}(x-2he^j)h^3\\
&&+\frac{\tau}{4h}\sum_{j=1}^3\sum_{x\in\Gamma^{j-}_h}
b^n_j(x+he^j)\bar{v}^{n+1}(x)\cdot\tilde{b}^{n+1}(x) h^3       \\
&&-\frac{\tau}{4h}\sum_{j=1}^3\sum_{x\in\Gamma^{j+}_h}   
b^n_j(x)\bar{v}^{n+1}(x-he^j)\cdot\tilde{b}^{n+1}(x-he^j) h^3\\
&&+\frac{\tau}{4h}\sum_{j=1}^3\sum_{x\in\Gamma^{j-}_h} b^n_j(x)\bar{v}^{n+1}(x+he^j)\cdot\tilde{b}^{n+1}(x-he^j) h^3     \\
&&-\frac{\tau}{4h}\sum_{j=1}^3\sum_{x\in\Gamma^{j+}_h}   
b^n_j(x-he^j)\bar{v}^{n+1}(x)\cdot\tilde{b}^{n+1}(x-2he^j) h^3 . 
 \end{eqnarray*}
Since $\sum_jD_jb_j^n(x)=\sum_jD_ju_j^n(x)-\sum_jD_j\bar{v}_j^n(x)=-\sum_jD_j\bar{v}_j^n(x)=O(h^2)$, we have 
\begin{eqnarray*}
&&\!\!\!\!\!\!\! \frac{\tau}{2}\sum_{x\in\Omegain} \!\!\!\Big(\sum_{j=1}^3D_jb^n_j(x)\Big)\bar{v}^{n+1}(x)\cdot\tilde{b}^{n+1}(x) h^3 \le O(\tau h^2)\norm\tilde{b}^{n+1} \norm_{\Omegain},\\
&&\!\!\!\!\!\!\! \frac{\tau}{2}\sum_{x\in\Omegain}\!\!\!\Big(\sum_{j=1}^3 D_jb^n_j(x)\Big)\bar{v}^{n+1}(x)\cdot\tilde{b}^{n+1}(x-2he^j)h^3\le O(\tau h^2)\norm\tilde{b}^{n+1} \norm_{\Omegain}+O(\tau h^5), 
\end{eqnarray*}
where $O(\tau h^5)$ comes from the values outside $\Omega_h$.  The terms with $\Gamma^{j\pm}_h$ are estimated as
\begin{eqnarray*}
&&\frac{\tau }{4h}\sum_{j=1}^3\sum_{x\in\Gamma^{j-}_h}b^n_j(x)\bar{v}^{n+1}(x-he^j)\cdot\tilde{b}^{n+1}(x+he^j)h^3\\
&&\qquad -\frac{\tau }{4h}\sum_{j=1}^3\sum_{x\in\Gamma^{j+}_h}b^n_j(x-he^j)\bar{v}^{n+1}(x-2he^j)\cdot\tilde{b}^{n+1}(x)h^3\\
&&=\frac{\tau }{4h}\sum_{j=1}^3\sum_{x\in\Gamma^{j-}_h}\big(0-\bar{v}^n_j(x)\big)\bar{v}^{n+1}(x-he^j)\cdot\big(\tilde{u}^{n+1}(x+he^j)-\bar{v}^{n+1}(x+he^j)\big)h^3\\
&&\qquad -\frac{\tau }{4h}\sum_{j=1}^3\sum_{x\in\Gamma^{j+}_h}\big(u^n_j(x-he^j)-\bar{v}^n_j(x-he^j))\bar{v}^{n+1}(x-2he^j)\cdot\big(0-\bar{v}^{n+1}(x)\big)h^3\\
&&\le \frac{\tau}{4h}O(h)O(h)\sum_{j=1}^3\sum_{x\in\Omega_h}|\tilde{u}^{n+1}(x+he^j)\chi_{\partial\Omega_h}(x)|h^3+\frac{\tau}{4h}O(h)O(h)O(h)\sum_{j=1}^3(\sharp\Gamma^{j-}_h)h^3\\
&&\qquad + \frac{\tau}{4h}\sum_{j=1}^3\sum_{x\in\Omega_h}|u^{n}(x-he^j)\chi_{\partial\Omega_h}(x)|h^3O(h)O(h)+\frac{\tau}{4h}O(h)O(h)O(h)\sum_{j=1}^3(\sharp\Gamma^{j-}_h)h^3\\
&&=O(\tau h)h^\2+O(\tau h^3)=O(\tau h^{\frac{3}{2}}).
\end{eqnarray*}
Therefore,  with \eqref{small3} and 
$\norm b^n\norm_{\Omega_h}\le \norm b^n\norm_{\Omegain}+O(h^\frac{3}{2})\le  \norm \tilde{b}^n\norm_{\Omegain}+O(h)$, we obtain
\begin{eqnarray*}
&&-\sum_{x\in\Omegain}({\rm ii})\cdot \tilde{b}^{n+1}(x)h^3
\le \tau\beta_0\norm b^n\norm_{\Omega_h}\sum_{j=1}^3\norm D^+_j\tilde{b}^{n+1}\norm_{\Omega_h}\\\nonumber
&&\qquad +O(\tau h^2)\norm\tilde{b}^{n+1} \norm_{\Omegain}+O(\tau h^{\frac{3}{2}})\\
&&\quad \le  \tau\beta_0\norm \tilde{b}^n\norm_{\Omegain}\sum_{j=1}^3\norm D^+_j\tilde{b}^{n+1}\norm_{\Omega_h}
+O(\tau h) \sum_{j=1}^3\norm D^+_j\tilde{b}^{n+1}\norm_{\Omega_h}\\\nonumber
&&\qquad +O(\tau h^2)\norm\tilde{b}^{n+1} \norm_{\Omegain}+O(\tau h^{\frac{3}{2}}).
\end{eqnarray*}
This estimate together with \eqref{(i)}, \eqref{(iii)},  \eqref{(iiii)} ($\bar{v}$ instead of $v$) and the scaling condition of $\delta=(\tau,h)$ lead to 
\begin{eqnarray}\label{iv}
&&\norm \tilde{b}^{n+1}\norm_{\Omegain}\le  (1+\beta_1\tau h^\2)
\norm \tilde{b}^n\norm_{\Omegain}+\tau\Big( \frac{\beta_2\theta_1h^\frac{3}{4}}{\norm\tilde{b}^{n+1}\norm_{\Omegain}}+\beta_3\theta_0^{-1} h^{\frac{1}{4}}   \Big)\\
\nonumber
&&\quad-\tau\Big(\sum_{j=1}^3\norm D^+_j\tilde{b}^{n+1}\norm_{\Omega_h}\Big)\Big(
A^{-1}-\beta_0\frac{\norm \tilde{b}^n\norm_{\Omegain}}{\norm \tilde{b}^{n+1}\norm_{\Omegain}}-\frac{\beta_4\theta_1 h^{\frac{1}{4}}}{\norm \tilde{b}^{n+1}\norm_{\Omegain}}
\Big),
\end{eqnarray}
where $\beta_1,\beta_2,\cdots$ are some positive constants independent of $\delta=(\tau,h)$, $n$, $\theta$, $\theta_0$ and $\theta_1$. 
\begin{Lemma}\label{last4}
Suppose that 
$$\theta_1>\sqrt{\frac{A\beta_3}{\beta_4}},\quad \theta_0=\frac{A\beta_3}{\beta_4\theta_1}$$
 in \eqref{scale444} (the first inequality guarantees that $\theta_0<\theta_1$). Then,  we have for each sufficiently small $\delta$,
\begin{eqnarray}\label{error44}
\norm \tilde{b}^n\norm_{\Omegain}\le e^{-\frac{A^{-2}}{2}n\tau}\norm \tilde{b}^0\norm_{\Omegain}+8A \theta_1\beta_4h^{\frac{1}{4}}
\mbox{\quad for all $n\in\N\cup\{0\}$.}
\end{eqnarray}
\end{Lemma}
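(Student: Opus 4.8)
The plan is to establish \eqref{error44} by induction on $n$, mirroring the induction used in the proof of Theorem \ref{error-estimate} for \eqref{hodai} but now carrying the exponential-decay factor instead of a growing exponential. Write $M_n := \norm \tilde{b}^n\norm_{\Omegain}$ and let $E := 8A\theta_1\beta_4 h^{\frac14}$ be the target ``floor''. The claim is $M_n \le e^{-\frac{A^{-2}}{2}n\tau}M_0 + E$ for all $0\le n$. The base case $n=0$ is trivial since $M_0 \le M_0 + E$. For the inductive step, assume $M_n \le e^{-\frac{A^{-2}}{2}n\tau}M_0 + E$ and suppose for contradiction that $M_{n+1} > e^{-\frac{A^{-2}}{2}(n+1)\tau}M_0 + E$; in particular $M_{n+1} > E$, which is the quantitative lower bound I need to control the ``bad'' ratios $\norm\tilde{b}^n\norm/\norm\tilde{b}^{n+1}\norm$ and the $h^{1/4}/\norm\tilde{b}^{n+1}\norm$ terms in \eqref{iv}.

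First I would bound the coefficient of $-\tau(\sum_j\norm D_j^+\tilde b^{n+1}\norm)$ in \eqref{iv} from below. Using $M_{n+1} > E = 8A\theta_1\beta_4 h^{1/4}$ we get $\beta_4\theta_1 h^{1/4}/M_{n+1} < \tfrac18 A^{-1}$. For the $\beta_0$-term: from the inductive hypothesis and the (to-be-shown) fact that $M_{n+1}$ cannot be much smaller than $M_n$ for small $\delta$, one argues $\beta_0 M_n/M_{n+1} \le \beta_0(1+o(1)) \le \tfrac14 A^{-1} + o(1)$ since $\beta_0 = A^{-1}/4$; here the key point, exactly as in Theorem \ref{small2}, is that the standing hypothesis ``$M_{n+1} > \frac{1}{1+\text{(something)}\tau}M_n$-type bound fails'' is what one derives, so that in the contradiction branch $M_n/M_{n+1}$ is essentially $\le 1 + O(\tau)$. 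Combining, the coefficient of the dissipative term is $\ge A^{-1} - \tfrac14 A^{-1} - \tfrac18 A^{-1} - o(1) > \tfrac12 A^{-1}$ for small $\delta$, so by the Poincaré type inequality I ($\sum_j\norm D_j^+\tilde b^{n+1}\norm \ge A^{-1}M_{n+1}$) that whole term is $\le -\tfrac12\tau A^{-2} M_{n+1}$, i.e. $\le -\tfrac{A^{-2}}{2}\tau M_{n+1}$.

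Next I would handle the additive error terms $\tau(\beta_2\theta_1 h^{3/4}/M_{n+1} + \beta_3\theta_0^{-1}h^{1/4})$. Using $M_{n+1} > E$ gives $\beta_2\theta_1 h^{3/4}/M_{n+1} < \beta_2\theta_1 h^{3/4}/(8A\theta_1\beta_4 h^{1/4}) = O(h^{1/2}) = o(h^{1/4})$, so this is absorbed. The remaining genuine error is $\tau\beta_3\theta_0^{-1}h^{1/4} = \tau\beta_3\frac{\beta_4\theta_1}{A\beta_3}h^{1/4} = \tau\frac{\beta_4\theta_1}{A}h^{1/4}$ by the choice $\theta_0 = A\beta_3/(\beta_4\theta_1)$. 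So \eqref{iv} yields, in the contradiction branch and for small $\delta$,
\begin{eqnarray*}
M_{n+1} &\le& (1+\beta_1\tau h^{1/2})M_n + \tau\tfrac{\beta_4\theta_1}{A}h^{1/4} + o(\tau h^{1/4}) - \tfrac{A^{-2}}{2}\tau M_{n+1}.
\end{eqnarray*}
Hence $(1+\tfrac{A^{-2}}{2}\tau)M_{n+1} \le (1+\beta_1\tau h^{1/2})M_n + 2\tau\tfrac{\beta_4\theta_1}{A}h^{1/4}$ for small $\delta$. Now I would plug in the inductive hypothesis $M_n \le e^{-\frac{A^{-2}}{2}n\tau}M_0 + E$ with $E = 8A\theta_1\beta_4 h^{1/4}$, and check that $(1+\tfrac{A^{-2}}{2}\tau)(e^{-\frac{A^{-2}}{2}(n+1)\tau}M_0 + E) \ge (1+\beta_1\tau h^{1/2})(e^{-\frac{A^{-2}}{2}n\tau}M_0 + E) + 2\tau\tfrac{\beta_4\theta_1}{A}h^{1/4}$: the $M_0$-parts balance because $(1+\tfrac{A^{-2}}{2}\tau)e^{-\frac{A^{-2}}{2}(n+1)\tau} \ge e^{-\frac{A^{-2}}{2}n\tau} \ge (1-o(1))(1+\beta_1\tau h^{1/2})e^{-\frac{A^{-2}}{2}n\tau}$ for small $\tau$ (using $1+x\le e^x$), and the constant parts balance because $\tfrac{A^{-2}}{2}\tau E = 4\tau\beta_4\theta_1 A^{-1}h^{1/4} > 2\tau\tfrac{\beta_4\theta_1}{A}h^{1/4} + \beta_1\tau h^{1/2}E$ for small $\delta$. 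This forces $M_{n+1} \le e^{-\frac{A^{-2}}{2}(n+1)\tau}M_0 + E$, contradicting the assumption, which completes the induction and hence proves \eqref{error44} with the constant $8A\theta_1\beta_4$.

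The main obstacle I anticipate is the circular-looking control of $M_n/M_{n+1}$ in the contradiction branch: one must argue, as in Theorem \ref{small2} and in the proof of \eqref{hodai}, that either $M_{n+1}$ is already small enough to satisfy the conclusion outright, or else the ratio $M_n/M_{n+1}$ is $\le 1+O(\tau)$, so that $\beta_0 M_n/M_{n+1}$ stays below $\tfrac14 A^{-1}(1+O(\tau))$ and the dissipative coefficient survives. Getting the bookkeeping of the several $O(\cdot)$ constants consistent with the explicit prefactor $8A\theta_1\beta_4$ (rather than just ``some constant'') requires care, but it is routine once the structure above is in place; everything else is a direct transcription of the estimates already assembled in Sections 3 and 4.2.
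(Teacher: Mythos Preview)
Your overall strategy---induction on $n$, assume \eqref{error44} holds up to $n$ and fails at $n+1$, then feed both hypotheses into \eqref{iv}---is exactly the paper's. Two points need correction.

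First, the ``circular-looking'' control of $M_n/M_{n+1}$ is not circular at all and requires no auxiliary dichotomy. In the contradiction branch you have $M_n \le e^{-\frac{A^{-2}}{2}n\tau}M_0 + E$ and $M_{n+1} > e^{-\frac{A^{-2}}{2}(n+1)\tau}M_0 + E$, so directly
\[
\frac{M_n}{M_{n+1}} \;<\; \frac{e^{-\frac{A^{-2}}{2}n\tau}M_0 + E}{\,e^{-\frac{A^{-2}}{2}(n+1)\tau}M_0 + E\,} \;\le\; e^{\frac{A^{-2}}{2}\tau} \;\le\; \tfrac32
\]
for small $\tau$. This is how the paper obtains the clean coefficient $A^{-1}-\tfrac32\beta_0-\beta_4\theta_1/\eta=\tfrac{A^{-1}}{2}$ in front of the dissipative sum.

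Second, and this is a genuine gap: your application of Poincar\'e drops a factor of~$3$. Lemma~\ref{Poincare} gives $\norm D_j^+\tilde b^{n+1}\norm_{\Omega_h}\ge A^{-1}M_{n+1}$ for \emph{each} $j$, so $\sum_{j=1}^3\norm D_j^+\tilde b^{n+1}\norm_{\Omega_h}\ge 3A^{-1}M_{n+1}$ (up to the $O(h^{1/2})$ correction from \eqref{Poin-poin}, which you also need here since $\tilde b^{n+1}|_{\partial\Omega_h}\neq0$). With only the factor $A^{-1}$ you get $(1+\tfrac{A^{-2}}{2}\tau)M_{n+1}$ on the left, and then your ``$M_0$-parts balance'' claim $(1+\tfrac{A^{-2}}{2}\tau)e^{-\frac{A^{-2}}{2}(n+1)\tau}\ge e^{-\frac{A^{-2}}{2}n\tau}$ is \emph{false}: it is equivalent to $(1+x)e^{-x}\ge1$, and the very inequality $1+x\le e^x$ you cite shows the opposite. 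With the correct factor $3$ you instead get $(1+\tfrac{3A^{-2}}{2}\tau)M_{n+1}$ on the left; dividing and absorbing the $O(\tau h^{1/2})$ terms gives $M_{n+1}\le \frac{1}{1+\tau A^{-2}}M_n+2\tau\beta_3\theta_0^{-1}h^{1/4}\le e^{-\frac{A^{-2}}{2}\tau}M_n+2\tau\beta_3\theta_0^{-1}h^{1/4}$, and then the constants close exactly because $2\beta_3\theta_0^{-1}=\tfrac{A^{-2}}{4}\eta$ by the choice of $\theta_0$ and $\eta=8A\theta_1\beta_4$.
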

\begin{proof}
Set $\eta:=8A \theta_1\beta_4$. For $n=0$, \eqref{error44} holds. Suppose that \eqref{error44} holds up to some $n$ and fails to hold for $n+1$. Then, \eqref{iv} implies 
\begin{eqnarray*}
\norm \tilde{b}^{n+1}\norm_{\Omegain}&\le&  (1+\beta_1\tau h^\2)
\norm \tilde{b}^n\norm_{\Omegain}+\tau \frac{\beta_2\theta_1h^\frac{1}{2}}{\eta}+\tau\beta_3\theta_0^{-1} h^{\frac{1}{4}}\\
&&-\tau\Big(\sum_{j=1}^3\norm D^+_j\tilde{b}^{n+1}\norm_{\Omega_h}\Big)\Big( 
A^{-1}-\frac{3}{2}\beta_0
-\frac{\beta_4\theta_1}{\eta}
\Big),
\end{eqnarray*}
where we have    
$$A^{-1}-\frac{3}{2}\beta_0
-\frac{\beta_4\theta_1}{\eta}\ge \frac{A^{-1}}{2}.$$
Hence, we may apply the Poincar\'e type inequality I with the correction \eqref{Poin-poin} to obtain 
\begin{eqnarray*}
-\tau\Big(\sum_{j=1}^3\norm D^+_j\tilde{b}^{n+1}\norm_{\Omega_h}\Big)\Big( 
A^{-1}-\frac{3}{2}\beta_0
-\frac{\beta_4\theta_1}{\eta}
\Big)\le -\tau \frac{3A^{-2}}{2}\norm \tilde{b}^{n+1}\norm_{\Omegain}+\beta_5\tau h^\2.
\end{eqnarray*}
Therefore, we have for sufficiently small $\delta$,
\begin{eqnarray*}
\norm \tilde{b}^{n+1}\norm_{\Omegain}&\le&  \frac{1+\tau\beta_1h^\2}{1+\tau\frac{3A^{-2}}{2}}
\norm \tilde{b}^n\norm_{\Omegain}+\tau\frac{\beta_2\theta_1h^\2}{\eta} +\tau\beta_2\theta_0^{-1}h^\frac{1}{4}+\beta_5\tau h^\2 \\
&\le& \frac{1}{1+\tau A^{-2}}
\norm \tilde{b}^n\norm_{\Omegain}+2\tau\beta_3\theta_0^{-1} h^{\frac{1}{4}}\\
&\le&e^{-\frac{A^{-2}}{2}\tau}\norm \tilde{b}^n\norm_{\Omegain}+2\tau\beta_3\theta_0^{-1} h^{\frac{1}{4}}\\
&\le& e^{-\frac{A^{-2}}{2}(n+1)\tau}\norm\tilde{b}^0\norm_{\Omegain}+ e^{-\frac{A^{-2}}{2}\tau }\eta h^\frac{1}{4}+ 2\tau\beta_3\theta_0^{-1} h^{\frac{1}{4}} \\
&\le& e^{-\frac{A^{-2}}{2}(n+1)\tau}\norm\tilde{b}^0\norm_{\Omegain}+ \eta h^{\frac{1}{4}}+\Big(2\beta_3\theta_0^{-1}-\frac{A^{-2}}{4}\eta\Big)\tau h^{\frac{1}{4}}\\
&=&e^{-\frac{A^{-2}}{2}(n+1)\tau}\norm\tilde{b}^0\norm_{\Omegain}+ \eta h^{\frac{1}{4}},
\end{eqnarray*}
which is a contradiction.
\end{proof}
Since $\tilde{b}^n=-\bar{v}^n=O(h)$ on $\partial\Omega_h$ and $\norm \tilde{b}^n\norm_{\Omega_h}\le \norm \tilde{b}^n\norm_{\Omegain}+O(h^\frac{3}{2})$, Theorem \ref{periodic-error} is an immediate consequence of Lemma  \ref{last4}. 
\end{proof}
\setcounter{section}{4}
\setcounter{equation}{0}
\section{Problem with periodic boundary conditions}
We briefly summarize results on the problems in $\Omega=\T^3$, i.e., the problems with the periodic boundary conditions. By taking $h=1/N$ with $N\in\N$, one can formulate the discrete Navier-Stokes equations with the periodic boundary conditions in the same way as Section 2. 

Since there is no boundary of $\Omega=\T^3$, the Poincar\'e type inequality II is obtained in a simpler way (see \cite{Chorin}) and we are not bothered by the remaining terms from the boundary in the arguments corresponding to  Section 3. Hence, we may optimize our error estimates by the central difference and the diffusive scaling $\tau=O(h^2)$. 
In fact,  Lemma \ref{div-smooth} is improved to be 
$$\norm u-P_hu\norm_{\Omega_h}\le O( h^4)=O(\tau h^2), \quad u\in C^5_{\sigma}(\T^3).$$      
Furthermore, Theorem \ref{error-estimate} is improved to be 
$$\norm \tilde{u}^{n}-v(\tau n,\cdot)\norm_{\Omega_h}\le O(h^2),$$
provided  an exact solution $v$ belongs to the $C^5$-class. Then, using the inequality 
$$\max_{\Omega_h}|u(x)|h^{\frac{3}{2}} \le \norm u\norm_{\Omega_h},$$
we obtain the $L^\infty$-error estimate 
$$\max_{\Omega_h}| \tilde{u}^{n}(x)-v(\tau n,x)|\le O(\sqrt{h}).$$   
\indent The results in Section 4 are also improved  with the diffusive scaling and with a $C^5$-exact solution, where we need to argue with initial data with a common average over $\T^3$ (the average of a solution is conserved both for the Navier-Stokes equations and discrete Navier-Stokes equations). In particular, Theorem \ref{periodic-error} becomes 
\begin{eqnarray*}
\norm \tilde{u}^n-\bar{v}(n\tau,\cdot)\norm_{\Omega_h}\le e^{-\frac{A^{-2}}{2}n\tau}\norm \tilde{u}^0-\bar{v}(0,\cdot)\norm_{\Omega_h}+\beta h^{2}
\mbox{\quad for all $n\in\N\cup\{0\}$.}
\end{eqnarray*}
Then, we have an $L^\infty$-estimate of $\tilde{b}^n$ to be $O(\sqrt{h})$ for all sufficiently large $n$. This implies that there exists a solution of the discrete Navier-Stokes equations that satisfies \eqref{small}, provided there exists an exact smooth time-periodic solution $\bar{v}$ that satisfies (\ref{small3}).   Hence, we obtain 
\begin{Thm}
Suppose that there exists an exact time-periodic solution $\bar{v}\in C^5([0,\infty)\times\T^3)$ satisfying \eqref{small3}. Then, a time-periodic discrete solution $\bar{u}^n$,  $\tilde{\bar{u}}^n$ with the same average as $\bar{v}$ is unique and asymptotically stable within initial data with the same average. The $L^\infty$-error between $\bar{v}(\tau n,\cdot)$ and $\tilde{\bar{u}}^n$  is  $O(\sqrt{h})$ for all $n$.  
\end{Thm}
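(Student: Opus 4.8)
The plan is to read off the $L^\infty$ statement from the periodic analogue of Theorem~\ref{periodic-error} (recorded in the display just above this theorem) by exploiting time-periodicity, and then to feed the resulting $L^\infty$-smallness into the periodic analogues of Theorem~\ref{small2} and Corollary~\ref{nasu}. Throughout, one works inside the affine subspace of discrete fields on $\Omega_h$ whose average equals that of $\bar v$: this average is conserved by the discrete Navier--Stokes equations in the periodic case, and the discrete Poincar\'e type inequality I is available for differences of such fields since those differences are mean-zero (this is precisely why, as noted at the start of this section, one must argue with initial data having a common average).

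First I would invoke the periodic analogue of Theorem~\ref{fixed}: Brouwer's fixed point theorem applied to the time-$1$ map $\Phi_\delta$ on the bounded closed convex set $\tilde{U}_R$ intersected with that affine subspace — nonempty and $\Phi_\delta$-invariant for $R$ large, by the periodic analogue of Theorem~\ref{time-global} — produces a discrete time-periodic solution $\bar u^n,\tilde{\bar u}^n$ with the prescribed average. Next I would apply the periodic analogue of Theorem~\ref{periodic-error} to the choice $\tilde u^n:=\tilde{\bar u}^n$ (legitimate because $\tilde{\bar u}^0$ has the same average as $\bar v$). Both $\tilde{\bar u}^n$ and $\bar v(n\tau,\cdot)$ are $T_1$-periodic in $n$ — the former by construction, the latter because $\bar v((n+T_1)\tau,\cdot)=\bar v(n\tau+1,\cdot)=\bar v(n\tau,\cdot)$ — so $n\mapsto\norm\tilde{\bar u}^n-\bar v(n\tau,\cdot)\norm_{\Omega_h}$ is $T_1$-periodic. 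Evaluating the estimate at $n=mT_1+k$ and letting $m\to\infty$ annihilates the exponentially decaying transient term (the prefactor $\norm\tilde{\bar u}^0-\bar v(0,\cdot)\norm_{\Omega_h}$ being finite for fixed $\delta$), leaving
\[
\norm\tilde{\bar u}^k-\bar v(k\tau,\cdot)\norm_{\Omega_h}\le\beta h^2\qquad(0\le k<T_1),
\]
hence $\norm\tilde{\bar u}^n-\bar v(n\tau,\cdot)\norm_{\Omega_h}\le\beta h^2$ for all $n\in\N\cup\{0\}$ by periodicity. The inverse inequality $\max_{\Omega_h}|u(x)|\,h^{\frac{3}{2}}\le\norm u\norm_{\Omega_h}$ then yields $\max_{\Omega_h}|\tilde{\bar u}^n(x)-\bar v(n\tau,x)|\le\beta h^{1/2}=O(\sqrt h)$ for every $n$, which is the asserted $L^\infty$-error bound.

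Combined with \eqref{small3}, this gives $\max_{\Omega_h}|\tilde{\bar u}^n(x)|\le A^{-1}/4+\beta h^{1/2}=:\beta_0'$, and for $h$ and $\tau=\theta h^2$ small one has $A^{-1}-\beta_0'(1+\tau A^{-2})\ge A^{-1}/2$. Thus the periodic analogue of the argument of Theorem~\ref{small2} goes through with $\tilde{\bar u}^n$ playing the role of the ``small'' solution and $\beta_0'$ in place of $\beta_0$, yielding the contraction $\norm\tilde w^n-\tilde{\bar u}^n\norm_{\Omega_h}\le e^{-\frac{A^{-2}}{2}n\tau}\norm\tilde w^0-\tilde{\bar u}^0\norm_{\Omega_h}$ for every discrete solution $\tilde w^n$ of the prescribed average. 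As in Corollary~\ref{nasu}, this forces uniqueness of the discrete time-periodic solution with that average — two such solutions would have to converge to each other and hence coincide — and gives asymptotic stability within initial data of that average.

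I expect the main obstacle to be the passage from the periodic error estimate to the uniform-in-$n$ bound $\norm\tilde{\bar u}^n-\bar v(n\tau,\cdot)\norm_{\Omega_h}\le\beta h^2$: one must ensure that $\bar v(n\tau,\cdot)$ is \emph{exactly} $T_1$-periodic in $n$ (valid since $T_1\tau=1$ and $\bar v$ has exact period $1$) and that the periodic analogues of the machinery of Sections~3--4 are genuinely available on the affine subspace of prescribed average, where the discrete Poincar\'e inequality I is valid only for mean-zero fields. A minor care point is that the perturbed smallness constant $\beta_0'$ must still leave room in the contraction estimate; it does, because $\beta_0'\to A^{-1}/4$ as $h,\tau\to0$. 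Everything else is routine bookkeeping once the periodic analogues already asserted at the beginning of this section are in hand.
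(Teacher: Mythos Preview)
Your proposal is correct and follows essentially the same route as the paper. The only difference is the order of the steps: the paper first applies the periodic analogue of Theorem~\ref{periodic-error} to an \emph{arbitrary} discrete solution, deduces that $\tilde b^n=O(\sqrt{h})$ in $L^\infty$ for all sufficiently large $n$, and hence obtains a discrete solution satisfying \eqref{small} (by restarting at a late multiple of $T_1$), after which Corollary~\ref{nasu} does the rest; you instead apply the error estimate directly to the periodic discrete solution and exploit its $T_1$-periodicity to upgrade ``for large $n$'' to ``for all $n$'' before running the contraction argument with the perturbed constant $\beta_0'$. Both orderings are valid, and your periodicity trick is arguably the more direct way to reach the claim that the $L^\infty$-error is $O(\sqrt{h})$ for \emph{all} $n$.
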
 
\noindent Therefore, one can approximate a time-periodic discrete solution and exact one only by solving an initial value problem of the discrete Navier-Stokes equations for a long time.  
\medskip\medskip\medskip\medskip\medskip

\noindent{\bf Acknowledgement.} The second author, Kohei Soga,   is supported by JSPS Grant-in-aid for Young Scientists \#18K13443.

\medskip\medskip

\appendix
\def\thesection{Appendix}
\section{}

\begin{proof}[{\rm  1.}  Proof of Lemma \ref{Poincare2}]
It is enough to prove that we have a constant  $\tilde{A}>0$ depending only on $\Omega$ for which 
$$\sum_{x\in\Omega_h^{\circ j}} |\phi(x)-[\phi]^j|^2
\le \tilde{A}^2 \sum_{x\in\Omega_h\setminus\partial\Omega_h}  |\D \phi(x)|^2$$
hold for each $j$.

We first  find such a constant $\tilde{A}=\tilde{A}_h$ with fixed $h$:  Suppose that there is no such constant $A_h$. Then, for each $k\in\N$, we have $\phi_k:\Omega_h\to\R$ such that 
\begin{eqnarray*}
\sum_{x\in\Omega_h^{\circ j}} |\phi_k(x)- [\phi_k]^j|^2 \ge 
k\sum_{x\in\Omega_h\setminus\partial\Omega_h} |\D \phi_k(x)|^2.
\end{eqnarray*}
We normalize $\phi_k$ as 
$$\psi_k(x):=\frac{\phi_k(x)- [\phi_k]^j}{\dis \Big(\sum_{x\in\Omega_h^{\circ j}} |\phi_k(x)- [\phi_k]^j|^2\Big)^\2}  .$$
Then, we see that 
\begin{eqnarray*}
\sum_{x\in\Omega_h^{\circ j}}\psi_k(x)=0,\quad
 \sum_{x\in\Omega_h^{\circ j}}|\psi_k(x)|^2=1,\quad 
 \sum_{x\in\Omega_h\setminus\partial\Omega_h}|\D\psi_k(x)|^2\le k^{-1} 
 \quad\mbox{for all $k$},
\end{eqnarray*}
which implies that $\psi_k$ is bounded on $\Omega_h^{\circ j}$. Furthermore, since $x+he^i$ belongs to $\Omegain$ for any $x\in\Omega_h^{\circ j}$, we have for $i=1,2,3$,
$$k^{-1}\ge\sum_{\Omegain}| \D\psi_k(x)|^2\ge\sum_{x\in\Omega_h^{\circ j}} |D_i\psi_k(x+he^i)|^2\ge\Big|\frac{\psi_k(x+2he^i)-\psi(x)}{2h}\Big| \mbox{ for all $x\in\Omega_h^{\circ j}$},$$ 
which implies that $\psi_k$ is bounded on  
$$B_h:=\Omega_h^{\circ j}\cup \{ x+2he^i\,|\,x\in\Omega_h^{\circ j},\,\,\,\,\,i=1,2,3 \}.$$
Hence, since $h$ is fixed, we have a subsequence of $\{\psi_k\}$ whose restriction on $B_h$ converges to some $w:B_h\to\R$. We have 
$$\sum_{x\in\Omega_h^{\circ j}}w(x)=0,\quad \sum_{x\in\Omega_h^{\circ j}}|w(x)|^2=1,\quad \sum_{i=1}^3\sum_{x\in\Omega_h^{\circ j}}\Big|\frac{w(x+2he^i)-w(x)}{2h}\Big|^2=0.$$  
Since $\Omega_h^{\circ j}$ is connected, this is a contradiction. 

We next prove that there exists $\tilde{A}>0$ such that $\tilde{A}_h\le \tilde{A}$ for $h\to0+$. Suppose that there is no such $\tilde{A}$. 
Then, for each $k\in\N$, we have $h_k>0$ and $\phi_k:\Omega_{h_k}\to\R$ such that $h_k\to 0$ as $k\to\infty$ and 
$$\sum_{x\in\Omega_{h_k}^{\circ j}} |\phi_k(x)- [\phi_k]^j|^2 \ge k\sum_{x\in\Omega_{h_k}\setminus\partial\Omega_{h_k}} |\D \phi_k(x)|^2$$
We normalize $\phi_k$ as 
$$\psi_k(x):=\frac{\phi_k(x)- [\phi_k]^j}{\dis\Big(\sum_{x\in\Omega_{h_k}^{\circ j}} |\phi_k(x)- [\phi_k]^j|^2(2h_k)^3 \Big)^\2 } .$$
Then, we see that 
\begin{eqnarray*}
&&\sum_{x\in\Omega_{h_k}^{\circ j}}\psi_k(x)(2h_k)^3=0,\quad
\sum_{x\in\Omega_{h_k}^{\circ j}}|\psi_k(x)|^2(2h_k)^3=1,\\ 
&&\sum_{x\in\Omega_{h_k}\setminus\partial\Omega_{h_k}}|\D\psi_k(x)|^2(2h_k)^3\le k^{-1}(2h_k)^3 
\quad\mbox{for all $k$}.
\end{eqnarray*}
Set 
\begin{eqnarray*}
&&\Theta_{h_k}:=\bigcup_{x\in \Omega_{h_k}^{\circ j}} [x_1,x_1+2h_k)\times[x_2,x_2+2h_k)\times[x_3,x_3+2h_k).
\end{eqnarray*}
Let $\hat{w}_k:\Theta_{h_k}\to\R^3$ be the step function defined as  
$$
\hat{w}_k(x):=\psi_{k}(y)\mbox{ for $x\in[y_1,y_1+2h_k)\times[y_2,y_2+2h_k)\times[y_3,y_3+2h_k)$, $y\in\Omega_{h_k}^{\circ j}$}.
$$
Let $w_k:\Theta_{h_k}\to\R$ be the Lipschitz interpolation of the step function derived from $\psi^k|_{\Omega_{h_k}^{\circ j}}$ as Lemma \ref{Lip-interpolation2}.  
Then, we have 
\begin{eqnarray}\nonumber
&&\norm w_k-\hat{w}_k\norm_{L^2(\Theta_{h_k})}=O(h_k)k^{-1}(2h_k)^3,\quad \\\nonumber
&&\Big|\int_{\Theta_{h_k}}w_k(x)dx\Big|=\Big|\int_{\Theta_{h_k}}w_k(x)-\hat{w}_kdx\Big|\le \tilde{K}\norm w_k-\hat{w}_k\norm_{L^2(\Theta_{h_k})},\\\label{232323}
&&\norm \partial_{x_i}w_k\norm_{L^2(\Theta_{h_k})}\le Kk^{-1}(2h_k)^3
\quad(i=1,2,3)\quad\mbox{ for all $k$},
\end{eqnarray}
where $K,\tilde{K}>0$ are some constant, which leads to
\begin{eqnarray}\label{2323231}
&&\norm w_k\norm_{L^2(\Theta_{h_k})}= 1+O(h_k)h_kk^{-1}(2h_k)^3,\quad \\\label{2323232}
&&\int_{\Theta_{h_k}}w_k(x)dx=  O(h_k)k^{-1}(2h_k)^3.
\end{eqnarray}
 We extend $w_k$ to be a function $\bar{w}_k$ of $H^1(\Omega)$ with the estimates 
\begin{eqnarray}\label{23232323}
\norm \bar{w}_k \norm_{H^1(\Omega)}\le L\norm w_k\norm_{H^1(\Theta_{h_k})},\quad \norm \partial_x\bar{w}_k \norm_{L^2(\Omega)^3}\le L\norm \partial_x w_k\norm_{L^2(\Theta_{h_k})^3},
\end{eqnarray}
where  $L>0$ is a constant independent from  $k$. This is possible because $\Omega$ is bounded and Lipschitz: Let $\Gamma_1,\ldots, \Gamma_M$ be a family of open balls covering $\partial\Omega$ such that each $\partial\Omega\cap \Gamma_m$ is described as the graph of  a Lipschitz map $y_3=\varphi_m(y_1,y_2)$, where $(y_1,y_2,y_3)$ is a Cartesian coordinate pointing to $\tilde{e}^1, \tilde{e}^2,\tilde{e}^3$ in the original space spanned by $\{e^1,e^2,e^3\}$ in such a way that  $|\tilde{e}^3\cdot e^i|$ is uniformly away from $1$ for $i=1,2,3$, and   all $\varphi_m$ ($m=1,\ldots,M$) has a common Lipschitz constant (the coordinate $(y_1,y_2,y_3)$ depends on $m$); For all $k\in\N$ large enough,  $\Gamma_1,\ldots, \Gamma_M$ cover also  $\partial\Theta_{h_k}$, where  $\partial\Theta_{h_k}$ consists of $2h_k$-squares orthogonal to $e^1$, $e^2$  or $e^3$; 
Each $\partial\Theta_{h_k}\cap \Gamma_m$ is arbitrarily close to $\partial\Omega\cap \Gamma_m$ as $k\to\infty$; 
We see that $\partial\Theta_{h_k}\cap \Gamma_m$ is described as the graph of a Lipschitz map $y_3=\tilde{\varphi}_m(y_1,y_2;k)$; 
We see also  that $\tilde{\varphi}_m(\cdot;k)$ has a common Lipschitz constant for all $m$ and $k$; Then, we may apply the standard extension argument for $H^1$-functions to obtain \eqref{23232323}. 
Since $\bar{w}_k$,  $\partial_{x_i}\bar{w}_k$ are bounded in $L^2(\Omega)$, we have subsequences, still denoted by the same symbol, which weakly converge to some $w\in L^2(\Omega)$, $v_i\in L^2(\Omega)^3$, respectively. For each $f\in C_0^\infty(\Omega)$, we have 
\begin{eqnarray*}
\int_{\Omega}w(x)\partial_{x_i} f(x) dx &=& \int_{\Omega}(w(x)-\bar{w}_k(x))\partial_{x_i} f(x) dx-\int_{\Omega}\partial_{x_i}\bar{w}_k(x) f(x) dx \\
&&\to-\int_{\Omega}v_i(x) f(x) dx \mbox{\quad as $k\to\infty$,} 
\end{eqnarray*}
which implies that  $w\in H^1(\Omega)$ with $\partial_{x_i} w=v_i$.  
On the other hand, the Rellich-Kondrachov theorem yields a subsequence of $\bar{w}_k$, still denoted by the same symbol, such that $\bar{w}_k$ strongly converges to  $w$ in $L^2(\Omega)$ as $k\to\infty$.
 For each $f\in C_0^\infty(\Omega)$, we have with \eqref{232323},
\begin{eqnarray*}
\int_\Omega \partial_{x_i}w(x)f(x)dx&=&\int_\Omega \partial_{x_i}\bar{w}_k(x)f(x)dx +\int_\Omega (\partial_{x_i}w(x)-\partial_{x_i}\bar{w}_k(x))f(x) dx\\
&=& \int_\Omega \partial_{x_i}\bar{w}_k(x)f(x)dx -\int_\Omega (w(x)-\bar{w}_k(x))\partial_{x_i}f(x) dx\\
&&\to0\mbox{\quad as $k\to\infty$.}
\end{eqnarray*}
Hence, we obtain $\partial_x w=0 $ a.e. in $\Omega$, which implies that $w$ is constant in $\Omega$. 
Since $\norm \bar{w}_k\norm_{L^2(\Omega)}\ge \norm w_k\norm_{L^2(\Theta_{k})}=1+O(h_k)k^{-1}h_k^3$ due to \eqref{2323231}, we see that $\norm w\norm_{L^2(\Omega)}\ge1$ and $w\equiv a \neq 0$ in $\Omega$. 
This is a contradiction, since \eqref{2323232} implies 
\begin{eqnarray*}
\int_\Omega w(x)dx
&=&\int_{\Theta_{h_k}} w(x)dx+a{\rm meas}[\Omega\setminus\Theta_{h_k}]\\
&=&\int_{\Theta_{h_k}} w_k(x)dx+\int_{\Theta_{h_k}} (w(x)-w_k(x))dx +a{\rm meas}[\Omega\setminus\Theta_{h_k}]\\
&&\to0 \mbox{\quad as $k\to\infty$.}
\end{eqnarray*}
\end{proof}
\medskip\medskip
\medskip

\begin{proof}[{\rm 2. } Proof of Lemma \ref{Lip-interpolation2} ] 
 For each $y=(y_1,y_2,y_3)\in\Omega_h^{\circ j}$, define the following functions:
\begin{eqnarray*}
&&f_1(x_1):[y_1,y_1+2h]\to\R,\quad \\
&&f_1(x_1):=u(y)+\frac{u(y+2he^1)-u(y)}{2h}(x_1-y_1);\\
&&f_2(x_1):[y_1,y_1+2h]\to\R,\quad \\
&&f_2(x_1):=u(y+2he^2)+\frac{u(y+2he^2+2he_1)-u(y+2he^2)}{2h}(x_1-y_1);\\
&&f_3(x_1,x_2):[y_1,y_1+2h]\times[y_2,y_2+2h]\to\R, \quad \\
&&f_3(x_1,x_2):= f_1(x_1)+\frac{f_2(x_1)-f_1(x_1)}{2h}(x_2-y_2);\\
&&g_1(x_1):[y_1,y_1+2h]\to\R,\quad \\
&&g_1(x_1):=u(y+2he^3)+\frac{u(y+2he^3+2he^1)-u(y+2he^3)}{2h}(x_1-y_1);\\
&&g_2(x_1):[y_1,y_1+2h]\to\R,\quad \\
&&g_2(x_1):=u(y+2he^3+2he^2)\\
&&\qquad\qquad+\frac{u(y+2he^3+2he^2+2he_1)-u(y+2he^3+2he^2)}{2h}(x_1-y_1);\\
&&g_3(x_1,x_2):[y_1,y_1+2h]\times[y_2,y_2+2h]\to\R, \quad \\
&&g_3(x_1,x_2):= g_1(x_1)+\frac{g_2(x_1)-g_1(x_1)}{2h}(x_2-y_2);\\ 
&&w(x_1,x_2,x_3):C_{2h}^+(y)\to\R,\\
&&w(x_1,x_2,x_3):=f_3(x_1,x_2)+\frac{g_3(x_1,x_2)-f_3(x_1,x_2)}{2h}(x_3-y_3)
\end{eqnarray*}
Then, we see that 
\begin{eqnarray*}
&&\!\!\!\!\!w(x_1,x_2,x_3)= u(y)+D_1u(y+he^1)(x_1-y_1)+ D_2u(y+he^2)(x_2-y_2)\\
&&\qquad+D_3u(y+he^3)(x_3-y_3)\\
&&\qquad+\{ D_1u(y+2he^2+he^1) -  D_1u(y+he^1) \}\frac{(x_1-y_1)(x_2-y_2)}{2h}\\
&&\qquad +\{ D_1u(y+2he^3+he^1)-D_1u(y+he^1)\} \frac{(x_1-y_1)(x_3-y_3)}{2h}\\
&&\qquad + \{D_2u(y+2he^3+he^2)-D_2u(y+he^2)\}\frac{(x_2-y_2)(x_3-y_3)}{2h}\\
&&\qquad +\{ D_1u(y+2he^2+2he^3+he^1)-D_1u(y+2he^3+he^1) \\
&&\qquad -D_1u(y+2he^2+he^1)+D_1u(y+he^1)\}\frac{(x_1-y_1)(x_2-y_2)(x_3-y_3)}{(2h)^2}. 
%
\end{eqnarray*}
It is clear that $w$ can be Lipschitz continuously connected with each other, yielding $w:\Theta_h^j\to\R$ that satisfies the inequalities.  
\end{proof}


\end{document}